\theoremstyle{definition}
\newtheorem{theorem}{Theorem}
\newtheorem*{theorem*}{Theorem}	
\newtheorem{lemma}[theorem]{Lemma}
\newtheorem{remark}[theorem]{Remark}
\newtheorem{definition}[theorem]{Definition}
\newtheorem{corollary}[theorem]{Corollary}
\theoremstyle{remark}
\begin{document}

\title[Spectrums of $C^{\ast }$-algebras of bounded functions]%
{Representation of spectrums of $C^{\ast }$-algebras of bounded
  functions in terms of filters}

\author[Alaste]{T. Alaste}

\address{Department of Mathematical Sciences\\
   University of Oulu\\
   PL 3000\\
   FI-90014 Oulun yliopisto\\
   Finland}

\email{tomi.alaste@gmail.com}

\keywords{$\mathcal{F}$-filter, $\mathcal{F}$-ultrafilter, spectrum}

\subjclass[2000]{54D80, 54D35, 54C35}

\maketitle

\begin{abstract}
Let $X$ be a non-empty set and let $\mathcal{F}$ be any $C^{\ast
}$-subalgebra of $\ell ^{\infty }(X)$ containing the constant functions.
We show that the spectrum of $\mathcal{F}$ can be considered as a
space of certain filters determined by $\mathcal{F}$ on $X$.
Furthermore, we show that these filters describe the topology of the
spectrum of $\mathcal{F}$.
\end{abstract}

\section{Introduction}
\label{sec:introduction}

It is well known that filters have a fundamental role in the study of
algebraic properties of the Stone-\v Cech compactification $\beta S$
of a discrete semigroup $S$.
For any non-empty set $X$, let $\ell ^{\infty }(X)$ denote the $C^{\ast
}$-algebra of all bounded, complex-valued functions on $X$.
The Stone-\v Cech compactification $\beta X$ of a discrete topological
space $X$ can be viewed as the spectrum of $\ell ^{\infty }(X)$.
On the other hand, $\beta X$ can also be viewed as the space of all
ultrafilters on $X$ (see \cite{Comfort} or \cite{HindStra}).
The latter consideration of $\beta S$ for a discrete semigroup $S$ is
the main tool in analyzing algebraic properties of $\beta S$ in
\cite{HindStra}.

Any topological compactification of a completely regular topological space $X$ or
any semigroup compactification of a Hausdorff semitopological semigroup
$S$ is determined by the spectrum of some $C^{\ast }$-subalgebra
$\mathcal{F}$ of $\ell ^{\infty }(X)$ containing the constant
functions.
The purpose of the paper is to show that the spectrum of 
\emph{any} $C^{\ast }$-subalgebra $\mathcal{F}$ of
$\ell ^{\infty }(X)$ containing the
constant functions, where $X$ is \emph{any} non-empty set,
can be considered as the space of all $\mathcal{F}$-ultrafilters on
$X$.
Independently of the $C^{\ast }$-algebra $\mathcal{F}$ in question,
our approach has a number of similarities with the consideration of
$\beta X$ of a discrete topological space $X$ as the space of all
ultrafilters on $X$.
For example, we obtain a bijective correspondence between non-empty,
closed subsets of the spectrum of $\mathcal{F}$ and
$\mathcal{F}$-filters on $X$.
Considering the importance of filters in the study of algebraic
properties of the Stone-\v Cech compactification $\beta S$ of a discrete
semigroup $S$ and the similarities between our approach,
we believe that the method
presented in the paper can serve as a valuable tool in the study of
semigroup compactifications and also of topological compactifications.
In fact, this method was already used in \cite{ufilters} to study the
smallest ideal and its closure in the
$\mathcal{LUC}$-compactification of a topological group.
Applications of our method to more general semigroup compactifications
will appear in our upcoming paper.

Although the Stone-\v Cech compactification $\beta X$ of a discrete
topological space $X$ is the most familiar compactification which may
be considered as a space of filters,
some other compactifications have also been studied in terms of filters.
As far as we are aware, the representation of spectrums of $C^{\ast
}$-algebras of bounded functions as spaces of
filters developed in the paper is the most general one and,
for many $C^{\ast }$-subalgebras of $\ell ^{\infty }(X)$,
the first one actually using filters.
If $X$ is a completely regular topological space,
then the Stone-\v Cech compactification $\beta X$ of $X$
can be considered as the space of all $z$-ultrafilters on $X$
(see \cite{GJ} or \cite{Walker}).
If $X$ is discrete, then our approach yields the usual
representation of $\beta X$ as the space of all ultrafilters on $X$,
but for non-discrete spaces our approach gives a new representation of
$\beta X$.
In \cite{near}, the uniform compactification of a uniform space $(X,\mathcal{U})$
is considered as the space of all near ultrafilters on $X$.
However, near ultrafilters need not be filters in the ordinary sense of
the word, since they need not be closed under
finite intersections.
An approach using filters is given in \cite{ufilters}.
The $\mathcal{WAP}$-compactification
of a \emph{discrete} semigroup is described in terms of filters in
\cite{wapfilt} and
any semigroup compactification of a Hausdorff, semitopological
semigroup $S$ is considered as the space of certain equivalence
classes of $z$-ultrafilters on $S$ in \cite{toot1}.

We establish the representation of the spectrum of a $C^{\ast
}$-subalgebra $\mathcal{F}$ of $\ell ^{\infty }(X)$ containing the
constant functions as the space of all $\mathcal{F}$-ultrafilters on
$X$ in Sections \ref{sec:topol-space-delta} and
\ref{sec:cont-funct-delta}.
We introduce these filters in Section \ref{sec:mathcalf-filters} and
describe some of their properties that we will use throughout the paper.
These properties show that the approach presented in the paper is a
natural extension of the consideration of $\beta X$ of a discrete
topological space $X$ as the space of all ultrafilters on $X$.
In Section \ref{sec:topol-space-delta},
we also show that
$\mathcal{F}$-filters describe the topology of the spectrum of
$\mathcal{F}$ in a similar way as filters describe the topology of
$\beta X$ for a discrete topological space $X$.
In Section \ref{sec:inclusions-etc}, we
describe some relationships between two $C^{\ast }$-subalgebras of
$\ell ^{\infty }(X)$ containing the constant functions.
In Section \ref{sec:mathc-filt-ideals}, we establish a correspondence between
$\mathcal{F}$-filters on $X$ and closed, proper ideals of $\mathcal{F}$.
In these sections, we assume no topological or algebraic structure on
the set $X$.
The last section is devoted to a study of $\mathcal{F}$-filters in
the case that $X$ is a Hausdorff, topological space and
every member of $\mathcal{F}$ is continuous.

In the paper, we present a self-contained and elementary construction
of the spectrum of a $C^{\ast }$-subalgebra $\mathcal{F}$ of
$\ell ^{\infty }(X)$ containing the constant functions.
Therefore, we wish to keep the prerequisites for the paper clear.
The required results are Urysohn's Lemma, Dini's Theorem, and the
fact that $\lvert f\rvert \in \mathcal{F}$ for every $f\in
\mathcal{F}$ (see \cite[pp. 89-90]{Alipr}).

\section{Preliminaries}
\label{sec:preliminaries}

Throughout the paper, let $X$ be a non-empty set and let
$\mathcal{F}$ be a $C^{\ast }$-subalgebra of $\ell ^{\infty }(X)$
containing the constant functions.
We introduce shortly some notation and remind the reader of
some definitions that we will use throughout the paper.

We denote the set of all positive integers by $\mathbb{N}$, that is,
$\mathbb{N}=\{1,2,3,\ldots \}$.
We denote the set of all subsets of $X$ by $\mathcal{P}(X)$ and we
define $\mathcal{P}^{\ast }(X)=\mathcal{P}(X)\setminus \{\emptyset \}$.
A \emph{filter} on $X$ is a non-empty subset $\varphi $ of
$\mathcal{P}(X)$ with the following properties:
\begin{enumerate}[(i)]
\item If $A,B\in \varphi $, then $A\cap B\in \varphi $.
\item If $A\in \varphi $ and $A\subseteq
  B\subseteq X$, then $B\in \varphi $.
\item $\emptyset \notin \varphi $.
\end{enumerate}

A \emph{filter base} on $X$ is a non-empty subset $\mathcal{B}$ of
$\mathcal{P}(X)$ such that $\emptyset \notin \mathcal{B}$ and,
for all sets $A,B\in \mathcal{B}$, there exists a set $C\in
\mathcal{B}$ such that $C\subseteq A\cap B$.
If $\mathcal{B}$ is a filter base on $X$,
then the filter $\varphi $ on $X$ \emph{generated} by $\mathcal{B}$ is
\[
\varphi =\{ A\subseteq X : \text{there exists some set } B\in
\mathcal{B} \text{ such that } B\subseteq A \}.
\]
Let $\varphi $ be a filter on $X$.
A subset $\mathcal{B}$ of $\mathcal{P}(X)$ is a \emph{filter base} for
$\varphi $ if and only if $\mathcal{B}\subseteq \varphi $ and, for
every set $A\in \varphi $, there exists a set $B\in \mathcal{B}$ such
that $B\subseteq A$.

A real-valued function $f$ on $X$ is \emph{positive}
if and only if $f(x)\geq 0$ for every $x\in X$.
If $f$ and $g$ are real-valued members of $\mathcal{F}$,
then the functions $(f\vee g)$ and $(f\wedge g)$ in $\mathcal{F}$ are
defined by
$(f\vee g)(x)=\max \{ f(x),g(x) \}$ and  $(f\wedge g)(x)=\min
\{f(x),g(x)\}$, respectively, for every $x\in X$.

Let $(X,\tau )$ be a (not necessarily Hausdorff) topological space.
For every subset $A$ of $X$, we denote
by $\text{int}_{(X,\tau )}(A)$ and $\text{cl}_{(X,\tau )}(A)$ the
interior and the closure of $A$ in $X$,
respectively, or simply by $\text{int}_X(A)$ and $\text{cl}_X(A)$ if
$\tau $ is understood.
We denote by $C(X)$ the $C^{\ast }$- subalgebra of $\ell ^{\infty }(X)$ consisting
of continuous members of $\ell ^{\infty }(X)$.
If $X$ is a locally compact Hausdorff topological space,
then the $C^{\ast }$-subalgebra $C_0(X)$ of $C(X)$ consists of those members of
$C(X)$ which vanish at infinity.

\section{$\mathcal{F}$-filters}
\label{sec:mathcalf-filters}

In this section, we introduce $\mathcal{F}$-filters and
$\mathcal{F}$-ultrafilters on $X$ and we study some of their basic
properties.
In the next two sections, we show that the spectrum of $\mathcal{F}$
can be considered as the space of all $\mathcal{F}$-ultrafilters on
$X$ and that $\mathcal{F}$-filters describe the topology of this
space.
For every $f\in \mathcal{F}$ and for every $r>0$, we define
\[
Z(f)= \{ x\in X : f(x)=0 \} \quad \text{and} \quad X(f,r)= \{ x\in X :
\lvert f(x)\rvert \leq r \}.
\]

\begin{definition}
\label{filters_Ffamily}
An \emph{$\mathcal{F}$-family} on $X$ is a non-empty subset
$\mathcal{A}$ of $\mathcal{P}^{\ast }(X)$ such that,
for every set $A\in \mathcal{A}$ with $A\neq X$, there exist a set
$B\in \mathcal{A}$ and a function $f\in \mathcal{F}$ such that $f(B)=\{0\}$ and
$f(X\setminus A)=\{1\}$.
An \emph{$\mathcal{F}$-filter} on $X$ is a filter $\varphi $ on $X$
which is also an $\mathcal{F}$-family on $X$.
\end{definition}

Of course, we may just as well assume that the function $f\in
\mathcal{F}$ in the previous definition satisfies
$f(B)=\{1\}$ and $f(X\setminus A)=\{0\}$.
Also, since $\lvert f\rvert \in \mathcal{F}$ for every $f\in \mathcal{F}$,
we may assume that $f$ is real-valued and, if necessary, that
$f(X)\subseteq [0,1]$.

There exists at least one $\mathcal{F}$-filter on $X$, namely the
filter $\varphi =\{X\}$.
If $\mathcal{F}$ contains only constant functions, then $\{X\}$
is the only $\mathcal{F}$-filter on $X$.
On the other hand, if $\mathcal{F}=\ell ^{\infty }(X)$,
then every filter $\varphi $ on $X$ is an $\mathcal{F}$-filter on
$X$.

Let $\varphi $ be an $\mathcal{F}$-filter on $X$ and let $A\in \varphi
$ with $A\neq X$.
Pick a set $B\in \varphi $ and a function $f\in \mathcal{F}$ with
$f(B)=\{0\}$ and $f(X\setminus A)=\{1\}$.
Then $B\subseteq Z(f)\subseteq A$.
Therefore, $\varphi $ has a filter base consisting of zero sets
(determined by $\mathcal{F}$) of $X$.
However, not every zero set of $X$ is in any $\mathcal{F}$-filter.
For example, let $\mathcal{F}=C(\mathbb{R})$.
Then $A=\{0\}$ is a zero set of $\mathbb{R}$ but there is no
$\mathcal{F}$-filter $\varphi $ on $\mathbb{R}$ such that $A\in \varphi $.

We shall apply the following remark frequently
without any further notice.

\begin{remark}
\label{filters_Ffamrem}
Let $\mathcal{A}$ be a non-empty subset of $\mathcal{P}^{\ast }(X)$.
Suppose that, for every set $A\in \mathcal{A}$ with $A\neq X$, there exist
a set $B\in \mathcal{A}$, real numbers $s$ and $r$ with $s<r$,
and a real-valued function $f\in \mathcal{F}$ such that $f(x)\leq s$ for every
$x\in B$ and $f(x)\geq r$ for every $x\in X\setminus A$.
Using the lattice operations $\vee $ and $\wedge $, we see that $\mathcal{A}$ is an
$\mathcal{F}$-family on $X$.
\end{remark}

Zorn's Lemma implies that every $\mathcal{F}$-filter on $X$ is
contained in some maximal (with respect to inclusion)
$\mathcal{F}$-filter on $X$.

\begin{definition}
An \emph{$\mathcal{F}$-ultrafilter} on $X$ is an $\mathcal{F}$-filter
on $X$ which is not properly contained in any other $\mathcal{F}$-filter on
$X$.
\end{definition}

Note that if $\mathcal{F}=\ell ^{\infty }(X)$, then a filter $\varphi $ on $X$ is
an $\mathcal{F}$-ultrafilter if and only if $\varphi $ is an
ultrafilter on $X$.
Also, the following simple fact about $\mathcal{F}$-ultrafilters is very
useful: If $p$ and $q$ are $\mathcal{F}$-ultrafilters on $X$, then
$p=q$ if and only if $p\subseteq q$.

\begin{definition}
Define
\[
\mathcal{F}_0 = \{f \in \mathcal{F} : X(f,r)\neq \emptyset \text{ for
  every } r>0 \}.
\]
For every non-empty subset $A$ of $X$, define
\[
\mathcal{Z}(A)=\{ f\in \mathcal{F} : f(x)=0 \text{ for every } x\in A \}.
\]
\end{definition}

The next lemma follows from Remark \ref{filters_Ffamrem}.

\begin{lemma}
\label{filters_fr-family}
If $\mathcal{F}'\subseteq \mathcal{F}_0$ is non-empty,
then $\mathcal{A}=\{ X(f,r) : f\in
\mathcal{F}', \, r>0 \}$ is an $\mathcal{F}$-family on $X$.
\end{lemma}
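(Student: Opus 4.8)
The plan is to verify the hypothesis of Remark~\ref{filters_Ffamrem} directly for the family $\mathcal{A}=\{X(f,r):f\in\mathcal{F}',\,r>0\}$. Two things need checking: that $\mathcal{A}$ is a non-empty subset of $\mathcal{P}^{\ast}(X)$, and that each of its members (other than $X$) can be separated from the complement by a real-valued function in $\mathcal{F}$ in the sense required by the Remark.

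First I would observe that $\mathcal{A}\neq\emptyset$ because $\mathcal{F}'$ is assumed non-empty, and that every set $X(f,r)$ with $f\in\mathcal{F}'\subseteq\mathcal{F}_0$ and $r>0$ is non-empty by the very definition of $\mathcal{F}_0$; hence $\mathcal{A}\subseteq\mathcal{P}^{\ast}(X)$. Next, fix a set $A=X(f,r)\in\mathcal{A}$ with $A\neq X$, where $f\in\mathcal{F}'$ and $r>0$. I would take $g=\lvert f\rvert\in\mathcal{F}$, which is real-valued, and set $B=X(f,r/2)\in\mathcal{A}$. Then for every $x\in B$ we have $g(x)=\lvert f(x)\rvert\leq r/2$, while for every $x\in X\setminus A=X\setminus X(f,r)$ we have $g(x)=\lvert f(x)\rvert>r$, so in particular $g(x)\geq r$. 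Thus with $s=r/2$ and the value $r$, we have $g(x)\leq s$ for $x\in B$ and $g(x)\geq r$ for $x\in X\setminus A$, and $s=r/2<r$. This is exactly the hypothesis of Remark~\ref{filters_Ffamrem}, so $\mathcal{A}$ is an $\mathcal{F}$-family on $X$.

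There is no serious obstacle here; the lemma is essentially a bookkeeping consequence of the definitions, and the only point requiring a moment's care is to choose the auxiliary set $B$ inside $\mathcal{A}$ itself (rather than an arbitrary subset), which is why $X(f,r/2)$ — again a member of $\mathcal{A}$ since $r/2>0$ — is the natural choice. One should also note that $g=\lvert f\rvert$ lies in $\mathcal{F}$, which is one of the standing facts recalled in the introduction, and that passing to $\lvert f\rvert$ does not change any of the sets $X(f,r)$, so working with $g$ in place of $f$ is harmless.
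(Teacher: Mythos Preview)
Your proof is correct and follows exactly the route the paper indicates: it simply says the lemma follows from Remark~\ref{filters_Ffamrem}, and you have filled in the verification (nonemptiness via $\mathcal{F}'\subseteq\mathcal{F}_0$, then $B=X(f,r/2)$ and $g=\lvert f\rvert$ to satisfy the hypothesis of the Remark). There is nothing to add.
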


We will use the following lemma and its corollaries a number of times
in the paper.
Recall that a non-empty subset $\mathcal{A}$ of $\mathcal{P}(X)$ has
the \emph{finite intersection property} if and only if $\bigcap _{k=1} ^n A_k\neq
\emptyset $ whenever $A_1,\ldots ,A_n\in \mathcal{A}$ for some $n\in
\mathbb{N}$.

\begin{lemma}
\label{filters_existence}
If $\mathcal{A}$ is an $\mathcal{F}$-family on $X$ such that $\mathcal{A}$
has the finite intersection property,
then there exists an $\mathcal{F}$-ultrafilter $p$ on $X$ such that
$\mathcal{A}\subseteq p$.
\end{lemma}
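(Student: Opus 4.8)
The plan is to build the desired $\mathcal{F}$-ultrafilter in two stages: first enlarge $\mathcal{A}$ to an actual $\mathcal{F}$-filter, then apply Zorn's Lemma to reach a maximal one. For the first stage, let $\mathcal{B}$ be the collection of all finite intersections of members of $\mathcal{A}$; since $\mathcal{A}$ has the finite intersection property, $\emptyset \notin \mathcal{B}$, and $\mathcal{B}$ is closed under finite intersections, so $\mathcal{B}$ is a filter base. Let $\varphi$ be the filter on $X$ generated by $\mathcal{B}$, i.e.\ $\varphi = \{ A \subseteq X : B \subseteq A \text{ for some } B \in \mathcal{B} \}$. I claim $\varphi$ is an $\mathcal{F}$-filter. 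It is already a filter by construction, so by Definition \ref{filters_Ffamily} I only need to check the $\mathcal{F}$-family condition: given $A \in \varphi$ with $A \neq X$, I must produce $C \in \varphi$ and $f \in \mathcal{F}$ with $f(C) = \{0\}$ and $f(X \setminus A) = \{1\}$.

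The main work — and the step I expect to be the crux — is verifying this separation property for $\varphi$ using only the separation property assumed for $\mathcal{A}$. Fix $A \in \varphi$ with $A \neq X$. By definition of $\varphi$ there are $A_1, \ldots, A_n \in \mathcal{A}$ with $A_1 \cap \cdots \cap A_n \subseteq A$. For each index $i$ with $A_i \neq X$, the $\mathcal{F}$-family property of $\mathcal{A}$ gives $B_i \in \mathcal{A}$ and a real-valued $f_i \in \mathcal{F}$ with $0 \le f_i \le 1$, $f_i(B_i) = \{0\}$, and $f_i(X \setminus A_i) = \{1\}$ (using the normalizations noted right after Definition \ref{filters_Ffamily}). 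For indices with $A_i = X$ we may simply discard them, since they contribute nothing to the intersection; if every $A_i = X$ then $A = X$, contrary to assumption, so at least one genuine index survives. Set $f = f_{i_1} \vee \cdots \vee f_{i_k} \in \mathcal{F}$ over the surviving indices, and $C = B_{i_1} \cap \cdots \cap B_{i_k}$. Then $f(C) = \{0\}$ since each $f_{i_j}$ vanishes on $B_{i_j} \supseteq C$, and for $x \in X \setminus A$ we have $x \notin A_{i_1} \cap \cdots \cap A_{i_k}$, hence $x \in X \setminus A_{i_j}$ for some $j$, so $f(x) \ge f_{i_j}(x) = 1$; since $f \le 1$ this gives $f(x) = 1$. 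It remains to see $C \in \varphi$: each $B_{i_j} \in \mathcal{A} \subseteq \varphi$, and $\varphi$ is closed under finite intersections, so $C \in \varphi$. (One should also confirm $C \neq \emptyset$, which follows because $C \in \varphi$ and $\emptyset \notin \varphi$.) Thus $\varphi$ is an $\mathcal{F}$-filter containing $\mathcal{A}$.

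For the second stage, recall from the remark preceding the definition of $\mathcal{F}$-ultrafilter that Zorn's Lemma applies: every $\mathcal{F}$-filter is contained in a maximal one. Applying this to $\varphi$, we obtain an $\mathcal{F}$-ultrafilter $p$ on $X$ with $\varphi \subseteq p$, and hence $\mathcal{A} \subseteq \varphi \subseteq p$, as required. The only subtlety worth a sentence is why the union of a chain of $\mathcal{F}$-filters is again an $\mathcal{F}$-filter — the filter axioms pass to unions of chains routinely, and the $\mathcal{F}$-family condition passes because for any $A$ in the union, $A$ already lies in some member of the chain, which supplies the requisite $B$ and $f$ — but this is exactly the content already invoked in the text before the definition of $\mathcal{F}$-ultrafilter, so it need not be re-proved here.
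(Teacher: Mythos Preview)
Your proof is correct and follows essentially the same route as the paper: generate the smallest filter $\varphi$ containing $\mathcal{A}$, verify the $\mathcal{F}$-family condition on $\varphi$ by combining the separating functions for the finitely many $A_k\in\mathcal{A}$ whose intersection sits inside a given $A\in\varphi$, and then invoke Zorn's Lemma. The only cosmetic difference is that the paper combines the $f_k$ by taking the sum $\sum f_k$ (yielding $f\ge 1$ on $X\setminus A$ and appealing to Remark~\ref{filters_Ffamrem}), whereas you take the pointwise maximum $\bigvee f_{i_j}$ (yielding $f=1$ there directly); both work equally well.
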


\begin{proof}
We sketch the proof briefly.
Let $\varphi $ be the smallest filter on $X$ containing $\mathcal{A}$.
Let $n\in \mathbb{N}$ and let $A_1,\ldots A_n\in \mathcal{A}$
with $A_k\neq X$ for every $k\in \{1,\ldots ,n\}$.
If $k\in \{1,\ldots ,n\}$, then there exist a set $B_k\in \mathcal{A}$ and
a positive function $f_k\in \mathcal{F}$ with $f_k(B_k)=\{0\}$
and $f_k(X\setminus A_k)=\{1\}$.
Put $B=\bigcap _{k=1}^n B_k$ and $f=\sum _{k=1}^n f_k$.
Since $B\in \varphi $, $f\in \mathcal{F}$, $f(B)=\{0\}$, and $f(x)\geq
1$ for every $x\in X\setminus \bigcap _{k=1}^nA_k$,
the filter $\varphi $ is an $\mathcal{F}$-family on $X$.
\end{proof}

The next two corollaries now follow from Lemma \ref{filters_fr-family}.

\begin{corollary}
\label{filters_extend}
Let $\varphi $ be an $\mathcal{F}$-filter on $X$ and let
$f\in \mathcal{F}$.
If $X(f,r)\cap B\neq \emptyset $ for all
$r>0$ and $B\in \varphi $,
then there exists an $\mathcal{F}$-ultrafilter $p$ on $X$ with
$\varphi \cup \{ X(f,r) : r>0 \} \subseteq p$.
\end{corollary}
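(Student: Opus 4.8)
The plan is to combine the two ingredients that are already available: Lemma~\ref{filters_fr-family}, which tells us that the collection $\mathcal{A}_0 = \{X(f,r) : r>0\}$ is an $\mathcal{F}$-family on $X$ (once we check $f\in \mathcal{F}_0$), and Lemma~\ref{filters_existence}, which produces an $\mathcal{F}$-ultrafilter from any $\mathcal{F}$-family with the finite intersection property. So the real content is to show that $\varphi \cup \mathcal{A}_0$ is an $\mathcal{F}$-family with the finite intersection property, and then invoke Lemma~\ref{filters_existence}.

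First I would verify that $f\in \mathcal{F}_0$: by hypothesis $X(f,r)\cap B\neq\emptyset$ for every $r>0$ and every $B\in\varphi$, and since $X\in\varphi$ we get $X(f,r)=X(f,r)\cap X\neq\emptyset$ for every $r>0$, which is exactly the condition defining $\mathcal{F}_0$. Hence by Lemma~\ref{filters_fr-family} the set $\mathcal{A}_0=\{X(f,r):r>0\}$ is an $\mathcal{F}$-family on $X$. Next I would check that $\mathcal{A}:=\varphi\cup\mathcal{A}_0$ is an $\mathcal{F}$-family on $X$. Given a set $A\in\mathcal{A}$ with $A\neq X$, either $A\in\varphi$, in which case the defining property of the $\mathcal{F}$-filter $\varphi$ supplies a suitable $B\in\varphi\subseteq\mathcal{A}$ and $f'\in\mathcal{F}$; or $A=X(f,r)$ for some $r>0$, in which case the defining property of the $\mathcal{F}$-family $\mathcal{A}_0$ supplies a suitable $B\in\mathcal{A}_0\subseteq\mathcal{A}$ and separating function. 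In either case the witness set lies in $\mathcal{A}$, so $\mathcal{A}$ is an $\mathcal{F}$-family.

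Then I would check the finite intersection property for $\mathcal{A}$. A finite subfamily of $\mathcal{A}$ consists of finitely many sets from $\varphi$ and finitely many sets $X(f,r_1),\dots,X(f,r_m)$; since $\varphi$ is a filter the first group has intersection some $B\in\varphi$, and since $X(f,r)\subseteq X(f,r')$ whenever $r\leq r'$ the second group has intersection $X(f,r)$ with $r=\min\{r_1,\dots,r_m\}>0$. The total intersection is $B\cap X(f,r)$, which is non-empty by the hypothesis of the corollary. Thus $\mathcal{A}$ has the finite intersection property, and Lemma~\ref{filters_existence} yields an $\mathcal{F}$-ultrafilter $p$ on $X$ with $\mathcal{A}\subseteq p$, i.e. $\varphi\cup\{X(f,r):r>0\}\subseteq p$, as required.

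I do not anticipate a serious obstacle here; the argument is essentially bookkeeping with the definitions, and the only mild subtlety is making sure that in the ``$\mathcal{F}$-family'' verification the witness $B$ genuinely lies in the \emph{combined} family $\mathcal{A}$ rather than in only one of the two pieces — but this is immediate since $\varphi\subseteq\mathcal{A}$ and $\mathcal{A}_0\subseteq\mathcal{A}$. The nestedness of the sets $X(f,r)$ in $r$ is what makes the finite intersection property reduce cleanly to the stated hypothesis.
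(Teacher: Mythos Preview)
Your proof is correct and follows exactly the route the paper intends: the paper simply states that this corollary follows from Lemma~\ref{filters_fr-family} (together with Lemma~\ref{filters_existence}), and you have spelled out precisely that argument---verifying $f\in\mathcal{F}_0$, checking that $\varphi\cup\{X(f,r):r>0\}$ is an $\mathcal{F}$-family with the finite intersection property, and invoking Lemma~\ref{filters_existence}.
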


\begin{corollary}
\label{filters_extend2}
Let $\varphi $ be an $\mathcal{F}$-filter on $X$ and let $A\subseteq X$.
If $A\cap B\neq \emptyset $ for every $B\in \varphi $,
then there exists an $\mathcal{F}$-ultrafilter $p$ on
$X$ containing the family $\varphi \cup \{ X(f,r) : f\in
\mathcal{Z}(A), \, r>0 \}$.
\end{corollary}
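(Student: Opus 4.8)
The plan is to reduce this statement to Corollary \ref{filters_extend} (or to Lemma \ref{filters_existence} directly) by checking that the relevant family is an $\mathcal{F}$-family with the finite intersection property. First I would set $\mathcal{A} = \varphi \cup \{ X(f,r) : f\in \mathcal{Z}(A), \, r>0 \}$ and observe that every $f\in \mathcal{Z}(A)$ belongs to $\mathcal{F}_0$: indeed, since $A\neq \emptyset$ and $f$ vanishes on $A$, we have $A\subseteq Z(f)\subseteq X(f,r)$ for every $r>0$, so $X(f,r)\neq\emptyset$. Hence by Lemma \ref{filters_fr-family} the subfamily $\{X(f,r):f\in\mathcal{Z}(A),\,r>0\}$ is an $\mathcal{F}$-family, and since $\varphi$ is an $\mathcal{F}$-filter, a short argument (or an appeal to Remark \ref{filters_Ffamrem} after combining the two families using $\vee$ and $\wedge$) shows that the union $\mathcal{A}$ is again an $\mathcal{F}$-family on $X$.

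Next I would verify the finite intersection property for $\mathcal{A}$. A finite subcollection of $\mathcal{A}$ consists of some members $B_1,\dots,B_m\in\varphi$ together with finitely many sets $X(f_1,r_1),\dots,X(f_n,r_n)$ with each $f_i\in\mathcal{Z}(A)$. Put $B=\bigcap_{j=1}^m B_j$, which lies in $\varphi$ since $\varphi$ is a filter; by hypothesis $A\cap B\neq\emptyset$, so pick $x\in A\cap B$. Because each $f_i$ vanishes on $A$, we have $x\in X(f_i,r_i)$ for every $i$, and $x\in B\subseteq B_j$ for every $j$; hence $x$ lies in the intersection of the chosen finite subcollection. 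Thus $\mathcal{A}$ has the finite intersection property.

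Having shown that $\mathcal{A}$ is an $\mathcal{F}$-family with the finite intersection property, Lemma \ref{filters_existence} yields an $\mathcal{F}$-ultrafilter $p$ on $X$ with $\mathcal{A}\subseteq p$, which is exactly the desired conclusion. (Alternatively, one can phrase the argument so that it simply invokes Corollary \ref{filters_extend} once one notes that, for each fixed $f\in\mathcal{Z}(A)$, the intersection hypothesis $A\cap B\neq\emptyset$ for all $B\in\varphi$ gives $X(f,r)\cap B\neq\emptyset$ for all $r>0$ and $B\in\varphi$; but since there may be more than one such $f$, it is cleanest to go through Lemma \ref{filters_existence} with the full family $\mathcal{A}$ at once.) The only mild obstacle is the bookkeeping in confirming that the union of $\varphi$ with the $X(f,r)$-family is itself an $\mathcal{F}$-family — but this is routine given Lemma \ref{filters_fr-family} and Remark \ref{filters_Ffamrem}, since the witnessing functions for the two parts can be combined using the lattice operations.
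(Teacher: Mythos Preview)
Your argument is correct and is exactly the (unwritten) proof the paper intends: show the union is an $\mathcal{F}$-family, check the finite intersection property using a point of $A\cap B$, and invoke Lemma~\ref{filters_existence}. The only remark is that your ``mild obstacle'' is no obstacle at all---the union of two $\mathcal{F}$-families is immediately an $\mathcal{F}$-family straight from Definition~\ref{filters_Ffamily}, since the witnessing set $B$ for a given $A$ may be taken from whichever of the two families $A$ came from; no lattice operations are needed.
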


If $\mathcal{F}=\ell ^{\infty }(X)$, then we may take the functions in the next
theorem to be characteristic functions of subsets of $X$.
Except for statement (\ref{filters_ultrafilter_ii}), the conclusion of the next theorem is the
same as in \cite[Theorem 3.6]{HindStra}.

\begin{theorem}
\label{filters_ultrafilter}
If $\varphi \subseteq \mathcal{P}(X)$, then the following statements
are equivalent:
\begin{enumerate}[\upshape (i)]
\item\label{filters_ultrafilter_i}
$\varphi $ is an $\mathcal{F}$-ultrafilter on $X$.
\item\label{filters_ultrafilter_ii}
$\varphi $ is an $\mathcal{F}$-filter on $X$ and, if
  $X(f,r)\notin \varphi $ for some $f\in \mathcal{F}$ and $r>0$,
  then, for every real number $t$ with $0<t<r$, there exists a set $A\in
  \varphi $ with $X(f,t)\cap A=\emptyset $.
\item\label{filters_ultrafilter_iii}
$\varphi $ is a maximal
  subset of $\mathcal{P}(X)$ such that $\varphi $ is an
  $\mathcal{F}$-family on $X$ and $\varphi $ has the finite
  intersection property.
\item\label{filters_ultrafilter_iv}
$\varphi $ is an $\mathcal{F}$-filter on $X$ and, if
  $\bigcup _{k=1}^n A_k\in \varphi $ for some $n\in
  \mathbb{N}$ and for some subsets $A_1,\ldots ,A_n$ of $X$,
  then there exists $k\in \{1,\ldots ,n\}$ such that
  $X(f_k,r)\in \varphi $ for all $f\in \mathcal{Z}(A_k)$ and $r>0$.
\item\label{filters_ultrafilter_v}
$\varphi $ is an $\mathcal{F}$-filter on $X$ and, for every
  non-empty subset $A$ of $X$ with $A\neq X$,
  either $X(f,r)\in \varphi $ for all
  $f\in \mathcal{Z}(A)$ and $r>0$, or $X(g,r)\in \varphi $
  for all $g\in \mathcal{Z}(X\setminus A)$ and $r>0$.
\end{enumerate}
\end{theorem}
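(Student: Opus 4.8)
The plan is to prove the chain $(\ref{filters_ultrafilter_i})\Rightarrow(\ref{filters_ultrafilter_ii})\Rightarrow(\ref{filters_ultrafilter_iv})\Rightarrow(\ref{filters_ultrafilter_v})\Rightarrow(\ref{filters_ultrafilter_i})$ and to treat $(\ref{filters_ultrafilter_iii})$ separately via $(\ref{filters_ultrafilter_i})\Leftrightarrow(\ref{filters_ultrafilter_iii})$. The latter equivalence falls straight out of Lemma~\ref{filters_existence}: an $\mathcal{F}$-ultrafilter is an $\mathcal{F}$-family with the finite intersection property (being a filter), and if it were properly contained in another such family $\mathcal{A}$, then Lemma~\ref{filters_existence} would embed $\mathcal{A}$ in an $\mathcal{F}$-ultrafilter $p$, forcing $p=\varphi$ and hence $\mathcal{A}=\varphi$; conversely a maximal $\mathcal{F}$-family with the finite intersection property coincides with the $\mathcal{F}$-ultrafilter Lemma~\ref{filters_existence} builds around it. The recurring technical device will be ``level shifting'': for $f\in\mathcal{F}$ with $f\geq 0$ and a constant $c>0$, the functions $(f-c)\vee 0$ and $c-(f\wedge c)$ lie in $\mathcal{F}$; the first has sublevel sets exactly the $X(f,c+s)$ with $s>0$, while the second vanishes precisely where $f\geq c$ and equals $c$ on $Z(f)$. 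These are what convert a statement about one sublevel set of a function into a statement about another.

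For $(\ref{filters_ultrafilter_i})\Rightarrow(\ref{filters_ultrafilter_ii})$: let $\varphi$ be an $\mathcal{F}$-ultrafilter, $X(f,r)\notin\varphi$, and $0<t<r$, and suppose for contradiction that $X(f,t)$ meets every member of $\varphi$. With $h=(\lvert f\rvert-t)\vee 0\in\mathcal{F}$ we have $X(h,s)=X(f,t+s)\supseteq X(f,t)$ for all $s>0$, so $X(h,s)$ meets every member of $\varphi$; Corollary~\ref{filters_extend} then yields an $\mathcal{F}$-ultrafilter $p\supseteq\varphi\cup\{X(h,s):s>0\}$, whence $p=\varphi$ and $X(f,r)=X(h,r-t)\in\varphi$, a contradiction. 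For $(\ref{filters_ultrafilter_ii})\Rightarrow(\ref{filters_ultrafilter_iv})$: assume $\bigcup_{k=1}^nA_k\in\varphi$ and, discarding empty sets, that each $A_k$ is non-empty; if no $A_k$ has the property in $(\ref{filters_ultrafilter_iv})$, pick for each $k$ a function $f_k\in\mathcal{Z}(A_k)$ and $r_k>0$ with $X(f_k,r_k)\notin\varphi$, and use $(\ref{filters_ultrafilter_ii})$ to get $C_k\in\varphi$ disjoint from $X(f_k,r_k/2)$. Since $A_k\subseteq Z(f_k)\subseteq X(f_k,r_k/2)$ we obtain $A_k\cap C_k=\emptyset$, so $\bigcap_{k=1}^nC_k$ and $\bigcup_{k=1}^nA_k$ would be disjoint members of $\varphi$, which is impossible. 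Finally $(\ref{filters_ultrafilter_iv})\Rightarrow(\ref{filters_ultrafilter_v})$ is simply $(\ref{filters_ultrafilter_iv})$ applied to the decomposition $X=A\cup(X\setminus A)$.

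The substantive step is $(\ref{filters_ultrafilter_v})\Rightarrow(\ref{filters_ultrafilter_i})$. Assume $\varphi$ satisfies $(\ref{filters_ultrafilter_v})$ and suppose, towards a contradiction, that $\varphi$ is properly contained in some $\mathcal{F}$-filter $\psi$. Pick $A\in\psi\setminus\varphi$ with $A\neq X$; since $\psi$ is an $\mathcal{F}$-family, pick $B\in\psi$ and a real-valued $g\in\mathcal{F}$ with $0\leq g\leq 1$, $g(B)=\{0\}$, and $g(X\setminus A)=\{1\}$. For each $t$ with $0<t<1$ we have $B\subseteq X(g,t)\subseteq A$, so $X(g,t)\notin\varphi$ (otherwise $A\in\varphi$). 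Now apply $(\ref{filters_ultrafilter_v})$ not to the zero set $Z(g)$ but to the ``collar'' $A_0=X(g,1/2)$, which is a non-empty proper subset of $X$. In the first alternative of $(\ref{filters_ultrafilter_v})$, the function $(g-1/2)\vee 0$ lies in $\mathcal{Z}(A_0)$ and forces $X(g,3/4)\in\varphi$, contradicting the previous sentence. In the second alternative, the function $1/2-(g\wedge 1/2)$ lies in $\mathcal{Z}(X\setminus A_0)$ and forces $\{x\in X:g(x)\geq 1/4\}\in\varphi\subseteq\psi$; but $g$ vanishes on $B\in\psi$, so this set is disjoint from $B$, again a contradiction. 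Hence no such $\psi$ exists, i.e.\ $\varphi$ is an $\mathcal{F}$-ultrafilter.

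I expect $(\ref{filters_ultrafilter_v})\Rightarrow(\ref{filters_ultrafilter_i})$ to be the only step requiring an idea rather than bookkeeping. Feeding $(\ref{filters_ultrafilter_v})$ the zero set $Z(g)$ directly does not work: its first alternative is still contradicted, but its second alternative involves $\mathcal{Z}(X\setminus Z(g))$, whose members must vanish on \emph{all} of $\{g\neq 0\}$, hence cannot stay bounded away from $0$ on $B$, and no contradiction ensues. Passing to the collar $A_0=X(g,1/2)$ makes $\mathcal{Z}(X\setminus A_0)$ large enough to contain a function of $\mathcal{F}$ separating $B$ from $X\setminus A_0$, which is exactly the room needed. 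The remaining implications are routine manipulations with the lattice operations $\vee,\wedge$, Lemma~\ref{filters_existence}, and Corollary~\ref{filters_extend}; the small normalizations (discarding empty $A_k$, replacing $f$ by $\lvert f\rvert$, rescaling $g$ into $[0,1]$) I would not dwell on.
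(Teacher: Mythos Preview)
Your proof is correct. The overall organisation differs slightly from the paper's: the paper proves the single cycle $(\ref{filters_ultrafilter_i})\Rightarrow(\ref{filters_ultrafilter_ii})\Rightarrow(\ref{filters_ultrafilter_iii})\Rightarrow(\ref{filters_ultrafilter_iv})\Rightarrow(\ref{filters_ultrafilter_v})\Rightarrow(\ref{filters_ultrafilter_i})$, so that in $(\ref{filters_ultrafilter_iii})\Rightarrow(\ref{filters_ultrafilter_iv})$ it must first verify that a maximal $\mathcal{F}$-family with the finite intersection property is actually a filter. Your route, handling $(\ref{filters_ultrafilter_i})\Leftrightarrow(\ref{filters_ultrafilter_iii})$ separately via Lemma~\ref{filters_existence} and then proving $(\ref{filters_ultrafilter_ii})\Rightarrow(\ref{filters_ultrafilter_iv})$ directly, avoids this detour since $(\ref{filters_ultrafilter_ii})$ already hands you an $\mathcal{F}$-filter.

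The one place the arguments genuinely differ is $(\ref{filters_ultrafilter_v})\Rightarrow(\ref{filters_ultrafilter_i})$. You apply $(\ref{filters_ultrafilter_v})$ to the collar $A_0=X(g,1/2)$ and manufacture the required members of $\mathcal{Z}(A_0)$ and $\mathcal{Z}(X\setminus A_0)$ by level-shifting the single function $g$. The paper instead applies $(\ref{filters_ultrafilter_v})$ to the set $B$ itself: since $f\in\mathcal{Z}(B)$ and $X(f,1/2)\notin\varphi$, the first alternative fails; for the second alternative it invokes the $\mathcal{F}$-family property of $\psi$ a second time, at $B$, to produce $C\in\psi$ and $g\in\mathcal{F}$ with $g(C)=\{1\}$ and $g(X\setminus B)=\{0\}$, so that $g\in\mathcal{Z}(X\setminus B)$ yet $X(g,1/2)\cap C=\emptyset$. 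Both tricks are short; the paper's avoids the explicit collar construction at the price of one extra appeal to the $\mathcal{F}$-family axiom, while yours stays with a single witness function throughout.
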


\begin{proof}
(i) $\Rightarrow $ (ii)
This follows from Corollary \ref{filters_extend2} with $g=(\lvert
f\rvert -t)\vee 0$.
Note here that $g\in \mathcal{Z}(X(f,t))$ and $X(g,r-t)\subseteq X(f,r)$.

(ii) $\Rightarrow $ (iii)
This follows from the definition of an $\mathcal{F}$-family.

(iii) $\Rightarrow $ (iv)
Suppose that (\ref{filters_ultrafilter_iii}) holds.
Clearly, $X\in \varphi $, $\emptyset \notin \varphi $, and $B\in
\varphi $ whenever $A\in \varphi $ and $A\subseteq B\subseteq X$.
To see that $\varphi $ is a filter,
let $A,B\in \varphi $.
Pick sets $C,D\in \varphi $ and functions $f,g\in
\mathcal{F}$ such that $f(C)=f(D)=\{0\}$ and $f(X\setminus
A)=g(X\setminus B)=\{1\}$.
Since $C\cap D\subseteq X(\lvert f\rvert
+\lvert g\rvert ,r)$ for every $r>0$,
we have $X(\lvert f\rvert +\lvert g\rvert ,r)\in \varphi $ for every
$r>0$ by Lemma \ref{filters_fr-family}.
Since $X(\lvert f\rvert +\lvert g\rvert ,1/2)\subseteq A\cap B$,
we have $A\cap B\in \varphi $, as required.

Suppose now that $\bigcup _{k=1}^n A_k\in \varphi $ for some $n\in
\mathbb{N}$ and for some subsets $A_1,\ldots ,A_n$ of $X$.
Suppose also that, for every $k\in \{1,\ldots ,n\}$, there exist
$r_k>0$ and $f_k\in \mathcal{Z}(A_k)$
such that $X(f_k,r_k)\notin \varphi $.
If $k\in \{1,\ldots ,n\}$,
then $\mathcal{A} =\varphi \cup \{X(f_k,t) : t>0\}$ is an
$\mathcal{F}$-family on $X$ by Lemma \ref{filters_fr-family}.
By assumption,
there exist a set $B_k\in \varphi
$ and $t_k>0$ such that $B_k\cap X(f_k,t_k)=\emptyset $.
Put $B=\bigcap _{k=1}^n B_k\in \varphi $.
Then $B\cap [\bigcup _{k=1}^n Z(f_k)]=\emptyset $, a contradiction.

(iv) $\Rightarrow $ (v)
This is obvious.

(v) $\Rightarrow $ (i)
Suppose that (\ref{filters_ultrafilter_v}) holds.
Suppose also that there exists an $\mathcal{F}$-filter $\psi $ on $X$
which properly contains $\varphi $.
Pick some set $A\in \psi \setminus \varphi $.
Pick a set $B\in \psi $ and a function $f\in \mathcal{F}$ such that
$f(B)=\{0\}$ and $f(X\setminus A)=\{1\}$.
Pick a set $C\in \psi $ and a function $g\in \mathcal{F}$ such
that $g(C)=\{1\}$ and $g(X\setminus B)=\{0\}$.
Since $X(f,1/2)\subseteq A$, we have $X(f,1/2)\notin \varphi $.
Since $f\in \mathcal{Z}(B)$, we have $X(g,1/2)\in \varphi $.
But now $X(g,1/2)\cap C=\emptyset $, a contradiction.
\end{proof}

\section{The topological space $\delta X$}
\label{sec:topol-space-delta}

In this section, we define a topology on the set of all
$\mathcal{F}$-ultrafilters on $X$ and establish some of the
properties of the resulting space.
In particular, we show that the resulting space is a compact Hausdorff
space and that $\mathcal{F}$-filters describe the topology of this
space.

\begin{definition}
Define $\delta X=\{ p : p \text{ is an $\mathcal{F}$-ultrafilter on } X\}$.
For every subset $A$ of $X$, define $\widehat{A}=\{ p\in \delta X :
A\in p\}$.
For every $\mathcal{F}$-filter $\varphi $ on $X$, define
$\widehat{\varphi }=\{ p\in \delta X : \varphi \subseteq p \}$.
\end{definition}

To be precise, we should include the $C^{\ast }$-algebra $\mathcal{F}$ in
the notation above, such as $\delta _{\mathcal{F}}(X)$.
Except in Section \ref{sec:inclusions-etc}, we use only one $C^{\ast
}$-algebra $\mathcal{F}$ in the same context, so the notation chosen
above should not cause any misunderstandings.

\begin{theorem}
\label{filters_filtprop}
If $\varphi $ and $\psi $ are $\mathcal{F} $-filters on $X$,
then the following statements hold:
\begin{enumerate}[\upshape (i)]
\item\label{filters_filtprop_i}
$\widehat{\varphi }=\bigcap _{A\in \varphi }\widehat{A}$.
\item\label{filters_filtprop_ii}
$\varphi =\bigcap _{p \in \widehat{\varphi }} p $.
\item\label{filters_filtprop_iii}
$\varphi \subseteq \psi $ if and only if $\widehat{\psi
  }\subseteq \widehat{\varphi }$.
\item\label{filters_filtprop_iv}
$\varphi =\psi $ if and only if $\widehat{\varphi }=\widehat{\psi
  }$.
\end{enumerate}
\end{theorem}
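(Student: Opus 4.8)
The plan is to prove the four statements in the natural order, deriving each from the previous ones where possible, and reducing everything to the structure theory already established in Theorem \ref{filters_ultrafilter} and its corollaries.

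For (\ref{filters_filtprop_i}): the inclusion $\widehat{\varphi}\subseteq\bigcap_{A\in\varphi}\widehat{A}$ is immediate, since $\varphi\subseteq p$ forces $A\in p$ for each $A\in\varphi$. For the reverse inclusion, if $p\in\widehat{A}$ for every $A\in\varphi$, then $\varphi\subseteq p$ by definition of $\widehat{A}$, so $p\in\widehat{\varphi}$. This one is essentially a tautology once the definitions are unwound.

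For (\ref{filters_filtprop_ii}): the inclusion $\varphi\subseteq\bigcap_{p\in\widehat{\varphi}}p$ holds because every $p\in\widehat{\varphi}$ contains $\varphi$ by definition; note that $\widehat{\varphi}\neq\emptyset$ because $\varphi$ extends to an $\mathcal{F}$-ultrafilter by Lemma \ref{filters_existence} (an $\mathcal{F}$-filter, being a filter, has the finite intersection property), so this intersection is not vacuous. The reverse inclusion is the heart of the proof: given $A\subseteq X$ with $A\notin\varphi$, I must produce an $\mathcal{F}$-ultrafilter $p\supseteq\varphi$ with $A\notin p$. The idea is to show that $X\setminus A$ meets every member of $\varphi$, so that Corollary \ref{filters_extend2} applies and yields an $\mathcal{F}$-ultrafilter $p$ containing $\varphi\cup\{X(f,r):f\in\mathcal{Z}(X\setminus A),\,r>0\}$; I then argue $A\notin p$. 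To see that $X\setminus A$ meets every $B\in\varphi$: if some $B\in\varphi$ had $B\subseteq A$, then $A\in\varphi$, contradiction. To see that $A\notin p$: since $\varphi$ is an $\mathcal{F}$-filter and $X\setminus A\notin\varphi$ — indeed if $X\setminus A\in\varphi$ then $\emptyset=A\cap(X\setminus A)$ would give a problem only if also $A\in p$, so I instead argue directly. If $A\in p$, then since $p$ also contains $X(f,r)$ for every $f\in\mathcal{Z}(X\setminus A)$ and every $r>0$, applying part (\ref{filters_ultrafilter_v}) or (\ref{filters_ultrafilter_ii}) of Theorem \ref{filters_ultrafilter} to the set $A$ (using that $X\setminus A$ already gives all the $X(f,r)$) forces, if $A\neq X$, a witnessing function separating $A$ from its complement and a consequent disjointness that contradicts $A\in p$ together with $X(f,r)\in p$. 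The delicate point is handling the case $A=X$ separately (where $A\in p$ trivially but then $X\in\varphi$ as well, so this case does not arise when $A\notin\varphi$), and making the separation argument precise; I expect this to be the main obstacle and will model it closely on the proof of (v) $\Rightarrow$ (i) in Theorem \ref{filters_ultrafilter}.

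For (\ref{filters_filtprop_iii}): if $\varphi\subseteq\psi$ and $p\in\widehat{\psi}$, then $\varphi\subseteq\psi\subseteq p$, so $p\in\widehat{\varphi}$. Conversely, if $\widehat{\psi}\subseteq\widehat{\varphi}$, then using (\ref{filters_filtprop_ii}) twice we get $\psi=\bigcap_{p\in\widehat{\psi}}p\supseteq\bigcap_{p\in\widehat{\varphi}}p=\varphi$. Finally, (\ref{filters_filtprop_iv}) is immediate from (\ref{filters_filtprop_iii}) by antisymmetry: $\varphi=\psi$ iff $\varphi\subseteq\psi$ and $\psi\subseteq\varphi$ iff $\widehat{\psi}\subseteq\widehat{\varphi}$ and $\widehat{\varphi}\subseteq\widehat{\psi}$ iff $\widehat{\varphi}=\widehat{\psi}$.
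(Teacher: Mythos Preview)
Your proposal is correct and follows the paper's approach almost exactly: (i) is a tautology, (ii) uses Corollary~\ref{filters_extend2} applied to $X\setminus A$ to produce $p\in\widehat{\varphi}$ with $A\notin p$, and (iii)--(iv) follow formally from (ii). One small clarification on the contradiction step in (ii): you do not need to invoke Theorem~\ref{filters_ultrafilter}~(\ref{filters_ultrafilter_ii}) or (\ref{filters_ultrafilter_v}) --- it is cleaner (and is what the paper does) to use the $\mathcal{F}$-family property of $p$ directly: if $A\in p$ (with $A\neq X$), pick $B\in p$ and $f\in\mathcal{F}$ with $f(B)=\{1\}$ and $f(X\setminus A)=\{0\}$; then $f\in\mathcal{Z}(X\setminus A)$ gives $X(f,1/2)\in p$, yet $B\cap X(f,1/2)=\emptyset$, which is the contradiction you want.
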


\begin{proof}
(i) This is obvious.

(ii) The inclusion $\varphi \subseteq \bigcap _{p\in \widehat{\varphi
  }} p$ is obvious, and so we need only to verify the reverse
inclusion.
Suppose that $A\subseteq X$ satisfies $A\notin \varphi $.
By Corollary \ref{filters_extend2},
there exists an element $p\in \widehat{\varphi }$ such that
$\{ X(f,r) :f\in \mathcal{Z}(X\setminus A), \, r>0 \}\subseteq p$.
Now, it is enough to show that $A\notin p$.
Suppose that $A\in p$.
Pick a set $B\in p$ and a function $f\in \mathcal{F}$ such that
$f(B)=\{1\}$ and $f(X\setminus A)=\{0\}$.
Since $f\in \mathcal{Z}(X\setminus A)$, we have $X(f,1/2)\in p$.
But now $B\cap X(f,1/2)=\emptyset $, a contradiction.

(iii) Necessity is obvious and sufficiency follows from statement
(\ref{filters_filtprop_ii}).

(iv) This follows from statement (\ref{filters_filtprop_iii}).
\end{proof}

The family $\{ \widehat{A} : A\subseteq X \}$ is a base for a topology
on $\delta X$.
We define the topology of $\delta X$ to be the topology which has the
family $\{ \widehat{A} : A\subseteq X \}$ as its base.
In particular, $\{ \widehat{A} : A\in p \}$ is the neighborhood base
of a point $p\in \delta X$.
If $Y\subseteq \delta X$, then we denote $\text{cl}_{\delta X}(Y)$
by $\overline{Y}$ with one exception:
If $A\subseteq X$, then we use $\text{cl}_{\delta X}(\widehat{A})$
instead of the cumbersome notation $\overline{\widehat{A}}$.

We denote by $\tau (\mathcal{F})$ the weakest topology $\tau $ on $X$
such that every function $f$ in $\mathcal{F}$ is continuous with
respect to $\tau $.
For every subset $A$ of $X$, we denote $\text{int}_{(X,\tau
  (\mathcal{F}))}(A)$ by $A^{\circ }$.
For every element $x\in X$, we denote by
$\mathcal{N}_{\mathcal{F}}(x)$ the neighborhood filter of $x$ in $(X,\tau (\mathcal{F}))$.

We shall apply the following remark frequently without any further
notice.

\begin{remark}
\label{filters_interior}
Let $\varphi $ be an $\mathcal{F}$-filter on $X$ and let $A\in
\varphi $ with $A\neq X$.
Pick $B\in \varphi $ and $f\in \mathcal{F}$ such that
$f(B)=\{0\}$ and $f(X\setminus A)=\{1\}$.
Then $B\subseteq \{ x\in X : \lvert f(x)\rvert <1\} \subseteq A$.
Therefore, if $C$ is any subset of $X$, then $C\in \varphi $ if and only
if $C^{\circ }\in \varphi $.
\end{remark}

\begin{theorem}
\label{filters_Nxbase}
If $x\in X$, then the family
\[
\mathcal{A}_x=\{ X(f,r) : f\in \mathcal{F}, \, f(x)=0,
\text{ and } r>0 \}
\]
is a filter base on $X$.
The filter on $X$ generated by $\mathcal{A}_x$ is the filter
$\mathcal{N}_{\mathcal{F}}(x)$ and it is an
$\mathcal{F}$-ultrafilter.
\end{theorem}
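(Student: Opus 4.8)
The plan is to prove the three assertions in the order they are stated: first that $\mathcal{A}_x$ is a filter base, then that the filter it generates is $\mathcal{N}_{\mathcal{F}}(x)$, and finally that this filter is an $\mathcal{F}$-ultrafilter. For the filter base claim, I would note that $\emptyset \notin \mathcal{A}_x$ since $x \in X(f,r)$ whenever $f(x) = 0$; and given $X(f,r)$ and $X(g,s)$ in $\mathcal{A}_x$, the function $h = \lvert f\rvert \vee \lvert g\rvert$ lies in $\mathcal{F}$, satisfies $h(x) = 0$, and $X(h, \min\{r,s\}) \subseteq X(f,r) \cap X(g,s)$, so $\mathcal{A}_x$ is directed downward.

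For the second claim, recall that $\tau(\mathcal{F})$ is the weakest topology making every $f \in \mathcal{F}$ continuous, so a subbase of neighborhoods of $x$ is given by sets of the form $f^{-1}(U)$ with $U$ open in $\mathbb{C}$ and $f(x) \in U$. I would show that each $X(f,r)$ with $f(x) = 0$ is a $\tau(\mathcal{F})$-neighborhood of $x$ — indeed it contains $\{y : \lvert f(y)\rvert < r\}$, which is open and contains $x$ — hence the filter generated by $\mathcal{A}_x$ is contained in $\mathcal{N}_{\mathcal{F}}(x)$. Conversely, given any $\tau(\mathcal{F})$-neighborhood $V$ of $x$, I can shrink it to a basic neighborhood $\bigcap_{k=1}^n \{y : \lvert f_k(y) - f_k(x)\rvert < \varepsilon\}$; replacing each $f_k$ by $g_k = \lvert f_k - f_k(x)\rvert \in \mathcal{F}$ (which has $g_k(x) = 0$) and setting $g = g_1 \vee \cdots \vee g_n \in \mathcal{F}$, one gets $g(x) = 0$ and $X(g, \varepsilon/2) \subseteq \bigcap_k \{y : \lvert f_k(y) - f_k(x)\rvert \le \varepsilon/2\} \subseteq V$, so $V$ belongs to the generated filter. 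Thus the generated filter equals $\mathcal{N}_{\mathcal{F}}(x)$. That this filter is an $\mathcal{F}$-filter is immediate from Lemma \ref{filters_fr-family}, since $\mathcal{F}' = \{f \in \mathcal{F} : f(x) = 0\}$ is a non-empty subset of $\mathcal{F}_0$ (each such $f$ has $x \in X(f,r)$ for all $r > 0$) and $\mathcal{A}_x$ is exactly the $\mathcal{F}$-family it produces, which is a filter base for $\mathcal{N}_{\mathcal{F}}(x)$; an $\mathcal{F}$-family generating a filter makes that filter an $\mathcal{F}$-family, hence an $\mathcal{F}$-filter.

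The main work is the ultrafilter claim. I would use the characterization in Theorem \ref{filters_ultrafilter}, say criterion (\ref{filters_ultrafilter_ii}): suppose $X(f,r) \notin \mathcal{N}_{\mathcal{F}}(x)$ for some $f \in \mathcal{F}$, $r > 0$, and fix $t$ with $0 < t < r$; I must produce $A \in \mathcal{N}_{\mathcal{F}}(x)$ with $X(f,t) \cap A = \emptyset$. The condition $X(f,r) \notin \mathcal{N}_{\mathcal{F}}(x)$ forces $\lvert f(x)\rvert > r$ — for if $\lvert f(x)\rvert \le r$ then, since $\lvert f\rvert$ is continuous, $X(f,r) \supseteq \{y : \lvert f(y)\rvert \le (\lvert f(x)\rvert + r)/2\}$ would be... actually more simply, $X(f,r)$ is a $\tau(\mathcal{F})$-closed neighborhood question needs care, so instead: if $\lvert f(x)\rvert \le t$, then $X(f,t)$ itself, containing $\{y : \lvert f(y)\rvert < t\} \ni x$ when $\lvert f(x)\rvert < t$, is a neighborhood — the boundary case $\lvert f(x)\rvert = t$ can be absorbed by working with $\lvert f(x)\rvert < r$ throughout. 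The clean dichotomy is: if $\lvert f(x)\rvert < r$ then $X(f,r) \in \mathcal{N}_{\mathcal{F}}(x)$, contradiction; so $\lvert f(x)\rvert \ge r > t$. Then set $A = \{y : \lvert f(y)\rvert > t\}$, which is $\tau(\mathcal{F})$-open (complement of $X(f,t)$, and $\lvert f\rvert$ is continuous) and contains $x$, hence $A \in \mathcal{N}_{\mathcal{F}}(x)$, and by construction $A \cap X(f,t) = \emptyset$. This verifies (\ref{filters_ultrafilter_ii}), so $\mathcal{N}_{\mathcal{F}}(x)$ is an $\mathcal{F}$-ultrafilter. The only delicate point is handling the boundary value $\lvert f(x)\rvert = t$ correctly, which is why I route the argument through the strict inequality version of the neighborhood membership established in the second paragraph; everything else is routine manipulation with the lattice operations $\vee$, $\wedge$ and with $\lvert \cdot \rvert \in \mathcal{F}$.
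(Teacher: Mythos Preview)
Your proof is correct and follows the paper's overall plan: establish the filter-base property by a lattice/sum trick, identify the generated filter with $\mathcal{N}_{\mathcal{F}}(x)$, invoke Lemma~\ref{filters_fr-family} to conclude it is an $\mathcal{F}$-filter, and then appeal to Theorem~\ref{filters_ultrafilter} for maximality. The one substantive difference is which clause of Theorem~\ref{filters_ultrafilter} you verify: you use criterion~(\ref{filters_ultrafilter_ii}), whereas the paper uses criterion~(\ref{filters_ultrafilter_iv}). The paper's route is a one-liner with no case analysis: if $\bigcup_{k=1}^{n} A_k \in \mathcal{N}_{\mathcal{F}}(x)$ then $x$ itself lies in some $A_k$, so every $f \in \mathcal{Z}(A_k)$ has $f(x)=0$, whence $X(f,r)\in \mathcal{A}_x\subseteq \mathcal{N}_{\mathcal{F}}(x)$ for all $r>0$. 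Your route through~(\ref{filters_ultrafilter_ii}) is equally valid and arguably more in keeping with the shape of $\mathcal{A}_x$, since it stays entirely with sets of the form $X(f,r)$; the only cost is the small dichotomy on $\lvert f(x)\rvert$ (namely, $\lvert f(x)\rvert < r$ forces $X(f,r)\in \mathcal{N}_{\mathcal{F}}(x)$, so the hypothesis yields $\lvert f(x)\rvert \ge r>t$), which you resolved correctly.
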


\begin{proof}
If $f,g\in \mathcal{F}$ and $r>0$,
then $X(\lvert f\rvert +\lvert g\rvert ,r)\subseteq X(f,r)\cap X(g,r)$.
Therefore, $\mathcal{A}_x$ is a filter base on $X$.
Clearly, $\mathcal{A}_x$ generates the filter
$\mathcal{N}_{\mathcal{F}}(x)$,
and so $\mathcal{N}_{\mathcal{F}}(x)$ is an $\mathcal{F}$-filter
on $X$ by Lemma \ref{filters_fr-family}.
Then $\mathcal{N}_{\mathcal{F}}(x)$ is an $\mathcal{F}$-ultrafilter
on $X$ by Theorem \ref{filters_ultrafilter} (\ref{filters_ultrafilter_iv}).
\end{proof}

The following definition is reasonable by the previous theorem.

\begin{definition}
The \emph{evaluation mapping} $e:X\rightarrow \delta X$ is defined by
$e(x)=\mathcal{N}_{\mathcal{F}}(x)$.
\end{definition}

If $A\subseteq X$ and $x\in X$, then $e(x)\in \widehat{A}$ if and
only if $x\in A^{\circ }$.
Next, let $A,B\subseteq X$.
In general, $\widehat{B}\cap e(A)=\emptyset $ does not imply $B\cap
A=\emptyset $.
However, this
implication holds if $B$ is a $\tau (\mathcal{F})$-open subset of $X$.
We apply this fact repeatedly in what follows.

We collect some properties of the space $\delta X$ in the following
lemmas.

\begin{lemma}
\label{filters_closure}
Let $A\subseteq X$ and let $p\in \delta X$.
The following statements are equivalent:
\begin{enumerate}[\upshape (i)]
\item $p\in \overline{e(A)}$.
\item $A\cap B\neq \emptyset $ for every $B\in p$.
\item $X(f,r)\in p$ for every $f\in \mathcal{Z}(A)$ and for every $r>0$.
\end{enumerate}
In particular, $p\in \overline{e(A)}$ for every $A\in p$. 
\end{lemma}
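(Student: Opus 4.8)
The plan is to prove the cycle of implications (i) $\Rightarrow$ (ii) $\Rightarrow$ (iii) $\Rightarrow$ (i), together with the final remark as an immediate consequence. For (i) $\Rightarrow$ (ii), I would argue by contraposition: if there is a set $B\in p$ with $A\cap B=\emptyset$, then $B\subseteq X\setminus A$, so $\widehat{B}$ is a neighbourhood of $p$ in $\delta X$, and it suffices to show $\widehat{B}\cap e(A)=\emptyset$. Since $p$ is an $\mathcal{F}$-filter and $B\in p$, by Remark \ref{filters_interior} we may assume $B$ is $\tau(\mathcal{F})$-open (replacing $B$ by $B^{\circ}$ if necessary, which is still in $p$ and still disjoint from $A$); then the fact noted just before the lemma — that $\widehat{B}\cap e(A)=\emptyset$ does imply $B\cap A=\emptyset$ when $B$ is $\tau(\mathcal{F})$-open, read in contrapositive form — gives $\widehat{B}\cap e(A)=\emptyset$, hence $p\notin\overline{e(A)}$.

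For (ii) $\Rightarrow$ (iii), let $f\in\mathcal{Z}(A)$ and $r>0$. Then $A\subseteq Z(f)\subseteq X(f,r)$, so $A\cap B\neq\emptyset$ for every $B\in p$ implies $X(f,r)\cap B\neq\emptyset$ for every $B\in p$; since $p$ is an $\mathcal{F}$-filter, Corollary \ref{filters_extend} yields an $\mathcal{F}$-ultrafilter $q$ with $p\cup\{X(f,r):r>0\}\subseteq q$, and because $p$ is an $\mathcal{F}$-ultrafilter this forces $q=p$, so $X(f,r)\in p$ for every $r>0$. For (iii) $\Rightarrow$ (i), I want to show every basic neighbourhood $\widehat{C}$ of $p$ (so $C\in p$) meets $e(A)$, i.e. there is $x\in A$ with $x\in C^{\circ}$; equivalently $A\cap C^{\circ}\neq\emptyset$. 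By Remark \ref{filters_interior} we may take $C$ itself to be $\tau(\mathcal{F})$-open, and then it is enough to find $x\in A\cap C$. Suppose not, so $A\subseteq X\setminus C$; pick (via Remark \ref{filters_interior} applied to $C\in p$, $C\neq X$) a set $D\in p$ and $f\in\mathcal{F}$ with $f(D)=\{0\}$ and $f(X\setminus C)=\{1\}$. Then $f$ is constant $1$ on $A$, so $g=(1-f)\vee 0\in\mathcal{F}$ satisfies $g(A)=\{0\}$, i.e. $g\in\mathcal{Z}(A)$; by (iii), $X(g,1/2)\in p$, hence $X(g,1/2)\cap D\neq\emptyset$. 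But on $D$ we have $f=0$, so $g=1$, contradicting $X(g,1/2)\cap D\neq\emptyset$. This proves (i). The final assertion "$p\in\overline{e(A)}$ for every $A\in p$" follows from (i) $\Leftrightarrow$ (ii): if $A\in p$ then $A\cap B\neq\emptyset$ for every $B\in p$ since $A\cap B\in p$ and $\emptyset\notin p$.

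The main obstacle I anticipate is being careful and precise about the interaction between subsets of $X$ and the hatted sets in $\delta X$ — in particular, the implication "$\widehat{B}\cap e(A)=\emptyset\Rightarrow B\cap A=\emptyset$" only holds for $\tau(\mathcal{F})$-open $B$, so each time I pass between $X$ and $\delta X$ I must first replace the relevant set by its $\tau(\mathcal{F})$-interior and invoke Remark \ref{filters_interior} to stay inside $p$. The rest is routine: the lattice trick $g=(1-f)\vee 0$ to produce an element of $\mathcal{Z}(A)$ from a separating function, and the characterisation of $\mathcal{F}$-ultrafilters (any $\mathcal{F}$-filter containing an $\mathcal{F}$-ultrafilter equals it) to upgrade the extension furnished by Corollary \ref{filters_extend}.
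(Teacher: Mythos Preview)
Your proof is correct and follows essentially the same cycle of implications as the paper, with only cosmetic differences (the paper invokes Corollary \ref{filters_extend2} for (ii)$\Rightarrow$(iii), and in (iii)$\Rightarrow$(i) it chooses the separating function with the roles of $0$ and $1$ swapped so that it lies in $\mathcal{Z}(A)$ directly, avoiding your $g=(1-f)\vee 0$ trick). One small wording slip: in (i)$\Rightarrow$(ii) you need the implication $B\cap A=\emptyset\Rightarrow\widehat{B}\cap e(A)=\emptyset$, which is the \emph{converse} (not the contrapositive) of the fact you cite---but this converse holds trivially for \emph{any} $B$, since $e(x)\in\widehat{B}$ iff $x\in B^{\circ}\subseteq B$.
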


\begin{proof}
(i) $\Rightarrow $ (ii)
If $A\cap B=\emptyset $ for some $B\in p$,
then $A\cap B^{\circ }=\emptyset $,
and so $e(A)\cap \widehat{B^{\circ }}=\emptyset $. 
Since $B^{\circ }\in p$, we have $p\notin \overline{e(A)}$.

(ii) $\Rightarrow $ (iii)
This follows from Corollary \ref{filters_extend2}.

(iii) $\Rightarrow $ (i)
Suppose that $p\notin \overline{e(A)}$.
Pick a $\tau (\mathcal{F})$-open subset $B$ of $X$ such
that $B\in p$ and $\widehat{B}\cap e(A)=\emptyset $.
Then $B\cap A=\emptyset $.
Pick a set $C\in p$ and a function $f\in \mathcal{F}$ such
that $f(C)=\{1\}$ and $f(X\setminus B)=\{0\}$.
Then $f\in \mathcal{Z}(A)$.
Since $X(f,1/2)\cap C=\emptyset $,
we have $X(f,1/2)\notin p$.
\end{proof}

\begin{lemma}
\label{filters_properties}
If $A,B\subseteq X$, then the following statements hold:
\begin{enumerate}[\upshape (i)]
\item\label{filters_properties_i}
$\widehat{X\setminus A}=\delta X\setminus \overline{e(A)}$.
\item\label{filters_properties_ii}
If $A$ is a $\tau (\mathcal{F})$-open subset of $X$, then
$\overline{e(A)}=\text{cl}_{\delta X}(\widehat{A})$.
\item\label{filters_properties_iii}
$\widehat{A}=\widehat{B}$ if and only if $A^{\circ }=B^{\circ }$.
\item\label{filters_properties_iv}
$\widehat{A}=\emptyset $ if and only if $A^{\circ }=\emptyset $.
\item\label{filters_properties_v}
$\widehat{A}=\delta X$ if and only if $A=X$.
\end{enumerate}
\end{lemma}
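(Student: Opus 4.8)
The plan is to prove the five statements in an order that lets the later ones lean on the earlier ones, with (i) carrying most of the weight. For (i), I would first recall from Lemma \ref{filters_closure} that $p \in \overline{e(A)}$ is equivalent to $A \cap B \neq \emptyset$ for every $B \in p$; hence $p \notin \overline{e(A)}$ means there is some $B \in p$ with $A \cap B = \emptyset$, i.e. $B \subseteq X \setminus A$, which by the upward-closure of the filter $p$ gives $X \setminus A \in p$, that is, $p \in \widehat{X \setminus A}$. Conversely, if $X \setminus A \in p$, then (using $\emptyset \notin p$) no $B \in p$ can be disjoint from $X \setminus A$ in the wrong direction — more directly, $A \cap (X \setminus A) = \emptyset$ and $X\setminus A \in p$, so by Lemma \ref{filters_closure}(ii) applied with $B = X\setminus A$ we get $p \notin \overline{e(A)}$. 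This proves $\delta X \setminus \overline{e(A)} = \widehat{X \setminus A}$. The one point to be careful about is that upward closure of $p$ is exactly what turns ``$B \subseteq X\setminus A$'' into ``$X\setminus A \in p$'', so I would state that explicitly.

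For (ii), assume $A$ is $\tau(\mathcal{F})$-open, so $A = A^\circ$. From the remark following the evaluation mapping, $e(x) \in \widehat{A}$ iff $x \in A^\circ = A$, so $e(A) \subseteq \widehat{A}$, hence $\overline{e(A)} \subseteq \mathrm{cl}_{\delta X}(\widehat{A})$. For the reverse inclusion, it suffices to show $\widehat{A} \subseteq \overline{e(A)}$ (then take closures); this follows from the final sentence of Lemma \ref{filters_closure}: every $p$ with $A \in p$ lies in $\overline{e(A)}$. So $\mathrm{cl}_{\delta X}(\widehat{A}) \subseteq \overline{e(A)}$, giving equality.

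For (iii), I would use Remark \ref{filters_interior}: for any $p \in \delta X$ and any $C \subseteq X$, $C \in p$ iff $C^\circ \in p$. Hence $\widehat{A} = \widehat{A^\circ}$ and $\widehat{B} = \widehat{B^\circ}$ for all $A,B$. If $A^\circ = B^\circ$, then trivially $\widehat{A} = \widehat{A^\circ} = \widehat{B^\circ} = \widehat{B}$. Conversely, suppose $\widehat{A} = \widehat{B}$; I want $A^\circ = B^\circ$. By symmetry it is enough to show $A^\circ \subseteq B^\circ$. Take $x \in A^\circ$; then $e(x) \in \widehat{A} = \widehat{B}$, so $B \in e(x) = \mathcal{N}_{\mathcal{F}}(x)$, which means (again by the remark on $e$) that $x \in B^\circ$. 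Statement (iv) is the special case $B = \emptyset$ of (iii), since $\emptyset^\circ = \emptyset$ and $\widehat{\emptyset} = \emptyset$. For (v): if $A = X$ then $\widehat{X} = \delta X$ since $X$ lies in every filter. Conversely, if $\widehat{A} = \delta X$, then by (i) we get $\widehat{X \setminus A} = \delta X \setminus \overline{e(A)} \subseteq \delta X \setminus \widehat{A} = \emptyset$ (using $\widehat{A}\subseteq \overline{e(A)}$, or directly that $\widehat{A}$ and $\widehat{X\setminus A}$ are disjoint), so $\widehat{X\setminus A} = \emptyset$; by (iv) this forces $(X\setminus A)^\circ = \emptyset$, and since $X \setminus A$ need not be open I instead argue: $\widehat{X\setminus A} = \emptyset$ together with $e(x) \in \widehat{X\setminus A}$ whenever $x \in (X\setminus A)^\circ$ shows $(X\setminus A)^\circ = \emptyset$; but also for any $x \notin A$, I would need $X\setminus A$ to be a neighborhood of $x$ — this fails in general, so the cleaner route is: suppose $A \neq X$ and pick $x \notin A$; is $e(x) \notin \widehat{A}$? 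That needs $A \notin \mathcal{N}_{\mathcal{F}}(x)$, i.e. $x \notin A^\circ$, which holds since $x \notin A \supseteq A^\circ$. Hence $e(x) \notin \widehat{A}$, so $\widehat{A} \neq \delta X$, and the contrapositive gives (v).

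The main obstacle is statement (i): everything else is bookkeeping once one has the translation $C \in p \Leftrightarrow C^\circ \in p$ and $e(x) \in \widehat{C} \Leftrightarrow x \in C^\circ$. In (i) the subtlety is that $\overline{e(A)}$ is the closure of the \emph{image} $e(A)$, not of $\widehat{A}$, and the two can differ when $A$ is not open; the bridge is precisely Lemma \ref{filters_closure}, whose equivalence of ``$p \in \overline{e(A)}$'' with ``$A$ meets every member of $p$'' is what lets upward-closure of the ultrafilter convert disjointness into membership of $X \setminus A$.
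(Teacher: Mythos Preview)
Your proof is correct and tracks the paper's argument closely. The differences are minor and stylistic: for (i) you go straight through Lemma~\ref{filters_closure}'s characterization of $\overline{e(A)}$, whereas the paper argues topologically (using that $\widehat{X\setminus A}$ is a basic open set disjoint from $e(A)$, and for the converse picking a $\tau(\mathcal{F})$-open $C\in p$ with $\widehat{C}\cap e(A)=\emptyset$). For (ii) you obtain $\overline{e(A)}\subseteq\mathrm{cl}_{\delta X}(\widehat{A})$ from the one-line observation $e(A)\subseteq\widehat{A}$ when $A$ is $\tau(\mathcal{F})$-open, which is a bit cleaner than the paper's neighborhood argument; conversely the paper gets $\mathrm{cl}_{\delta X}(\widehat{A})\subseteq\overline{e(A)}$ from (i) for arbitrary $A$, while you get it from the last line of Lemma~\ref{filters_closure} --- these are the same fact. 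For (iii)--(v) the paper simply says they follow from Remark~\ref{filters_interior} and from (iii); your spelled-out arguments are exactly what is intended, and your final direct argument for (v) (pick $x\notin A$, then $e(x)\notin\widehat{A}$) is equivalent to applying (iii) with $B=X$.
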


\begin{proof}
(i)
Let $p\in \widehat{X\setminus A}$.
Since $\widehat{X\setminus A}\cap e(A)=\emptyset $,
we have $p\notin \overline{e(A)}$.
On the other hand, if $p\in \delta X\setminus \overline{e(A)}$,
then there exists a $\tau (\mathcal{F})$-open subset $C$ of $X$
such that $C\in p$ and $\widehat{C}\cap e(A)=\emptyset $.
Then $C\cap A=\emptyset $,
and so $X\setminus A\in p$, as required.
 
(ii)
The inclusion $\text{cl}_{\delta X}(\widehat{A})\subseteq
\overline{e(A)}$ holds for any subset $A$ of $X$ and follows from
statement (\ref{filters_properties_i}).
Suppose now that $A$ is a $\tau (\mathcal{F})$-open subset of $X$ and
let $p\in \overline{e(A)}$.
If $B\in p$, then $\widehat{B}\cap e(A)\neq \emptyset $, so $B^{\circ }\cap
A\neq \emptyset $, and so
$\widehat{B}\cap \widehat{A}\neq \emptyset $.
Therefore, $p\in \text{cl}_{\delta X}(\widehat{A})$.

Statement (\ref{filters_properties_iii}) follows from Remark \ref{filters_interior}.
Then (\ref{filters_properties_iv}) and
(\ref{filters_properties_v}) follow from (\ref{filters_properties_iii}).
\end{proof}

\begin{lemma}
\label{filters_intersecting}
If $A,B\subseteq X$,
then $X(f,r)\cap X(g,r)\neq
\emptyset $ for all $f\in \mathcal{Z}(A)$, $g\in \mathcal{Z}(B)$, and
$r>0$ if and only if $\overline{e(A)}\cap
\overline{e(B)}\neq \emptyset $.
\end{lemma}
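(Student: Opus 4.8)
The plan is to prove both implications by contraposition, using Lemma \ref{filters_closure} to translate statements about $\overline{e(A)}$ and $\overline{e(B)}$ into statements about membership of sets $X(f,r)$ in $\mathcal{F}$-ultrafilters. The key observation is that the family
\[
\mathcal{A}=\{ X(f,r) : f\in \mathcal{Z}(A),\, r>0\}\cup\{ X(g,r) : g\in \mathcal{Z}(B),\, r>0\}
\]
is an $\mathcal{F}$-family on $X$ by Lemma \ref{filters_fr-family} (applied with $\mathcal{F}'=\mathcal{Z}(A)\cup\mathcal{Z}(B)\subseteq\mathcal{F}_0$, noting that the constant function $0$ lies in both $\mathcal{Z}(A)$ and $\mathcal{Z}(B)$, so $X\in\mathcal{A}$ and the family is non-empty with each $X(f,r)\neq\emptyset$).

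For the forward direction, suppose $X(f,r)\cap X(g,r)\neq\emptyset$ for all $f\in\mathcal{Z}(A)$, $g\in\mathcal{Z}(B)$, and $r>0$. I would first check that $\mathcal{A}$ has the finite intersection property: given finitely many sets from $\mathcal{A}$, using $X(\lvert f_1\rvert+\cdots+\lvert f_k\rvert,r)\subseteq\bigcap_i X(f_i,r)$ we may reduce any finite subfamily coming from $\mathcal{Z}(A)$ to a single set $X(f,r)$ with $f\in\mathcal{Z}(A)$, and likewise the part from $\mathcal{Z}(B)$ to a single $X(g,r)$ (taking the smallest radius); the hypothesis then gives non-empty intersection. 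By Lemma \ref{filters_existence} there is an $\mathcal{F}$-ultrafilter $p$ with $\mathcal{A}\subseteq p$. Then $X(f,r)\in p$ for every $f\in\mathcal{Z}(A)$ and $r>0$, so $p\in\overline{e(A)}$ by Lemma \ref{filters_closure}, and similarly $p\in\overline{e(B)}$; hence $\overline{e(A)}\cap\overline{e(B)}\neq\emptyset$.

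For the converse, suppose $\overline{e(A)}\cap\overline{e(B)}\neq\emptyset$ and pick $p$ in the intersection. By Lemma \ref{filters_closure} (iii), $X(f,r)\in p$ for every $f\in\mathcal{Z}(A)$ and $r>0$, and $X(g,r)\in p$ for every $g\in\mathcal{Z}(B)$ and $r>0$. Since $p$ is a filter, $X(f,r)\cap X(g,r)\in p$, and as $\emptyset\notin p$ this intersection is non-empty, for all such $f,g,r$; this is exactly the left-hand condition. I expect the main (though still routine) obstacle to be the finite-intersection-property verification in the forward direction: one must be a little careful that an arbitrary finite subfamily of $\mathcal{A}$ mixes sets indexed by $\mathcal{Z}(A)$ and by $\mathcal{Z}(B)$ with possibly different radii, and reduce it correctly via the lattice/triangle-inequality estimates before invoking the hypothesis — but no genuine difficulty arises, since the hypothesis is quantified over all $f,g,r$ and a common radius can always be chosen as the minimum.
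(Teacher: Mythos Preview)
Your proposal is correct and follows essentially the same route as the paper: for the forward direction you form the family $\mathcal{A}=\{X(h,r):h\in\mathcal{Z}(A)\cup\mathcal{Z}(B),\,r>0\}$, verify the finite intersection property by summing absolute values to reduce to a single $f\in\mathcal{Z}(A)$ and $g\in\mathcal{Z}(B)$, invoke Lemma~\ref{filters_existence} to get $p$, and then apply Lemma~\ref{filters_closure}; for the converse you pick $p\in\overline{e(A)}\cap\overline{e(B)}$ and use Lemma~\ref{filters_closure} together with the filter property. One cosmetic remark: you announce ``both implications by contraposition'' but then prove them directly, so you may want to drop that phrase.
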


\begin{proof}
Necessity follows from Lemma \ref{filters_closure}.
Suppose that $X(f,r)\cap X(g,r)\neq \emptyset $ for all $f\in
\mathcal{Z}(A)$, $g\in \mathcal{Z}(B)$, and $r>0$.
Put
\[
\mathcal{A}=\{ X(h,r) : h\in \mathcal{Z}(A)\cup \mathcal{Z}(B) , \,
r>0\}.
\]
Then $\mathcal{A}$ is an $\mathcal{F}$-family on $X$ by Lemma
\ref{filters_fr-family}.
If $f_1,\ldots ,f_n\in \mathcal{Z}(A)$ and $g_1,\ldots ,g_m\in
\mathcal{Z}(B)$ for some $n,m\in \mathbb{N}$,
then $f:=\sum _{k=1}^n \lvert f_k\rvert \in \mathcal{Z}(A)$,
$g:=\sum _{k=1}^m \lvert g_k\rvert \in \mathcal{Z}(B)$.
If $r>0$, then 
\[
X(f,r)\cap X(g,r)\subseteq (\bigcap _{k=1}^n X(f_k,r) )\cap
  (\bigcap _{k=1}^m X(g_k,r)),
\] and so $\mathcal{A}$ has the finite
  intersection property.
By Lemma \ref{filters_existence}, there exists an element $p\in \delta
X$ such that $\mathcal{A}\subseteq p$.
Then $p\in \overline{e(A)}\cap \overline{e(B)}$ by Lemma
\ref{filters_closure}.
\end{proof}

Now, we are ready to prove first of the main theorems of this section.

\begin{theorem}
\label{filters_compact}
The space $\delta X$ is a compact Hausdorff space and $e(X)$
is dense in $\delta X$.
\end{theorem}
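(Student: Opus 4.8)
The plan is to verify the three assertions separately, density of $e(X)$ being immediate, the Hausdorff property short, and compactness the substantial part. For density, note that every $p\in\delta X$ contains $X$ and hence lies in $\overline{e(X)}$ by the last sentence of Lemma~\ref{filters_closure}; thus $\overline{e(X)}=\delta X$.

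\emph{Hausdorff.} Let $p\neq q$ in $\delta X$. Since $p=q$ whenever $p\subseteq q$ for $\mathcal{F}$-ultrafilters, we have $p\not\subseteq q$, so there is a set $A\in p\setminus q$, and $A\neq X$ because $X\in q$. Applying the defining property of an $\mathcal{F}$-filter to $A\in p$, I obtain $B\in p$ and $f\in\mathcal{F}$ with $0\leq f\leq 1$, $f(B)=\{0\}$ and $f(X\setminus A)=\{1\}$, whence $B\subseteq Z(f)\subseteq X(f,1/4)\subseteq X(f,1/2)\subseteq A$. In particular $X(f,1/4)\in p$, while $X(f,1/2)\notin q$ since $X(f,1/2)\subseteq A\notin q$ and $q$ is a filter. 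Then Theorem~\ref{filters_ultrafilter} (\ref{filters_ultrafilter_ii}), applied to $q$ with $t=1/4$, gives $C\in q$ with $X(f,1/4)\cap C=\emptyset$, and $\widehat{X(f,1/4)}$, $\widehat{C}$ are disjoint basic open neighbourhoods of $p$ and $q$ (using $\widehat{U}\cap\widehat{V}=\widehat{U\cap V}$, valid since $p$ is a filter, and $\widehat{\emptyset}=\emptyset$).

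\emph{Compactness.} As $\{\widehat{A}:A\subseteq X\}$ is a base closed under finite intersections, it suffices to show that every cover of $\delta X$ by sets $\widehat{A_i}$, $i\in I$, has a finite subcover; assume it does not. Discarding the $i$ with $\widehat{A_i}=\emptyset$, and noting $\widehat{A_i}\neq\delta X$ for all $i$ (else a one-element subcover), I may assume $\emptyset\neq B_i:=X\setminus A_i$ for all $i$. By Lemma~\ref{filters_properties} (\ref{filters_properties_i}), with $B_i$ in the role of $A$, one has $\delta X\setminus\widehat{A_i}=\overline{e(B_i)}$, so ``no finite subcover'' says exactly that $\{\overline{e(B_i)}:i\in I\}$ has the finite intersection property. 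Put
\[
\mathcal{A}=\{\,X(f,r):f\in\mathcal{Z}(B_i)\text{ for some }i\in I,\ r>0\,\}.
\]
Since each $\mathcal{Z}(B_i)$ is a non-empty subset of $\mathcal{F}_0$ (a function vanishing on the non-empty set $B_i$ has non-empty zero set, hence non-empty sublevel sets $X(f,r)$), Lemma~\ref{filters_fr-family} shows $\mathcal{A}$ is an $\mathcal{F}$-family on $X$. It has the finite intersection property: given $X(f_1,r_1),\dots,X(f_n,r_n)\in\mathcal{A}$ with $f_k\in\mathcal{Z}(B_{i_k})$, pick $p$ in the non-empty set $\bigcap_{k=1}^n\overline{e(B_{i_k})}$; by Lemma~\ref{filters_closure} each $X(f_k,r_k)\in p$, so $\bigcap_{k=1}^n X(f_k,r_k)\in p$ is non-empty. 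By Lemma~\ref{filters_existence} there is an $\mathcal{F}$-ultrafilter $p$ with $\mathcal{A}\subseteq p$; for every $i$ this forces $X(f,r)\in p$ for all $f\in\mathcal{Z}(B_i)$ and $r>0$, so $p\in\overline{e(B_i)}=\delta X\setminus\widehat{A_i}$ by Lemma~\ref{filters_closure}. Hence $p\notin\bigcup_{i\in I}\widehat{A_i}$, contradicting that the $\widehat{A_i}$ cover $\delta X$.

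The main obstacle is this last argument. For an $\mathcal{F}$-ultrafilter $p$ the implication ``$A\cup B\in p\Rightarrow A\in p$ or $B\in p$'' fails, so $X\setminus A_i\in p$ is not equivalent to $A_i\notin p$, and the standard proof for $\beta X$ (extending $\{X\setminus A_i\}$ to an ultrafilter) does not apply. The fix is to work with the $\mathcal{F}$-family $\{X(f,r):f\in\mathcal{Z}(X\setminus A_i),\ r>0\}$ instead of the set $X\setminus A_i$, translating ``$p\notin\widehat{A_i}$'' into ``$p\in\overline{e(X\setminus A_i)}$'' via Lemma~\ref{filters_properties} and then into membership of this $\mathcal{F}$-family via Lemma~\ref{filters_closure}, so that Lemma~\ref{filters_existence} yields the desired $\mathcal{F}$-ultrafilter; the degenerate cases $A_i=\emptyset,X$ are handled first.
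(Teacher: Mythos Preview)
Your proof is correct and follows essentially the same route as the paper's. The only differences are cosmetic: for density the paper invokes Lemma~\ref{filters_properties}\,(\ref{filters_properties_iv}) while you use the last line of Lemma~\ref{filters_closure}; for compactness the paper works with the closed base $\{\overline{e(A)}:A\subseteq X\}$ and the finite intersection property (citing Lemma~\ref{filters_intersecting}), whereas you take the dual open-cover formulation and extract the FIP of $\mathcal{A}$ directly from Lemma~\ref{filters_closure}---the underlying idea of passing from the sets $X\setminus A_i$ to the $\mathcal{F}$-family $\{X(f,r):f\in\mathcal{Z}(X\setminus A_i),\,r>0\}$ and then applying Lemma~\ref{filters_existence} is identical.
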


\begin{proof}
The density of $e(X)$ in $\delta X$ follows from Lemma
\ref{filters_properties} (\ref{filters_properties_iv}).
To see that $\delta X$ is Hausdorff, let $p$ and $q$ be
distinct points of $\delta X$.
Pick some set $A\in p\setminus q$.
Pick a set $B\in p$ and a function $f\in \mathcal{F}$  such that
$f(B)=\{0\}$ and $f(X\setminus A)=\{1\}$.
Since $X(f,1/2)\notin q$, there
exists a set $C\in q$ such that $X(f,1/3)\cap C=\emptyset $ by Theorem
\ref{filters_ultrafilter} (\ref{filters_ultrafilter_ii}).
Then $B\cap C=\emptyset $, and so
$\widehat{B}$ and $\widehat{C}$ are disjoint neighborhoods
of $p$ and $q$, respectively.

Let us show that $\delta X$ is compact.
Lemma \ref{filters_properties} (\ref{filters_properties_i}) implies
that the family
$\mathcal{B}=\{ \overline{e(A)} : A\subseteq X \}$ is a
base for the closed sets of $\delta X$.
Suppose that a subset $\mathcal{C}$ of $\mathcal{B}$
has the finite intersection property.
To show that $\delta X$ is compact,
it is enough to show that $\bigcap _{C\in \mathcal{C}}C\neq \emptyset $.
Put $\mathcal{A}'=\{ A\subseteq X : \overline{e(A)}\in
\mathcal{C} \}$ and, then, put $\mathcal{A}=\{ X(f,r) : A\in
\mathcal{A}', \, f\in \mathcal{Z}(A), \, r>0 \}$.
Now, $\mathcal{A}$ is an $\mathcal{F}$-family on $X$ by Lemma
\ref{filters_fr-family} and $\mathcal{A}$ has the finite intersection
property by Lemma \ref{filters_intersecting}.
By Lemma \ref{filters_existence},
there exists an element $p\in \delta X$ such that
$\mathcal{A}\subseteq p$.
Then $p\in \overline{e(A)}$ for every $A\in \mathcal{A}'$ by
Lemma \ref{filters_closure},
and so $p\in \bigcap _{C\in \mathcal{C}}C$,
as required.
\end{proof}

We finish this section by showing that $\mathcal{F}$-filters
describe the topology of $\delta X$.
As in the Stone-\v Cech compactification of a discrete topological
space, we have
two interpretations for the closure of an $\mathcal{F}$-filter in
$\delta X$, namely
$\widehat{\varphi }$ and the following.

\begin{definition}
Define $\overline{\varphi }=\bigcap _{A\in \varphi } \overline{e(A)}$
for every $\mathcal{F}$-filter $\varphi $ on $X$.
\end{definition}

Note that $\overline{\varphi }$ is a non-empty, closed subset of
$\delta X$.

\begin{theorem}
\label{filters_filtclos}
If $\varphi $ is an $\mathcal{F}$-filter on $X$, then $\widehat{\varphi
}=\overline{\varphi }$.
\end{theorem}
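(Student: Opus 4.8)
The plan is to rewrite both sides as intersections indexed by $\varphi$ and then compare them termwise. By Theorem~\ref{filters_filtprop}~(\ref{filters_filtprop_i}) we have $\widehat{\varphi}=\bigcap_{A\in\varphi}\widehat{A}$, while $\overline{\varphi}=\bigcap_{A\in\varphi}\overline{e(A)}$ by definition, so it suffices to show that these two intersections coincide. For the inclusion $\widehat{\varphi}\subseteq\overline{\varphi}$ I would simply note that $\widehat{A}\subseteq\overline{e(A)}$ holds for every $A\subseteq X$: this is the final assertion of Lemma~\ref{filters_closure} (if $A\in p$ then $p\in\overline{e(A)}$). Intersecting over $A\in\varphi$ gives $\widehat{\varphi}\subseteq\overline{\varphi}$ at once.

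For the reverse inclusion I would fix $p\in\overline{\varphi}$ and prove that $\varphi\subseteq p$, which gives $p\in\widehat{\varphi}$. Let $A\in\varphi$. If $A=X$ then $A\in p$ trivially, so assume $A\neq X$ and use the $\mathcal{F}$-family property of $\varphi$ to choose $B\in\varphi$ and a real-valued $f\in\mathcal{F}$ with $f(X)\subseteq[0,1]$, $f(B)=\{0\}$, and $f(X\setminus A)=\{1\}$. Then $f\in\mathcal{Z}(B)$, and since $B\in\varphi$ we have $p\in\overline{e(B)}$; hence, by the equivalence of statements (i) and (iii) in Lemma~\ref{filters_closure} applied to the set $B$, $X(f,r)\in p$ for every $r>0$. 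In particular $X(f,1/2)\in p$. Since $f(x)=1>1/2$ for every $x\in X\setminus A$, we have $X(f,1/2)\subseteq A$, and as $p$ is a filter it follows that $A\in p$. This proves $\varphi\subseteq p$, so $p\in\widehat{\varphi}$ and thus $\overline{\varphi}\subseteq\widehat{\varphi}$.

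I do not anticipate a genuine obstacle; the argument is essentially a termwise comparison and both halves are already packaged in Lemma~\ref{filters_closure}. The only points needing a little care are the degenerate case $A=X$, where the definition of an $\mathcal{F}$-family provides no separating function, and the observation that the zero-neighbourhood $X(f,1/2)$ extracted from the separating function $f$ is actually contained in $A$ --- this is precisely what upgrades membership of a ``small'' set of the form $X(f,r)$ in $p$ to membership of $A$ itself.
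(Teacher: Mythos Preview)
Your proof is correct and follows essentially the same route as the paper's: both directions are deduced from Lemma~\ref{filters_closure}, and the reverse inclusion uses the separating function $f$ with $f(B)=\{0\}$, $f(X\setminus A)=\{1\}$ to conclude $X(f,1/2)\in p$ and hence $A\in p$. The only differences are cosmetic---you spell out the intersection decomposition and the trivial case $A=X$ explicitly, while the paper leaves these implicit.
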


\begin{proof}
The inclusion $\widehat{\varphi }\subseteq \overline{\varphi }$
follows from Lemma \ref{filters_closure}.
To prove the reverse inclusion,
let $p\in \overline{\varphi }$ and
let $A\in \varphi $ with $A\neq X$.
Pick a set $B\in \varphi $ and a function
$f\in \mathcal{F}$ such that $f(B)=\{0\}$ and $f(X\setminus A)=\{1\}$.
Since $p\in \overline{e(B)}$,
we have $X(f,1/2)\in p$ by Lemma \ref{filters_closure},
and so $A\in p$.
Therefore, $\varphi \subseteq p$,
as required.
\end{proof}

\begin{theorem}
\label{filters_Cfilter}
If $C$ is a non-empty, closed subset of $\delta X$, then
there exists a unique $\mathcal{F}$-filter $\varphi $ on $X$ such that
$\widehat{\varphi }=C$.
\end{theorem}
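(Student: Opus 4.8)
The plan is to let $\varphi$ be the intersection of all the $\mathcal{F}$-ultrafilters lying in $C$, that is, $\varphi = \bigcap_{p \in C} p$, and to prove that $\varphi$ is an $\mathcal{F}$-filter satisfying $\widehat{\varphi} = C$. Uniqueness then costs nothing: if $\psi$ is another $\mathcal{F}$-filter with $\widehat{\psi} = C = \widehat{\varphi}$, then $\psi = \varphi$ by Theorem \ref{filters_filtprop} (iv). So the whole argument is in the existence part, and I would split it into showing (a) that $\varphi$ is an $\mathcal{F}$-filter and (b) that $\widehat{\varphi} = C$.

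For (a), the fact that $\varphi$ is a filter is routine, since $C$ is non-empty and an intersection of a non-empty family of filters is again a filter. The substantive point, and the step I expect to be the main obstacle, is that $\varphi$ is an $\mathcal{F}$-family. I would fix $A \in \varphi$ with $A \neq X$ and, for each $p \in C$, use that $p$ is an $\mathcal{F}$-filter to pick a set $B_p \in p$ and a function $f_p \in \mathcal{F}$ with $0 \leq f_p \leq 1$, $f_p(B_p) = \{0\}$, and $f_p(X \setminus A) = \{1\}$. Since $B_p \in p$ we have $p \in \widehat{B_p}$, so the sets $\widehat{B_p}$ ($p \in C$) form an open cover of $C$; because $C$ is a closed subset of the compact space $\delta X$ (Theorem \ref{filters_compact}) it is compact, so finitely many $\widehat{B_{p_1}}, \dots, \widehat{B_{p_n}}$ already cover $C$. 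Then $B := B_{p_1} \cup \dots \cup B_{p_n}$ belongs to every $q \in C$, hence $B \in \varphi$, and $f := f_{p_1} \wedge \dots \wedge f_{p_n} \in \mathcal{F}$ satisfies $f(B) = \{0\}$ (at each point of $B$ some $f_{p_k}$ vanishes while all $f_{p_j}$ are non-negative) and $f(X \setminus A) = \{1\}$. This is precisely the witness required by the definition of an $\mathcal{F}$-family, so $\varphi$ is an $\mathcal{F}$-filter.

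For (b), the inclusion $C \subseteq \widehat{\varphi}$ is immediate, since every $p \in C$ contains $\varphi$. For the reverse inclusion I would argue by contradiction: suppose $q \in \widehat{\varphi}$ but $q \notin C$. As $C$ is closed and the sets $\widehat{A}$ form a base for the topology of $\delta X$, there is a subset $T$ of $X$ with $T \in q$ and $\widehat{T} \cap C = \emptyset$, and $T \neq X$ (otherwise $\widehat{T} = \delta X$ would meet the non-empty set $C$). Using that $q$ is an $\mathcal{F}$-filter, pick $B \in q$ and $g \in \mathcal{F}$ with $0 \leq g \leq 1$, $g(B) = \{0\}$, and $g(X \setminus T) = \{1\}$. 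From $T \notin p$ for every $p \in C$ together with Lemma \ref{filters_properties} (i) one gets $C \subseteq \overline{e(X \setminus T)}$, and then Lemma \ref{filters_closure}, applied with the function $1 - g \in \mathcal{Z}(X \setminus T)$, gives $X(1 - g, 1/2) \in p$ for every $p \in C$, hence $X(1 - g, 1/2) \in \varphi \subseteq q$. But $g$ vanishes identically on $B$, so $B \cap X(1 - g, 1/2) = \emptyset$, which is impossible as both sets lie in $q$. Therefore $q \in C$, so $\widehat{\varphi} = C$, completing the proof.
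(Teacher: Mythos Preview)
Your proof is correct and follows the same overall strategy as the paper: define $\varphi=\bigcap_{p\in C}p$, use compactness of $C$ to verify the $\mathcal{F}$-family condition, and then check $\widehat{\varphi}=C$, with uniqueness coming from Theorem~\ref{filters_filtprop}~(iv). Two minor technical differences are worth noting. In part~(a) you take $f=f_{p_1}\wedge\cdots\wedge f_{p_n}$, which gives the witness directly with $f(B)=\{0\}$ and $f(X\setminus A)=\{1\}$; the paper instead uses the sum $f=\sum f_{p_k}$, obtaining $f\le n-1$ on $B$ and $f=n$ on $X\setminus A$, and then appeals to Remark~\ref{filters_Ffamrem}. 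Both are equally valid. In part~(b) the paper runs a second compactness argument: for each $p\in C$ it chooses a $\tau(\mathcal{F})$-open $B_p\in p$ disjoint from the chosen $\tau(\mathcal{F})$-open neighbourhood of $q$, extracts a finite subcover, and takes the union to produce an element of $\varphi$ missing $q$. Your route through Lemma~\ref{filters_properties}~(i) and Lemma~\ref{filters_closure} is more direct and avoids this second use of compactness; it buys a slightly shorter argument at the cost of invoking those lemmas, while the paper's version is more self-contained.
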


\begin{proof}
Let $C$ be a non-empty, closed subset of $\delta X$.
Put $\varphi =\bigcap _{p\in C} p$.
Clearly, $\varphi $ is a filter on $X$.
Let us show that $\varphi $ is an $\mathcal{F}$-family on $X$, hence,
an $\mathcal{F}$-filter on $X$.
Let $A\in \varphi $.
If $p\in C$, then $A\in p$, and so there exist a set $B_p\in p$
and a function $f_p\in \mathcal{F}$ such that $f(X)\subseteq [0,1]$,
$f_p(B_p)=\{0\}$, and $f_p(X\setminus A)=\{1\}$.
Now, $\{ \widehat{B}_p : p\in C\}$ is an open cover of $C$,
and so there exist some $n\in \mathbb{N}$ and points $p_1,\ldots ,p_n\in C$
such that $C\subseteq \bigcup _{k=1}^n \widehat{B}_{p_k}$.
Put $f=\sum _{k=1}^n f_{p_k}$ and $B=\bigcup _{k=1}^n B_{p_k}$.
Then $B\in \varphi $ by Theorem \ref{filters_filtprop}.
Since $f(x)\leq n-1$ for every $x\in B$ and $f(x)=n$ for every
$x\in X\setminus A$,
the filter $\varphi $ is an $\mathcal{F}$-family on $X$.

Let us verify the equality $\widehat{\varphi }=C$.
The inclusion $C\subseteq \widehat{\varphi }$ is obvious,
so suppose that $q\in \delta X\setminus C$.
Then there exists a $\tau (\mathcal{F})$-open subset $A$ of $X$ such
that $A\in q$ and $\widehat{A}\cap C=\emptyset $.
For every $p\in C$, pick a $\tau (\mathcal{F})$-open subset $B_p$ of
$X$ such that $B_p\in p$ and $\widehat{A}\cap\widehat{B}_p=\emptyset $.
Then $A\cap B_p=\emptyset $ for every $p\in C$.
As above, there exist $n\in \mathbb{N}$ and points
$p_1,\ldots ,p_n\in C$ such that $B:=\bigcup _{k=1}^n B_{p_k}\in \varphi $.
Since $A\cap B=\emptyset $, we have $q\notin \widehat{\varphi }$, as
required.

Finally, uniqueness of $\varphi $ follows from Theorem
\ref{filters_filtprop} (\ref{filters_filtprop_iv}).
\end{proof}

\section{Continuous functions on $\delta X$}
\label{sec:cont-funct-delta}

This section is devoted to a study of continuous functions on the
space $\delta X$.
We show that the $C^{\ast }$-algebras $\mathcal{F}$ and $C(\delta X)$
are isometrically $\ast $-isomorphic, that is, we show that $\delta X$
is the spectrum of $\mathcal{F}$.
Furthermore, we show that every dense image of $X$ in a compact
Hausdorff space is determined up to homeomorphism by some
$C^{\ast }$-subalgebra of $\ell ^{\infty }(X)$ containing the constant
functions.

We leave the proof of the following simple lemma to the reader.

\begin{lemma}
\label{filters_rset}
If $p\in \delta X$, $g\in C(\delta X)$, and $r>0$,
then
\[
\{ x\in X : \lvert g(p)- g(e(x)) \rvert \leq r\} \in p.
\]
\end{lemma}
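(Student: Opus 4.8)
The plan is to exploit the continuity of $g$ at the single point $p$, together with the fact that $\{\widehat{B} : B\in p\}$ is a neighborhood base of $p$ in $\delta X$. Throughout, write $A=\{x\in X : \lvert g(p)-g(e(x))\rvert \leq r\}$ for the set in question, so that what must be shown is $A\in p$.

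First I would set $V=\{q\in \delta X : \lvert g(p)-g(q)\rvert <r\}$. Since $g$ is continuous on $\delta X$, the set $V$ is open, and it contains $p$. As $\{\widehat{B} : B\in p\}$ is a neighborhood base of $p$, there is a set $B\in p$ with $\widehat{B}\subseteq V$. By Remark \ref{filters_interior} I may then replace $B$ by $B^{\circ }$: we still have $B^{\circ }\in p$, and since $B^{\circ }\subseteq B$ we get $\widehat{B^{\circ }}\subseteq \widehat{B}\subseteq V$.

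Now fix $x\in B^{\circ }$. Because $B^{\circ }$ is $\tau (\mathcal{F})$-open we have $(B^{\circ })^{\circ }=B^{\circ }$, so $e(x)\in \widehat{B^{\circ }}\subseteq V$; that is, $\lvert g(p)-g(e(x))\rvert <r$, and hence $x\in A$. This shows $B^{\circ }\subseteq A$, and since $B^{\circ }\in p$ and $p$ is a filter, $A\in p$, as required.

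The only point requiring care — and the reason I would pass from $B$ to its interior $B^{\circ }$ — is that the relation linking $e$ to the sets $\widehat{B}$ reads $e(x)\in \widehat{B}$ if and only if $x\in B^{\circ }$, not $x\in B$; once interiors are taken this subtlety disappears and the inclusion $B^{\circ }\subseteq A$ is immediate. No other obstacle arises, so the argument is very short.
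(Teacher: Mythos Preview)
Your proof is correct. The paper itself leaves the proof of this lemma to the reader, so there is nothing to compare against; your argument---using continuity of $g$ to find a basic neighborhood $\widehat{B}\subseteq V$, then passing to $B^{\circ}$ to ensure $e(x)\in\widehat{B^{\circ}}$ for every $x\in B^{\circ}$---is exactly the kind of short verification the authors had in mind, and your attention to the subtlety that $e(x)\in\widehat{B}$ only when $x\in B^{\circ}$ is appropriate.
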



\begin{theorem}
\label{filters_extension}
If $f\in \mathcal{F}$, then there exists a unique function
$\widehat{f}\in C(\delta X)$ with $f=\widehat{f}\circ
e$.
\end{theorem}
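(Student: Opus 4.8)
The plan is to define $\widehat{f}$ by a ``limit along the ultrafilter'' and then verify, in turn, that it is well defined and bounded, that it is continuous, that $\widehat{f}\circ e=f$, and that it is unique.

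Fix $p\in\delta X$. Since $f\in\ell^{\infty}(X)$, the set $\text{cl}_{\mathbb{C}}(f(X))$ is compact, and the family $\{\text{cl}_{\mathbb{C}}(f(A)):A\in p\}$ consists of closed subsets of it with the finite intersection property, because $p$ is a filter and $f(A\cap B)\subseteq f(A)\cap f(B)$. Hence $K_{p}:=\bigcap_{A\in p}\text{cl}_{\mathbb{C}}(f(A))$ is non-empty. The crucial point --- and the only place where it matters that $p$ is an $\mathcal{F}$-ultrafilter and not merely an $\mathcal{F}$-filter --- is that $K_{p}$ is a singleton. To see this, suppose $c\in K_{p}$. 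Then for every $A\in p$ and every $r>0$ there is $x\in A$ with $\lvert f(x)-c\rvert<r$, so $X(f-c,r)\cap A\neq\emptyset$ for all $A\in p$ and all $r>0$; since $f-c\in\mathcal{F}$ (the constants lie in $\mathcal{F}$), Corollary \ref{filters_extend} yields an $\mathcal{F}$-ultrafilter containing $p\cup\{X(f-c,r):r>0\}$, and as $p$ is already an $\mathcal{F}$-ultrafilter this forces $X(f-c,r)\in p$ for every $r>0$. If now $c_{1},c_{2}\in K_{p}$ with $c_{1}\neq c_{2}$, then with $r=\lvert c_{1}-c_{2}\rvert/3>0$ we would get $X(f-c_{1},r)\cap X(f-c_{2},r)\in p$, which is impossible since that intersection is empty. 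Thus I may define $\widehat{f}(p)$ to be the unique element of $K_{p}$, and $\widehat{f}(\delta X)\subseteq\text{cl}_{\mathbb{C}}(f(X))$ shows $\widehat{f}$ is bounded.

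For $x\in X$, every member of $e(x)=\mathcal{N}_{\mathcal{F}}(x)$ is a neighbourhood of $x$ in $(X,\tau(\mathcal{F}))$ and hence contains $x$, so $f(x)\in\text{cl}_{\mathbb{C}}(f(A))$ for all $A\in e(x)$; therefore $f(x)\in K_{e(x)}$ and $\widehat{f}(e(x))=f(x)$, i.e.\ $\widehat{f}\circ e=f$. For continuity at a point $p_{0}\in\delta X$, put $c_{0}=\widehat{f}(p_{0})$, let $\varepsilon>0$, and write $B(c_{0},\varepsilon)$ for the open ball of radius $\varepsilon$ about $c_{0}$ in $\mathbb{C}$. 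Because $\{c_{0}\}=\bigcap_{A\in p_{0}}\text{cl}_{\mathbb{C}}(f(A))$, the compact sets $\text{cl}_{\mathbb{C}}(f(A))\setminus B(c_{0},\varepsilon)$, $A\in p_{0}$, have empty intersection, so finitely many $A_{1},\dots,A_{m}\in p_{0}$ already satisfy $\bigcap_{k=1}^{m}\text{cl}_{\mathbb{C}}(f(A_{k}))\subseteq B(c_{0},\varepsilon)$. Setting $A=\bigcap_{k=1}^{m}A_{k}\in p_{0}$, we have $f(A)\subseteq B(c_{0},\varepsilon)$, so for every $p\in\widehat{A}$, $\widehat{f}(p)\in\text{cl}_{\mathbb{C}}(f(A))\subseteq\text{cl}_{\mathbb{C}}(B(c_{0},\varepsilon))$, whence $\lvert\widehat{f}(p)-c_{0}\rvert\leq\varepsilon$; since $\widehat{A}$ is an open neighbourhood of $p_{0}$, $\widehat{f}$ is continuous at $p_{0}$. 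Hence $\widehat{f}\in C(\delta X)$.

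Finally, uniqueness follows from Theorem \ref{filters_compact}: $e(X)$ is dense in $\delta X$, so two continuous complex-valued functions on $\delta X$ that agree on $e(X)$ agree everywhere, and $\widehat{f}\circ e=f$ pins down $\widehat{f}$ on $e(X)$. The main obstacle is the singleton claim for $K_{p}$, where Corollary \ref{filters_extend} together with the maximality of $p$ is essential; the rest is routine, with the compactness argument for continuity the only other spot requiring a little care.
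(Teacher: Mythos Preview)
Your proof is correct and follows essentially the same approach as the paper: both define $\widehat{f}(p)$ via the intersection $\bigcap_{A\in p}\text{cl}_{\mathbb{C}}(f(A))$, both invoke Corollary~\ref{filters_extend} to show $X(f-c,r)\in p$, and both deduce uniqueness from the density of $e(X)$. The only organizational differences are that you establish the singleton property of $K_p$ up front (the paper instead defines $\widehat{f}$ by choice and infers it is a singleton afterward from uniqueness), and for continuity you use a finite-intersection compactness argument rather than directly taking $\widehat{X(f-\widehat{f}(p_0),r)}$ as the neighborhood---both are routine and equally valid.
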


\begin{proof}
Let $p\in \delta X$ and define
\[
C=\bigcap _{A\in p} \text{cl}_{\mathbb{C}}(f(A)).
\]
Since $f$ is bounded and $p$ has the finite intersection property,
the set $C$ is a non-empty subset of $\mathbb{C}$.
Choosing any element $\widehat{f}(p)\in C$, we obtain a function
$\widehat{f}:\delta X\rightarrow \mathbb{C}$.

Next, let us show that if $p=\mathcal{N}_{\mathcal{F}}(x)$ for some
$x\in X$, then $C=\{f(x)\}$.
This will establish the equality $f=\widehat{f}\circ e$.
Clearly, $f(x)\in C$.
To see that $f(x)$ is the only member of $C$, let $y\in
\mathbb{C}$ with $y\neq f(x)$.
Pick $r>0$ such that $y\notin U:=\{ z\in \mathbb{C} : \lvert
f(x)-z\rvert \leq r\}$.
Then $f^{-1}(U)\in \mathcal{N}_{\mathcal{F}}(x)$.
Since $y\notin \text{cl}_{\mathbb{C}}(f(f^{-1}(U)))$, we have $y\notin
C$, as required.

Finally, let us show that $\widehat{f}$ is continuous.
This will also prove the uniqueness of $\widehat{f}$, since $e(X)$ is
dense in $\delta X$.
Let $p\in \delta X$ and put $g=f-\widehat{f}(p)$.
We claim that $X(g,r)\in p$ for every $r>0$.
By Corollary \ref{filters_extend}, it is enough to show that $X(g,r)\cap
B\neq \emptyset $ for every $B\in p$ and for every $r>0$.
So, let $B\in p$ and $r>0$ be given.
Since $\widehat{f}(p)\in \text{cl}_{\mathbb{C}}(f(B))$, there exists a point $x\in
B$ such that $\lvert g(x)\rvert =\lvert f(x)-\widehat{f}(p)\rvert \leq
r$, and so $x\in X(g,r)\cap B$, as required.
To finish the proof, let $r>0$.
If $q\in \widehat{X(g,r)}$, then $\widehat{f}(q)\in
\text{cl}_{\mathbb{C}}(f(X(g,r)))$,
and so $\lvert \widehat{f}(q)-\widehat{f}(p)\rvert \leq r$.
Therefore, $\widehat{f}$ is continuous at $p$.
\end{proof}

We defined the function $\widehat{f}$ by choosing any element from the
set $C$.
Since $\widehat{f}$ is unique, the set $C$ must be a
singleton for every $p\in \delta X$.
Although the evaluation mapping $e$ need not be injective, we call the
function $\widehat{f}$ an
\emph{extension} of $f$ to $\delta X$.

Following theorem can be deduced using
the Stone-Weierstrass Theorem.
However, we present the following proof using only properties of
$\mathcal{F}$-filters instead of the Stone-Weierstrass Theorem.

\begin{theorem}
\label{filters_isomorphic}
The mapping $\Gamma :\mathcal{F}\rightarrow C(\delta X)$
defined by $\Gamma (f)=\widehat{f}$ is an isometric $\ast $-isomorphism.
\end{theorem}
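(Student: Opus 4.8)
The plan is to verify in turn that $\Gamma$ is linear, multiplicative, $\ast$-preserving, isometric, and surjective; injectivity is then immediate from isometry. For the algebraic properties, the key observation is that $\widehat{f}$ is the \emph{unique} continuous function on $\delta X$ with $\widehat{f}\circ e=f$ (Theorem \ref{filters_extension}), so it suffices to check each identity on the dense subset $e(X)$. For instance, given $f,g\in\mathcal{F}$ and scalars $\alpha,\beta$, the function $\alpha\widehat{f}+\beta\widehat{g}$ lies in $C(\delta X)$ and, composed with $e$, equals $\alpha f+\beta g$; by uniqueness it therefore equals $\widehat{\alpha f+\beta g}$. The same density argument handles $\widehat{fg}=\widehat{f}\,\widehat{g}$ and $\widehat{\bar f}=\overline{\widehat{f}}$, and shows $\widehat{\mathbf 1}=\mathbf 1$.

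For the isometry, I would show $\lVert\widehat{f}\rVert_{\delta X}=\lVert f\rVert_X$ for every $f\in\mathcal{F}$. The inequality $\lVert\widehat f\rVert\le\lVert f\rVert$ is clear since $\widehat f(p)\in\operatorname{cl}_{\mathbb C}(f(X))$ for every $p$ (it lies in $\operatorname{cl}_{\mathbb C}(f(X))$ because $X\in p$). For the reverse inequality, put $r=\lVert f\rVert$; since $\lVert f\rVert=\sup_{x\in X}\lvert f(x)\rvert=\sup_{x\in X}\lvert\widehat f(e(x))\rvert\le\lVert\widehat f\rVert$, we are done at once. (Alternatively, if one wants a filter-flavoured argument: pick a net in $X$ on which $\lvert f\rvert\to r$, pass to a cluster point $p$ of the images in the compact space $\delta X$, and use continuity of $\widehat f$ to get $\lvert\widehat f(p)\rvert=r$.) In particular $\Gamma$ is injective.

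The main obstacle is surjectivity: given $g\in C(\delta X)$, I must produce $f\in\mathcal{F}$ with $\widehat f=g$, and the natural candidate $f:=g\circ e$ is manifestly bounded but not obviously a member of $\mathcal{F}$. Here is where Lemma \ref{filters_rset} does the work. Fix $g\in C(\delta X)$ and set $f=g\circ e\in\ell^\infty(X)$; by uniqueness in Theorem \ref{filters_extension} it is enough to show $f\in\mathcal{F}$, for then $\widehat f$ is the unique continuous extension of $f$, and $g$ is also a continuous extension of $f$ along $e$, forcing $g=\widehat f$. To see $f\in\mathcal{F}$, I would approximate $f$ uniformly by members of $\mathcal{F}$. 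Given $\varepsilon>0$, compactness of $\delta X$ and continuity of $g$ yield finitely many points $p_1,\dots,p_n\in\delta X$ such that the sets $U_k=\{q\in\delta X:\lvert g(q)-g(p_k)\rvert<\varepsilon\}$ cover $\delta X$; by Lemma \ref{filters_rset}, $V_k:=e^{-1}(U_k)\supseteq\{x\in X:\lvert g(p_k)-f(x)\rvert\le\varepsilon\}$ and these sets cover $X$. One then builds, via Remark \ref{filters_Ffamrem}/Urysohn-type functions available in $\mathcal F$, a partition-of-unity-like family $\{h_k\}\subseteq\mathcal{F}$ subordinate to $\{V_k\}$ with $\sum_k h_k=\mathbf 1$, and checks that $\sum_k g(p_k)h_k\in\mathcal{F}$ lies within $O(\varepsilon)$ of $f$ in $\lVert\cdot\rVert_X$. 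Since $\mathcal{F}$ is norm-closed, $f\in\mathcal{F}$, completing the proof that $\Gamma$ is onto and hence an isometric $\ast$-isomorphism.
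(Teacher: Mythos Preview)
Your treatment of the $\ast$-homomorphism and isometry parts matches the paper's: density of $e(X)$ plus uniqueness in Theorem~\ref{filters_extension}. For surjectivity you take a genuinely different route. The paper argues with lattice operations: for positive $g\in C(\delta X)$ with $\lVert g\rVert=1$, it slices $[0,1]$ into $n$ subintervals $I_k$, notes via Lemma~\ref{filters_rset} that each $p\in g^{-1}(I_k)$ contains the set $A_k=\{x:(k-2)/n<g(e(x))<(k+1)/n\}$, produces for each $k$ a function $f_k\in\mathcal F$ with $0\le f_k\le k/n$, $f_k=0$ off $A_k$, and $f_k=k/n$ on $\{x:e(x)\in g^{-1}(I_k)\}$, and then sets $f=f_1\vee\cdots\vee f_n$. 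No division is ever needed.

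Your partition-of-unity strategy can be made to work, but the sentence ``one then builds \dots a partition-of-unity-like family $\{h_k\}\subseteq\mathcal F$ subordinate to $\{V_k\}$ with $\sum_k h_k=\mathbf 1$'' is exactly where all the content of the surjectivity argument lives, and you have not supplied it. The existence of such a family \emph{inside} $\mathcal F$ (rather than merely in $C(\delta X)$ or $\ell^\infty(X)$) is the whole point; citing Remark~\ref{filters_Ffamrem} does not do this. Concretely you must: work on the compact space $\delta X$ rather than on $X$, since $(X,\tau(\mathcal F))$ need not be compact; for each $p\in U_k$ produce a bump $f_p\in\mathcal F$ with $\widehat{f_p}(p)=1$ and $\widehat{f_p}=0$ on $\delta X\setminus U_k$ (take $A\in p$ with $\text{cl}_{\delta X}\widehat A\subseteq U_k$ by regularity, then $B\in p$ and $f_p\in\mathcal F$ with $f_p(B)=\{1\}$, $f_p(X\setminus A)=\{0\}$, and use Lemma~\ref{filters_properties}\,(i) to see $\widehat{f_p}$ vanishes off $U_k$); extract a finite subcover from the sets $\{\widehat{f_p}>1/2\}$; and finally divide by $\sum f_{p_j}$, which requires that a positive element of $\mathcal F$ bounded below is invertible in $\mathcal F$---an elementary Banach-algebra fact, but one the paper only records later as Lemma~\ref{filters_inverse}. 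Your appeal to Lemma~\ref{filters_rset} at the covering step is also misplaced: the $V_k$ cover $X$ simply because the $U_k$ cover $\delta X$ and $V_k=e^{-1}(U_k)$. In short, the plan is sound but skips the one step that carries weight; the paper's lattice construction is marginally more self-contained in that it avoids the inversion altogether.
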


\begin{proof}
Using the density of $e(X)$ in $\delta X$ and the equality
$f=\widehat{f}\circ e$, it is easy to verify that $\Gamma $ is an
isometric $\ast $-homomorphism and we leave the details to the
reader.
To see that $\Gamma $ is surjective,
it is enough to show that for every positive function $g\in C(\delta
X)$ with $\lVert g\rVert =1$ and for every $r>0$, there
exists a function $f\in \mathcal{F}$ such that $\lVert
\widehat{f}-g\lVert \leq r$.
So, let $g\in C(\delta X)$ be positive
with $\lVert g\rVert =1$ and let $r>0$.
Pick $n\in \mathbb{N}$ such that $1/n\leq r/3$.
For every $k\in \{1,\ldots ,n\}$, define the following subsets of
$[0,1]$, $X$, and $\delta X$, respectively:
\begin{align*}
I_k&=\bigl[\frac{k-1}{n},\frac{k}{n}\bigr], \\
A_k&=\{ x\in X : \frac{k-2}{n} < g(e(x)) < \frac{k+1}{n} \}, \\
C_k&=g^{-1}(I_k).
\end{align*}
Note that $A_k\cap A_j=\emptyset $ whenever $k,j\in
\{1,\ldots ,n\}$ and $k+3\leq j$.

Let $k\in \{1,\ldots ,n\}$.
If $p\in C_k$, then $A_k\in p$ by Lemma \ref{filters_rset},
and so there exist a set $B_p\in p$ and a positive
function $f_p\in \mathcal{F}$ such that
$f_p(B_p)=\{k/n\}$, $f_p(X\setminus A_k)=\{0\}$, and
$f_p(X)\subseteq [0, k/n]$.
Pick $p_1,\ldots ,p_n\in C_k$ for some $n\in \mathbb{N}$ such
that $C_k\subseteq \bigcup _{j=1}^n \widehat{B}_{p_j}$ and
put $f_k=f_{p_1}\vee \ldots \vee f_{p_{n}}$.
Note that $f_k(X\setminus A_k)=\{0\}$ and $f_k(x)=k/n$ for every
$x\in X$ with $e(x)\in C_k$.

Put $f=f_1\vee \ldots \vee f_n$.
We claim that $\lVert \widehat{f}-g\rVert \leq r$.
To prove this, it is enough to show that $\lvert
f(x)-g(e(x))\rvert \leq r$ for every $x\in X$.
So, let $x\in X$.
If $g(e(x))\geq (n-3)/n$, then $e(x)\in C_k$ for some $k\in
\mathbb{N}$ with $n-2\leq k\leq n$,
and so $f(x)\geq (n-2)/n$.
Therefore, $\lvert f(x)-g(e(x))\rvert \leq r$.
On the other hand, if $g(e(x))<(n-3)/n$,
then there exists an element $k\in \{1,\ldots ,n-3\}$ such that $(k-1)/n\leq g(e(x))<k/n$.
Now, $x\in A_k$ and $e(x)\in C_k$, and so $f(x)\geq k/n$.
Since $A_k\cap A_j=\emptyset $ for every $j\in \{1,\ldots ,n\}$ with
$j\geq k+3$, we have $f_j(x)=0$ for every $k+3\leq j\leq n$,
and so $f(x)\leq (k+2)/n$.
Therefore, $\lvert f(x)-g(e(x))\rvert \leq 3/n\leq r$, thus finishing
the proof.
\end{proof}

Next, we show that $\mathcal{F}$-filters describe
all dense images of $X$ in compact Hausdorff spaces.
Precise statement and details follow.

\begin{theorem}
\label{filters_denseim}
Suppose that $Y$ is a compact Hausdorff space and $\varepsilon
:X\rightarrow Y$ is a function such that $\varepsilon (X)$ is dense in
$Y$.
The following statements hold:
\begin{enumerate}[\upshape (i)]
\item\label{filters_denseim_i}
The set $\mathcal{F}=\{ h\circ \varepsilon : h\in C(Y) \}$
is a $C^{\ast }$-subalgebra of $\ell ^{\infty }(X)$ containing the constant functions.
\item\label{filters_denseim_ii}
$\mathcal{F}$ is isometrically isomorphic with $C(Y)$.
\item\label{filters_denseim_iii}
There exists a homeomorphism $F:\delta X\rightarrow Y$
such that $F\circ e=\varepsilon $.
\end{enumerate}
\end{theorem}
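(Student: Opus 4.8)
The plan is to dispose of (\ref{filters_denseim_i}) and (\ref{filters_denseim_ii}) at once and then to build the homeomorphism in (\ref{filters_denseim_iii}) directly, using only Urysohn's Lemma and the facts about $\delta X$ already established. For (\ref{filters_denseim_i})--(\ref{filters_denseim_ii}): since $Y$ is compact, each $h\in C(Y)$ is bounded, so $\Psi(h):=h\circ \varepsilon $ defines a unital $\ast$-homomorphism $\Psi :C(Y)\to \ell ^{\infty }(X)$, and $\Psi $ is isometric because $\lVert h\circ \varepsilon \rVert _{\infty }=\sup _{y\in \varepsilon (X)}\lvert h(y)\rvert =\lVert h\rVert $ by density of $\varepsilon (X)$ in $Y$ and continuity of $\lvert h\rvert $. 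Hence $\Psi $ is injective, its range $\mathcal{F}$, being isometrically isomorphic to $C(Y)$, is complete and therefore a closed $\ast$-subalgebra of $\ell ^{\infty }(X)$ containing the constants, and $\Psi :C(Y)\to \mathcal{F}$ is an isometric $\ast$-isomorphism; this gives (\ref{filters_denseim_i}) and (\ref{filters_denseim_ii}).

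The core of (\ref{filters_denseim_iii}) is to attach to each $p\in \delta X$ a single point of $Y$. Given $p\in \delta X$, put $C_p=\bigcap _{A\in p}\text{cl}_Y(\varepsilon (A))$, which is non-empty since $Y$ is compact and $\{\text{cl}_Y(\varepsilon (A)):A\in p\}$ has the finite intersection property. Writing $\widehat{f}\in C(\delta X)$ for the extension of $f\in \mathcal{F}$ from Theorem \ref{filters_extension}, I would first check that $h(y)=\widehat{h\circ \varepsilon }(p)$ for every $y\in C_p$ and every $h\in C(Y)$: for fixed $r>0$, Lemma \ref{filters_rset} gives $A:=\{x\in X:\lvert \widehat{h\circ \varepsilon }(p)-h(\varepsilon (x))\rvert \leq r\}\in p$; since $\varepsilon (A)$ lies in the closed set $h^{-1}(\{z:\lvert z-\widehat{h\circ \varepsilon }(p)\rvert \leq r\})$, so does $\text{cl}_Y(\varepsilon (A))$, which contains $y$, and hence $\lvert h(y)-\widehat{h\circ \varepsilon }(p)\rvert \leq r$ for all $r>0$. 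Because $C(Y)$ separates points of the compact Hausdorff space $Y$ (Urysohn), $C_p$ is a singleton; set $F(p)$ equal to its unique element, so that
\[
h(F(p))=\widehat{h\circ \varepsilon }(p)\qquad (h\in C(Y),\; p\in \delta X).
\]

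From this identity the rest is routine. It gives $h(F(e(x)))=h(\varepsilon (x))$ for all $h$, so $F\circ e=\varepsilon $ by separation of points. For continuity at $p$, given an open $V\ni F(p)$ choose $h\in C(Y)$ with $h(F(p))=0$ and $h\equiv 1$ on $Y\setminus V$ (Urysohn); then $\{q\in \delta X:\lvert \widehat{h\circ \varepsilon }(q)\rvert <1\}$ is an open neighbourhood of $p$ carried into $V$ by $F$. Consequently $F(\delta X)$ is compact, hence closed in $Y$, and it contains the dense set $\varepsilon (X)$, so $F$ is onto. For injectivity, if $p\neq q$ take $A\in p\setminus q$ with $A\neq X$, pick $B\in p$ and $f_0=h\circ \varepsilon \in \mathcal{F}$ with $0\leq f_0\leq 1$, $f_0(B)=\{0\}$, $f_0(X\setminus A)=\{1\}$, and use Theorem \ref{filters_ultrafilter} (\ref{filters_ultrafilter_ii}) to find $C\in q$ with $f_0>1/3$ on $C$; then $\widehat{f_0}(p)=0$ while $\widehat{f_0}(q)\geq 1/3$, so the displayed identity forces $F(p)\neq F(q)$. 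A continuous bijection from the compact space $\delta X$ onto the Hausdorff space $Y$ is a homeomorphism, which completes (\ref{filters_denseim_iii}).

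I expect the construction and single-valuedness of $F$ — showing that $C_p$ collapses to one point and extracting the identity $h(F(p))=\widehat{h\circ \varepsilon }(p)$ — to be the main obstacle, since this is the one place where the topology of $Y$, Theorem \ref{filters_extension}, and Lemma \ref{filters_rset} must be combined. Once that identity is available, the verifications that $F$ is well defined, continuous, surjective, and injective are all short.
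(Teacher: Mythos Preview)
Your proof is correct and follows essentially the same route as the paper's: define $F(p)$ as the unique point of $\bigcap_{A\in p}\text{cl}_Y(\varepsilon(A))$, use Urysohn's Lemma together with Lemma~\ref{filters_rset} to see this intersection is a singleton, and then verify continuity and injectivity via functions $h\circ\varepsilon\in\mathcal{F}$. Your explicit isolation of the identity $h(F(p))=\widehat{h\circ\varepsilon}(p)$ is a tidy organizing device that the paper leaves implicit, and your injectivity argument uses the $\mathcal{F}$-filter definition and Theorem~\ref{filters_ultrafilter}~(\ref{filters_ultrafilter_ii}) directly rather than invoking Theorem~\ref{filters_isomorphic} as the paper does, but these are stylistic variations on the same proof.
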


\begin{proof}
We only prove statement (\ref{filters_denseim_iii}) and leave the easy verifications of
(\ref{filters_denseim_i}) and (\ref{filters_denseim_ii}) to the reader.
If $p\in \delta X$, then
\[
C=\bigcap _{A\in p} \text{cl}_Y(\varepsilon (A))
\]
is a non-empty subset of $Y$.
Suppose that $x,y\in C$ with $x\neq y$.
By Urysohn's Lemma, there exists a real-valued function $h\in C(Y)$
with $h(x)=0$ and $h(y)=1$.
Put $f=h\circ \varepsilon $
and $A=\{ x\in X : \widehat{f}(p)-1/3\leq
f(x)\leq \widehat{f}(p)+1/3 \}$.
Now, $A\in p$ by Lemma \ref{filters_rset}, so $x,y\in
\text{cl}_Y(\varepsilon (A))$, and so
$h(x),h(y)\in \text{cl}_{\mathbb{R}} (f(A))$.
Therefore, $\lvert h(x)-h(y)\rvert \leq 2/3$, a contradiction.

Since $C$ is a singleton, we obtain a function $F:\delta
X\rightarrow Y$.
Clearly, $F\circ e=\varepsilon $.
Since $e(X)$ and $\varepsilon (X)$ are dense in $\delta X$ and $Y$,
respectively, we need only to show that $F$ is injective and
continuous to finish the proof.

Let $p,q\in \delta X$ with $p\neq q$.
By Urysohn's Lemma, there exists a real-valued function
$g\in C(\delta X)$ such that $g(p)=0$ and $g(q)=1$.
Put $f=g\circ e$.
Then $f\in \mathcal{F}$ by Theorem \ref{filters_isomorphic}.
Put $A=\{ x\in X : f(x)\leq 1/3 \}$
and $B=\{ x\in X : f(x)\geq 2/3\}$.
Then $A\in p$ and $B\in q$ by Lemma \ref{filters_rset},
and so $F(p)\in \text{cl}_Y(\varepsilon (A))$ and $F(q)\in
\text{cl}_Y(\varepsilon (B))$.
By statement (\ref{filters_denseim_ii}), there exists a function $h\in C(Y)$
such that $f=h\circ \varepsilon $.
Then $h(F(p))\in \text{cl}_{\mathbb{R}}(f(A))$ and
$h(F(q))\in \text{cl}_{\mathbb{R}}(f(B))$,
and so $F(p)\neq F(q)$, as required.

To finish the proof, let $p\in \delta X$ and let $U$
be an open neighborhood of $F(p)$ in $Y$ with $U\neq Y$.
Again, there exists a real-valued function $h\in C(Y)$
such that $h(\widehat{f}(p))=0$ and $h(Y\setminus U)=\{1\}$.
Put $f=h\circ \varepsilon $.
Using the continuity of $h$ as above, we obtain $\widehat{f}(p)=0$,
and so $B=\{ x\in X : -1/2\leq f(x)\leq 1/2 \} \in p$ by Lemma
\ref{filters_rset}.
If $q\in \widehat{B}$, then
$h(F(q))\in [-1/2,1/2]$, and so $F(q)\in U$.
Therefore, $F$ is continuous at $p$.
\end{proof}

\section{Some relationships between $C^{\ast }$-subalgebras of $\ell
  ^{\infty }(X)$}
\label{sec:inclusions-etc}

Throughout this section, we assume that $\mathcal{F}_1$ and
$\mathcal{F}_2$ are $C^{\ast }$-subalgebras of $\ell ^{\infty }(X)$
containing the constant functions.
We denote by $\delta _1X$, and $\delta _2X$ the spaces of
$\mathcal{F}_1$-ultrafilters and $\mathcal{F}_2$-ultrafilters on $X$,
respectively.
Also, we denote by $e_1$ and $e_2$ the evaluation mappings from $X$ to
$\delta _1X$ and $\delta _2X$, respectively.
If $A\subseteq X$, then the notation $\widehat{A}$ is ambiguous.
However, it should be clear from the context whether we consider
$\widehat{A}$ as a subset of $\delta _1X$ or $\delta _2X$.
If $f\in \mathcal{F}_1\cap \mathcal{F}_2$,
then $f$ extends to both $\delta _1X$ and $\delta _2X$.
We denote these extension by $f^{\delta _1}$ and $f^{\delta _2}$,
respectively.

\begin{theorem}
\label{filters_Fincl}
The inclusion $\mathcal{F}_1\subseteq \mathcal{F}_2$ holds if and only
if  there exists a continuous, surjective mapping
$F:\delta _2X\rightarrow \delta _1X$ such that $e_1=F\circ e_2$.
\end{theorem}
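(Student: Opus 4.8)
The plan is to treat the two implications separately; one is short. Suppose first that a continuous map $F:\delta_2 X\rightarrow\delta_1 X$ with $e_1=F\circ e_2$ exists, and let $f\in\mathcal{F}_1$. By Theorem~\ref{filters_extension}, $f=f^{\delta_1}\circ e_1=(f^{\delta_1}\circ F)\circ e_2$, and $f^{\delta_1}\circ F$ is a member of $C(\delta_2 X)$. Hence, applying Theorem~\ref{filters_isomorphic} to $\mathcal{F}_2$, there is $g\in\mathcal{F}_2$ with $g^{\delta_2}=f^{\delta_1}\circ F$, and then $g=g^{\delta_2}\circ e_2=f$; thus $f\in\mathcal{F}_2$, and $\mathcal{F}_1\subseteq\mathcal{F}_2$. (Surjectivity of $F$ is not used here.)

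For the converse, assume $\mathcal{F}_1\subseteq\mathcal{F}_2$. The idea is to send each $p\in\delta_2 X$ to the $\mathcal{F}_1$-ultrafilter having the same values as $p$ on all of $\mathcal{F}_1$; precisely, I will show that for every $p\in\delta_2 X$ there is a unique $\mathcal{F}_1$-ultrafilter $q$ with $f^{\delta_1}(q)=f^{\delta_2}(p)$ for every $f\in\mathcal{F}_1$, and then set $F(p)=q$. For existence, put
\[
\mathcal{A}_p=\{ X(f,r) : f\in\mathcal{F}_1, \, r>0, \, f^{\delta_2}(p)=0 \}.
\]
By Lemma~\ref{filters_rset}, $X(f,r)\in p$ whenever $f\in\mathcal{F}_1$ and $f^{\delta_2}(p)=0$, so $\mathcal{A}_p\subseteq p$; hence $\mathcal{A}_p$ has the finite intersection property and each of its members is non-empty, so the functions appearing in $\mathcal{A}_p$ lie in $(\mathcal{F}_1)_0$. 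Thus $\mathcal{A}_p$ is an $\mathcal{F}_1$-family by Lemma~\ref{filters_fr-family}, and Lemma~\ref{filters_existence} provides an $\mathcal{F}_1$-ultrafilter $q$ with $\mathcal{A}_p\subseteq q$. For any $f\in\mathcal{F}_1$, the function $g$ given by $g(x)=f(x)-f^{\delta_2}(p)$ belongs to $\mathcal{F}_1$ and (since the extension is linear and fixes constants) satisfies $g^{\delta_2}(p)=0$; hence $X(g,r)\in\mathcal{A}_p\subseteq q$ for all $r>0$, and Theorem~\ref{filters_extension} forces $g^{\delta_1}(q)=0$, that is, $f^{\delta_1}(q)=f^{\delta_2}(p)$. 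For uniqueness, recall that $\{ f^{\delta_1} : f\in\mathcal{F}_1 \}=C(\delta_1 X)$ by Theorem~\ref{filters_isomorphic}, and this family separates the points of $\delta_1 X$; so at most one $q$ can realize the prescribed values. Therefore $F$ is well defined, and it satisfies the key identity $f^{\delta_1}\circ F=f^{\delta_2}$ on $\delta_2 X$ for every $f\in\mathcal{F}_1$.

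It remains to check the three properties of $F$, all of which follow from the key identity. First, for $x\in X$ we have $f^{\delta_1}(F(e_2(x)))=f^{\delta_2}(e_2(x))=f(x)=f^{\delta_1}(e_1(x))$ for every $f\in\mathcal{F}_1$, so $F(e_2(x))=e_1(x)$ by point separation, i.e. $e_1=F\circ e_2$. Second, for continuity at $p\in\delta_2 X$, let $\widehat{A}$ be a basic neighbourhood of $F(p)$ in $\delta_1 X$ with $A\neq X$, and choose $B\in F(p)$ and a real-valued $h\in\mathcal{F}_1$ with $h(X)\subseteq[0,1]$, $h(B)=\{0\}$, and $h(X\setminus A)=\{1\}$; then $h^{\delta_1}(F(p))=0$, and a short computation with Lemma~\ref{filters_rset} shows $\{ q'\in\delta_1 X : \lvert h^{\delta_1}(q')\rvert<1/2 \}\subseteq\widehat{A}$. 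By the key identity, the preimage of this set under $F$ is $\{ p'\in\delta_2 X : \lvert h^{\delta_2}(p')\rvert<1/2 \}$, an open neighbourhood of $p$ that $F$ maps into $\widehat{A}$. Third, for surjectivity, let $q\in\delta_1 X$; since $\mathcal{F}_1\subseteq\mathcal{F}_2$, the filter $q$ is also an $\mathcal{F}_2$-family with the finite intersection property, so by Lemma~\ref{filters_existence} it is contained in some $p\in\delta_2 X$. If $A\in q$ with $A\neq X$, choosing $B\in q$ and $h\in\mathcal{F}_1$ with $h(B)=\{0\}$ and $h(X\setminus A)=\{1\}$ gives $h^{\delta_2}(p)=0$ (because $B\in p$), hence $X(h,1/2)\in\mathcal{A}_p\subseteq F(p)$ and $X(h,1/2)\subseteq A$, so $A\in F(p)$; thus $q\subseteq F(p)$, and therefore $q=F(p)$ since an $\mathcal{F}_1$-ultrafilter contained in another must coincide with it.

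The main obstacle is establishing that $F$ is well defined — that $\mathcal{A}_p$ extends to a \emph{unique} $\mathcal{F}_1$-ultrafilter — and extracting from this the identity $f^{\delta_1}\circ F=f^{\delta_2}$ on $\mathcal{F}_1$; once that identity is available, the remaining assertions ($e_1=F\circ e_2$, continuity, and surjectivity) follow by the routine arguments indicated above.
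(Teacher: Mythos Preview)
Your proof is correct. The backward implication (existence of $F$ implies $\mathcal{F}_1\subseteq\mathcal{F}_2$) matches the paper's argument essentially verbatim.

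For the forward implication you take a genuinely different route from the paper. The paper defines $F(p)$ topologically, as the unique element of $\bigcap_{A\in p}\text{cl}_{\delta_1 X}(e_1(A))$, and then refers back to the proof of Theorem~\ref{filters_denseim} to check that this intersection is a singleton and that the resulting map is continuous. You instead characterize $F(p)$ functionally, as the unique $q\in\delta_1 X$ satisfying $f^{\delta_1}(q)=f^{\delta_2}(p)$ for all $f\in\mathcal{F}_1$, and you construct it by extending the $\mathcal{F}_1$-family $\mathcal{A}_p$ to an $\mathcal{F}_1$-ultrafilter. Your approach has the virtue of building the identity $f^{\delta_1}\circ F=f^{\delta_2}$ directly into the definition of $F$; the paper establishes this identity only afterward, as part of Theorem~\ref{filters_surjpts}. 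The paper's route, by contrast, is shorter on the page because it recycles the machinery of Theorem~\ref{filters_denseim}. The surjectivity arguments in the two proofs are essentially the same: both observe that any $q\in\delta_1 X$ is an $\mathcal{F}_2$-filter, extend it to some $p\in\delta_2 X$, and then verify $q\subseteq F(p)$.
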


\begin{proof}
Suppose first that $\mathcal{F}_1\subseteq \mathcal{F}_2$.
Let $p\in \delta _2X$ and put
\[
C=\bigcap _{A\in p} \text{cl}_{\delta _1X} (e_1(A)).
\]
Similar arguments as in the proof of Theorem \ref{filters_denseim}
apply to show that $C$ is a singleton,
and so we obtain a function $F:\delta _2X\rightarrow \delta _1X$.
Clearly, $e_1=F\circ e_2$.
Also, arguing as in the last part of the proof of Theorem
\ref{filters_denseim} we see that $F$ is continuous.
Therefore, we need only to show that $F$ is surjective.

If $q\in \delta _1X$, then $q$ is an $\mathcal{F}_2$-filter on $X$.
Pick any $p\in \delta _2X$ such that $q\subseteq p$.
Let $A\in q$.
Since $\delta _1X$ is a regular topological space, there exists a $\tau
(\mathcal{F}_1)$-open subset $B$ of $X$ such that $B\in q$ and
$\text{cl}_{\delta  _1X}(\widehat{B})\subseteq \widehat{A}$.
Now, $B\in p$, so $F(p)\in \text{cl}_{\delta _1X}(\widehat{B})$ by Lemma
\ref{filters_properties} (\ref{filters_properties_ii}), and so $A\in F(p)$.
Therefore, $q\subseteq F(p)$, and so $q=F(p)$, as required.

Suppose now that there exists a mapping $F:\delta _2X\rightarrow \delta
_1X$ as above.
Let $f\in \mathcal{F}_1$.
By Theorem \ref{filters_extension}, there exists a function
$g\in C(\delta _1X)$ with $f=g\circ e_1$,
and so $f=(g\circ F)\circ e_2$.
Since $g\circ F\in C(\delta _2X)$,
we have $f\in \mathcal{F}_2$ by Corollary \ref{filters_isomorphic}.
\end{proof}

For the proof of the next theorem,
recall the definition of $\widehat{f}$ from the
proof of Theorem \ref{filters_extension}.

\begin{theorem}
\label{filters_surjpts}
Suppose that $\mathcal{F}_1\subseteq \mathcal{F}_2$ and
let $F:\delta _2X\rightarrow \delta _1X$ be as in Theorem
\ref{filters_Fincl}.
If $p\in \delta _2X$ and $q\in \delta _1X$, then the following
statements are equivalent:
\begin{enumerate}[\upshape (i)]
\item\label{filters_surjpts_i}
$q\subseteq p$.
\item\label{filters_surjpts_ii}
$F(p)=q$.
\item\label{filters_surjpts_iii}
$f^{\delta _2}(p)=f^{\delta _1}(q)$ for every $f\in \mathcal{F}_1$.
\end{enumerate}
\end{theorem}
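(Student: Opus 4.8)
The plan is to prove the cyclic chain of implications (\ref{filters_surjpts_i}) $\Rightarrow$ (\ref{filters_surjpts_ii}) $\Rightarrow$ (\ref{filters_surjpts_iii}) $\Rightarrow$ (\ref{filters_surjpts_i}), which makes the three statements equivalent. Throughout, recall that $p$ is an $\mathcal{F}_2$-ultrafilter, $q$ is an $\mathcal{F}_1$-ultrafilter, and $F(p)$ is again an $\mathcal{F}_1$-ultrafilter, so that any inclusion $q\subseteq F(p)$ forces $q=F(p)$ by the basic property of $\mathcal{F}_1$-ultrafilters.

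For (\ref{filters_surjpts_i}) $\Rightarrow$ (\ref{filters_surjpts_ii}), I would simply rerun the surjectivity argument from the proof of Theorem \ref{filters_Fincl}, which in fact establishes this implication for \emph{any} $p\in\delta _2X$ with $q\subseteq p$. Given $A\in q$, use that $\delta _1X$ is compact Hausdorff, hence regular, together with Remark \ref{filters_interior} and Lemma \ref{filters_properties} (\ref{filters_properties_iii}), to pick a $\tau (\mathcal{F}_1)$-open set $B\in q$ with $\text{cl}_{\delta _1X}(\widehat{B})\subseteq \widehat{A}$. Since $q\subseteq p$, we have $B\in p$, so $F(p)\in \text{cl}_{\delta _1X}(\widehat{B})$ by the definition of $F$ and Lemma \ref{filters_properties} (\ref{filters_properties_ii}), whence $A\in F(p)$. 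Thus $q\subseteq F(p)$, and therefore $q=F(p)$.

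For (\ref{filters_surjpts_ii}) $\Rightarrow$ (\ref{filters_surjpts_iii}), fix $f\in \mathcal{F}_1$. By Theorem \ref{filters_extension} there is a unique $g\in C(\delta _1X)$ with $f=g\circ e_1$, and uniqueness forces $g=f^{\delta _1}$. Since $e_1=F\circ e_2$, we get $f=(g\circ F)\circ e_2$ with $g\circ F\in C(\delta _2X)$, so the uniqueness clause of Theorem \ref{filters_extension} applied to $\delta _2X$ gives $f^{\delta _2}=g\circ F=f^{\delta _1}\circ F$. Evaluating at $p$ and using $F(p)=q$ yields $f^{\delta _2}(p)=f^{\delta _1}(q)$.

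For (\ref{filters_surjpts_iii}) $\Rightarrow$ (\ref{filters_surjpts_i}), it suffices to show $A\in p$ for every $A\in q$; the case $A=X$ is trivial, so assume $A\neq X$. Since $q$ is an $\mathcal{F}_1$-filter, pick $B\in q$ and a real-valued $f\in \mathcal{F}_1$ with $f(X)\subseteq [0,1]$, $f(B)=\{0\}$, and $f(X\setminus A)=\{1\}$. From the construction of the extension in the proof of Theorem \ref{filters_extension}, $f^{\delta _1}(q)\in \text{cl}_{\mathbb{C}}(f(B))=\{0\}$, hence $f^{\delta _1}(q)=0$, and so $f^{\delta _2}(p)=0$ by (\ref{filters_surjpts_iii}). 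Applying the proof of Theorem \ref{filters_extension} to $\mathcal{F}_2$ and the function $f-f^{\delta _2}(p)=f$, we obtain $X(f,r)\in p$ for every $r>0$; in particular $X(f,1/2)\in p$. Since $f$ takes the value $1$ on $X\setminus A$, we have $X(f,1/2)\subseteq A$, so $A\in p$, and therefore $q\subseteq p$. The main obstacle is the first implication, where one must upgrade the membership $A\in p$ to a point of $\widehat{A}\subseteq\delta _1X$ lying in $F(p)$; this is exactly the regularity interpolation already carried out in Theorem \ref{filters_Fincl}. The remaining two implications are routine, resting only on the uniqueness part of Theorem \ref{filters_extension} and the auxiliary fact $X(f-f^{\delta}(p),r)\in p$ isolated in its proof.
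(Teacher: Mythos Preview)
Your proof is correct and follows essentially the same cycle as the paper. The only cosmetic difference is in (\ref{filters_surjpts_iii}) $\Rightarrow$ (\ref{filters_surjpts_i}): the paper argues by contrapositive, picking $A\in q\setminus p$ and using Theorem~\ref{filters_ultrafilter}\,(\ref{filters_ultrafilter_ii}) to produce a set $C\in p$ disjoint from $X(f,1/3)$, forcing $f^{\delta_2}(p)\geq 1/3\neq 0=f^{\delta_1}(q)$; your direct argument instead invokes the claim $X(f-\widehat{f}(p),r)\in p$ from the proof of Theorem~\ref{filters_extension} to place $A\supseteq X(f,1/2)$ in $p$---the same separating function does the work either way.
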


\begin{proof}
(i) $\Rightarrow $ (ii)
This was already proved in the proof of Theorem
\ref{filters_Fincl}.

(ii) $\Rightarrow $ (iii)
Suppose that $F(p)=q$.
Let $f\in \mathcal{F}_1$.
Since $e_1=F\circ e_2$, the functions $f^{\delta _2}$ and $f^{\delta
  _1}\circ F$ agree on $e_2(X)$.
Therefore, $f^{\delta _2}=f^{\delta _1}\circ F$, and so
$f^{\delta _2}(p) =f^{\delta _1}(q)$.

(iii) $\Rightarrow $ (i)
Suppose that $q$ is not contained in $p$.
Pick some set $A\in q\setminus p$.
Pick a set $B\in q$ and a positive function $f\in
\mathcal{F}_1$ such that $f(B)=\{0\}$ and $f(X\setminus A)=\{1\}$.
Then $f^{\delta _1}(q)=0$.
Since $X(f,1/2)\notin p$, there exists a set $C\in p$ such that
$X(f,1/3)\cap C=\emptyset $ by Theorem \ref{filters_ultrafilter}.
Then $f^{\delta _2}(p)\geq 1/3$, thus finishing the proof.
\end{proof}

Define two closed equivalence relations $\sim $ and $\approx $ on
$\delta _2X$ as follows:
$p\sim q$ if and only if $F(p)=F(q)$, and
$p\approx q$ if and only if $f^{\delta _2}(p)=f^{\delta _2}(q)$ for
every $f\in \mathcal{F}_1$.
Theorem \ref{filters_surjpts} shows that these relations are identical.
Since $F$ is a quotient mapping (see \cite[pp. 60-61]{Willard}),
we obtain the following statement.

\begin{corollary}
If $\mathcal{F}_1\subseteq \mathcal{F}_2$,
then the quotient space $\delta _2X/\approx $ is homeomorphic with
$\delta _1X$.
\end{corollary}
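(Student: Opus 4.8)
The plan is to deduce the homeomorphism from Theorem~\ref{filters_surjpts} together with the elementary fact that a quotient map identifies its codomain with the quotient of its domain by the ``same-fibre'' equivalence relation.

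First I would check that the map $F:\delta_2X\to\delta_1X$ furnished by Theorem~\ref{filters_Fincl} is a quotient map. It is continuous and surjective by that theorem; moreover $\delta_2X$ is compact and $\delta_1X$ is Hausdorff by Theorem~\ref{filters_compact}, so $F$ sends closed (hence compact) sets to compact (hence closed) sets. Thus $F$ is a closed, continuous surjection and therefore a quotient map (see \cite[pp.~60--61]{Willard}). Consequently, writing $\sim_F$ for the equivalence relation on $\delta_2X$ defined by $p\sim_F q$ if and only if $F(p)=F(q)$, the canonical bijection $\overline{F}:\delta_2X/\!\sim_F\ \longrightarrow\ \delta_1X$ induced by $F$ is a homeomorphism, since the quotient-map property says exactly that the topology of $\delta_1X$ is the one transported from $\delta_2X$ along $F$.

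Next I would observe that $\sim_F$ is precisely the relation $\sim$, and that by Theorem~\ref{filters_surjpts} the relations $\sim$ and $\approx$ coincide on $\delta_2X$: given $p,p'\in\delta_2X$, applying the implication (ii)$\Rightarrow$(iii) of Theorem~\ref{filters_surjpts} to $q:=F(p)$ shows that $F(p)=F(p')$ forces $f^{\delta_2}(p)=f^{\delta_2}(p')$ for every $f\in\mathcal{F}_1$, while the converse follows from (iii)$\Rightarrow$(ii) applied with the same $q$. Hence $\delta_2X/\!\approx\ =\ \delta_2X/\!\sim_F$ as topological spaces, and $\overline{F}$ is the desired homeomorphism onto $\delta_1X$.

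I do not expect any genuine obstacle: the only points needing brief attention are the verification that $F$ is closed---immediate from compactness of $\delta_2X$ and Hausdorffness of $\delta_1X$---and the remark that no comparison of quotient topologies is required, because the equivalence relations $\sim$ and $\approx$ are literally the same relation, so their quotient spaces are identical rather than merely homeomorphic.
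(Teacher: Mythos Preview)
Your proposal is correct and follows exactly the approach of the paper: the text preceding the corollary already notes that Theorem~\ref{filters_surjpts} makes $\sim$ and $\approx$ identical and that $F$ is a quotient mapping (citing \cite[pp.~60--61]{Willard}), from which the corollary is immediate. Your argument simply spells out the reason $F$ is a quotient map---closed continuous surjection from a compact space to a Hausdorff space---which is precisely what the Willard reference supplies.
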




\section{$\mathcal{F}$-filters and ideals of $\mathcal{F}$}
\label{sec:mathc-filt-ideals}

In this section, we establish a correspondence between
$\mathcal{F}$-filters on $X$ and ideals of $\mathcal{F}$.
Roughly speaking, we show how the ideals of $\mathcal{F}$ can be used
to generate $\mathcal{F}$-filters on $X$.
We apply the following convention for the rest of the paper:
\emph{By an ideal of $\mathcal{F}$, we always mean a closed,
  proper ideal of $\mathcal{F}$}.

The next lemma follows from \cite[(1.23) Proposition]{Folland}.
Since the proof of the cited proposition relies on the spectrums of
single elements of $C^{\ast }$-algebras,
we present the following short proof using only basic properties of
Banach algebras.

\begin{lemma}
\label{filters_inverse}
If $f\in \mathcal{F}\setminus \mathcal{F}_0$, then $1/f\in \mathcal{F}
$.
\end{lemma}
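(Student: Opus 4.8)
The plan is to use only that $\mathcal{F}$, being a norm-closed subalgebra of $\ell^{\infty}(X)$ containing the constants, is a commutative unital Banach algebra, and to produce $1/f$ by inverting a suitable positive element via a Neumann series. First I would unwind the hypothesis: $f\notin\mathcal{F}_{0}$ means $X(f,r)=\emptyset$ for some $r>0$, i.e.\ $\lvert f(x)\rvert>r$ for every $x\in X$. Writing $M=\lVert f\rVert$ (note $M>0$, since $X\neq\emptyset$), the values of $f$ lie in the annulus $\{z\in\mathbb{C}:r<\lvert z\rvert\le M\}$, so $1/f$ is at least a well-defined bounded function on $X$; the content is that it belongs to $\mathcal{F}$.

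One cannot apply a geometric series to $f$ itself (or to $1-cf$) directly, since the range of $f$ is an annulus and no disc $\{\,\lvert z-w\rvert<\lvert w\rvert\,\}$ contains it. So the first genuine step is to replace $f$ by a real, strictly positive element of $\mathcal{F}$. Put $h=\overline{f}\,f=\lvert f\rvert^{2}$; since $\mathcal{F}$ is a $\ast$-subalgebra we have $\overline{f}\in\mathcal{F}$, hence $h\in\mathcal{F}$, and $r^{2}\le h(x)\le M^{2}$ for every $x\in X$. Then choose the scalar $c=M^{2}$ and set $g=1-h/c\in\mathcal{F}$; for every $x\in X$ one has $0\le g(x)\le 1-r^{2}/M^{2}$, so $\lVert g\rVert\le 1-r^{2}/M^{2}<1$.

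The key step is the standard Banach-algebra fact that in a unital Banach algebra $1-g$ is invertible whenever $\lVert g\rVert<1$, with $(1-g)^{-1}=\sum_{n=0}^{\infty}g^{n}$ converging in norm; as $\mathcal{F}$ is norm-closed in $\ell^{\infty}(X)$, this inverse lies in $\mathcal{F}$. Applied to the above $g$, it gives $c/h=(1-g)^{-1}\in\mathcal{F}$, hence $1/h\in\mathcal{F}$. Finally $1/f=\overline{f}/(f\overline{f})=\overline{f}\cdot(1/h)$ is a product of two members of $\mathcal{F}$, so $1/f\in\mathcal{F}$, as required.

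I do not expect any serious obstacle; the only points needing a little care are the passage from the complex-valued $f$ to the positive element $\lvert f\rvert^{2}$ (so that a geometric series becomes applicable) and the choice of the normalising constant $c$ making $\lVert 1-h/c\rVert<1$.
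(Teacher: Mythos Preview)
Your proof is correct and follows essentially the same route as the paper: pass to the positive element $\lvert f\rvert^{2}=\overline{f}f$, normalise so that $\lVert 1-\text{(normalised element)}\rVert<1$, invoke the standard Neumann-series invertibility in a unital Banach algebra, and recover $1/f$ as $\overline{f}\cdot(1/\lvert f\rvert^{2})$. The only cosmetic difference is that the paper first isolates the positive case as a separate step before reducing the general case to it, whereas you go straight to $\lvert f\rvert^{2}$.
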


\begin{proof}
Suppose first that $f\in \mathcal{F}\setminus \mathcal{F}_0$ is
positive.
Pick $r>0$ such that $r\leq f(x)$ for every $x\in X$.
Then
\[
0<\frac{r}{\lVert f\rVert }\leq \frac{f(x)}{\lVert f\rVert }\leq 1
\]
for every $x\in X$.
Put $g=f/\lVert f\rVert $.
Then $\lVert 1-g\rVert <1$ by the inequalities above,
and so $g$ is invertible in $\mathcal{F} $ (see \cite[(1.3) Lemma]{Folland}).
Therefore, $1/f\in \mathcal{F} $.

If $f\in \mathcal{F}\setminus \mathcal{F}_0$ is any function,
then $\lvert f\rvert ^2\in \mathcal{F}\setminus
\mathcal{F}_0$ is positive.
The equality $1/f=\overline{f}/\lvert f\rvert ^2$
and the first part of the proof imply that $1/f\in \mathcal{F} $.
\end{proof}

\begin{corollary}
\label{filters_inf0}
If $I$ is an ideal of $\mathcal{F} $, then $I\subseteq \mathcal{F} _0$.
\end{corollary}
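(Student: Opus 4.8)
The plan is to argue by contradiction, with all the content supplied by Lemma \ref{filters_inverse}. Suppose $I$ is an ideal of $\mathcal{F}$ (closed and proper, per our standing convention) but $I\not\subseteq \mathcal{F}_0$. Then there is some $f\in I$ with $f\in \mathcal{F}\setminus \mathcal{F}_0$, so by Lemma \ref{filters_inverse} the function $1/f$ belongs to $\mathcal{F}$. Since $I$ is an ideal and $f\in I$, it follows that $1=f\cdot(1/f)\in I$; but then $g=g\cdot 1\in I$ for every $g\in \mathcal{F}$, so $I=\mathcal{F}$, contradicting the properness of $I$. Hence $I\subseteq \mathcal{F}_0$.

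There is essentially no obstacle here: the closedness of $I$ is not used, and the only inputs are that $\mathcal{F}$ is unital and that, by the contrapositive of Lemma \ref{filters_inverse}, every function in $\mathcal{F}$ that is bounded away from zero is invertible in $\mathcal{F}$. One may equally well phrase the argument directly rather than by contradiction: if $f\in I$ then $f$ is not invertible in $\mathcal{F}$ (else $1\in I$ and $I=\mathcal{F}$), and the contrapositive of Lemma \ref{filters_inverse} then gives $f\in \mathcal{F}_0$.
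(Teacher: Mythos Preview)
Your proof is correct and is precisely the intended argument: the paper states this as an immediate corollary of Lemma \ref{filters_inverse} without further proof, and the standard one-line deduction you give (if $f\in I\setminus\mathcal{F}_0$ then $1=f\cdot(1/f)\in I$, contradicting properness) is exactly what is meant.
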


\begin{definition}
For every ideal $I$ of $\mathcal{F}$,
define
\[
\mathcal{B}(I)=\{ X(f,r) : f\in I , \, r>0 \}.
\]
\end{definition}

\begin{theorem}
\label{filters_filtcond}
If $\varphi $ is an $\mathcal{F}$-filter on $X$,
then there exists an ideal $I$ of $\mathcal{F}$ such that $\varphi
$ is generated by $\mathcal{B}(I)$.
Conversely, if $I$ is an ideal of $\mathcal{F}$, then $\mathcal{B}(I)$
is a filter base on $X$ and the filter $\varphi $ on $X$ generated by
$\mathcal{B}(I)$ is an $\mathcal{F}$-filter.
\end{theorem}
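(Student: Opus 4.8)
The plan is to prove the converse statement first, so that its conclusion (that $\mathcal{B}(I)$ is a filter base) is available for the direct statement. For the converse, suppose $I$ is an ideal of $\mathcal{F}$. Since $0\in I$, the set $X=X(0,1)$ lies in $\mathcal{B}(I)$, and by Corollary \ref{filters_inf0} we have $I\subseteq\mathcal{F}_0$, so $X(f,r)\neq\emptyset$ for every $f\in I$ and $r>0$; thus $\mathcal{B}(I)$ is non-empty and does not contain $\emptyset$. For directedness, given $f,g\in I$ one uses the function $h=f\overline{f}+g\overline{g}$, which lies in $I$ (here $\mathcal{F}$ being a $\ast$-algebra and $I$ absorbing $\mathcal{F}$ are used), is positive, and satisfies $X(h,t^2)\subseteq X(f,t)\cap X(g,t)$ for every $t>0$; choosing $t$ no larger than the two radii in question yields the needed member of $\mathcal{B}(I)$. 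Hence $\mathcal{B}(I)$ is a filter base; let $\varphi$ denote the filter it generates. To see that $\varphi$ is an $\mathcal{F}$-family, take $A\in\varphi$ with $A\neq X$ and $f\in I$, $r>0$ with $X(f,r)\subseteq A$; replacing $f$ by $f\overline{f}$ and $r$ by $r^2$ (note $X(f\overline{f},r^2)=X(f,r)$) we may assume $f$ positive. Then $B:=X(f,r/2)\in\varphi$, with $f(x)\leq r/2$ for $x\in B$, while $X\setminus A\subseteq X\setminus X(f,r)$ gives $f(x)>r$ for $x\in X\setminus A$; so Remark \ref{filters_Ffamrem} applies with the separating constants $r/2<r$, and $\varphi$ is an $\mathcal{F}$-filter.

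For the direct statement, given an $\mathcal{F}$-filter $\varphi$ I would take
\[
I=\{\, f\in\mathcal{F} : X(f,r)\in\varphi \text{ for every } r>0 \,\}.
\]
The main work is to verify that $I$ is a closed, proper ideal of $\mathcal{F}$: closure under addition follows from $X(f,r/2)\cap X(g,r/2)\subseteq X(f+g,r)$; closure under scalar multiplication is immediate; absorption of $\mathcal{F}$ follows from $X(f,r/\lVert g\rVert)\subseteq X(fg,r)$ when $\lVert g\rVert>0$ (and is trivial otherwise); norm-closedness follows because $\lVert f_n-f\rVert\leq r/2$ forces $X(f_n,r/2)\subseteq X(f,r)$; and properness holds because $X(1,1/2)=\emptyset\notin\varphi$, so the constant function $1$ is not in $I$. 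It then remains to see that $\varphi$ is generated by $\mathcal{B}(I)$. By the definition of $I$ we have $\mathcal{B}(I)\subseteq\varphi$, so, by upward closure of $\varphi$, every member of the filter generated by $\mathcal{B}(I)$ lies in $\varphi$. Conversely, if $A\in\varphi$ with $A\neq X$, the $\mathcal{F}$-family property of $\varphi$ yields $B\in\varphi$ and a positive $f\in\mathcal{F}$ with values in $[0,1]$, $f(B)=\{0\}$, and $f(X\setminus A)=\{1\}$; then $B\subseteq Z(f)\subseteq X(f,r)$ for all $r>0$ shows $f\in I$, and $X(f,1/2)\subseteq A$ exhibits a member of $\mathcal{B}(I)$ inside $A$ (the case $A=X$ being handled by $X(0,1)=X$). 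Therefore $\mathcal{B}(I)$ is a filter base for $\varphi$, which is exactly the assertion that $\varphi$ is generated by $\mathcal{B}(I)$.

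The step I expect to be the main obstacle is the verification, in the direct direction, that $I$ genuinely is a norm-closed, proper ideal: each of the set inclusions above is routine but has to be written out with the correct choice of radii, and one must also remember, in the converse direction, to use that $\mathcal{F}$ is closed under complex conjugation, so that $f\overline{f}\in\mathcal{F}$ and hence $f\overline{f}\in I$.
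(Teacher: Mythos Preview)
Your proposal is correct and follows essentially the same approach as the paper: the same candidate ideal $I=\{f\in\mathcal{F}:X(f,r)\in\varphi\text{ for all }r>0\}$, the same radius-halving inclusions to check that $I$ is a closed ideal, the same use of $|f|^2+|g|^2\in I$ for directedness of $\mathcal{B}(I)$, and the same witness $X(f,1/2)\subseteq A$ to show $\mathcal{B}(I)$ generates $\varphi$. The only cosmetic differences are that you treat the converse first, you verify the $\mathcal{F}$-family property of the generated filter directly via Remark~\ref{filters_Ffamrem} rather than citing Lemma~\ref{filters_fr-family}, and your inclusion $X(h,t^2)\subseteq X(f,t)\cap X(g,t)$ is stated with the correct radius (the paper writes $X(|f|^2+|g|^2,r)\subseteq X(f,r)\cap X(g,r)$, which is slightly sloppy but immaterial).
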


\begin{proof}
Suppose first that $\varphi $ is an $\mathcal{F}$-filter on $X$.
Put
\[
I=\{ f\in \mathcal{F} : X(f,r)\in \varphi \text{ for every } r>0
\}.
\]
Clearly, $0\in I$.
Let $f_1,f_2\in I$, let $h\in \mathcal{F}$ with $h\neq 0$, let
$\alpha \in \mathbb{C}$ with $\alpha \neq 0$, let $(g_n)$
be a sequence in $I$ which converges to some $g\in \mathcal{F}$,
and let $r>0$.
The inclusions
\begin{align*}
X(f_1,r/2)\cap X(f_2,r/2)&\subseteq X(f_1-f_2,r), \\
X(f_1,r/\lVert h\rVert )&\subseteq X(f_1h,r), \\
X(f_1,r/\lvert \alpha \rvert )& \subseteq X(\alpha f_1,r), \\
X(g_n,r/2) & \subseteq X(g,r),
\end{align*}
where the last one holds if $\lVert g_n-g\rVert \leq r/2$,
imply that $I$ is an ideal of $\mathcal{F}$.

We claim that $\mathcal{B}(I)$ is a filter base for $\varphi $.
Clearly, $\mathcal{B}(I)\subseteq \varphi $, so
let $A\in \varphi $ with $A\neq X$.
Pick a set $B\in \varphi $ and a function $f\in \mathcal{F}$ with
$f(B)=\{0\}$ and $f(X\setminus A)=\{1\}$.
Since $B\subseteq X(f,r)$ for every $r>0$, we have $f\in I$.
Since $X(f,1/2)\subseteq A$, the claim follows.

Suppose now that $I$ is an ideal of $\mathcal{F}$.
First, $X(f,r)\neq \emptyset $ for every $f\in I$ and for every $r>0
$ by Corollary \ref{filters_inf0}.
Next, let $f,g\in I$ and let $r>0$.
Since $\lvert f\rvert ^2+\lvert g\rvert ^2\in I$ and
$X(\lvert f\rvert ^2+\lvert g\rvert ^2,r)\subseteq X(f,r)\cap X(g,r)$,
the set $\mathcal{B}(I)$ is a filter base on $X$.
Since $\mathcal{B}(I)$ is an $\mathcal{F}$-family on $X$ by Lemma
\ref{filters_fr-family}, the filter $\varphi $ on $X$ generated by
$\mathcal{B}(I)$ is an $\mathcal{F}$-filter.
\end{proof}

\begin{theorem}
\label{filters_3cond}
Let $I$ be an ideal of $\mathcal{F}$,
let $\varphi $ be the $\mathcal{F}$-filter on $X$
generated by $\mathcal{B}(I)$,
and let $f\in \mathcal{F}$.
The following statements are equivalent:
\begin{enumerate}[\upshape (i)]
\item\label{filters_3cond_i}
$f\in I$.
\item\label{filters_3cond_ii}
$\widehat{f}(p)=0$ for every $p\in \overline{\varphi }$.
\item\label{filters_3cond_iii}
$X(f,r)\in \varphi $ for every $r>0$.
\end{enumerate}
\end{theorem}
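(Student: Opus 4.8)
The plan is to prove the cycle (i) $\Rightarrow$ (ii) $\Rightarrow$ (iii) $\Rightarrow$ (i), after first recording that $\overline{\varphi}=\widehat{\varphi}$ by Theorem \ref{filters_filtclos}, so that in every statement ``$p\in\overline{\varphi}$'' simply means that $p$ is an $\mathcal{F}$-ultrafilter with $\varphi\subseteq p$. The two easy implications use only facts already available. For (i) $\Rightarrow$ (ii): if $f\in I$, then $X(f,r)\in\mathcal{B}(I)\subseteq\varphi\subseteq p$ for every $p\in\overline{\varphi}$ and every $r>0$; recalling from the proof of Theorem \ref{filters_extension} that $\widehat{f}(p)\in\text{cl}_{\mathbb{C}}(f(X(f,r)))$ while $\lvert f\rvert\le r$ on $X(f,r)$, this forces $\lvert\widehat{f}(p)\rvert\le r$ for all $r>0$, hence $\widehat{f}(p)=0$. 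For (ii) $\Rightarrow$ (iii): if $\widehat{f}$ vanishes on $\widehat{\varphi}$, then for each $p\in\widehat{\varphi}$ and $r>0$, Lemma \ref{filters_rset} applied to $\widehat{f}\in C(\delta X)$, together with $\widehat{f}\circ e=f$, gives $X(f,r)=\{x\in X:\lvert\widehat{f}(p)-\widehat{f}(e(x))\rvert\le r\}\in p$, so $X(f,r)\in\bigcap_{p\in\widehat{\varphi}}p=\varphi$ by Theorem \ref{filters_filtprop} (\ref{filters_filtprop_ii}).

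The substance is (iii) $\Rightarrow$ (i), and the strategy there is to approximate $f$ in the supremum norm by elements of $I$ and then use that $I$ is closed. Fix $\eta>0$. By (iii), $X(f,\eta)\in\varphi$, and since $\varphi$ is generated by the filter base $\mathcal{B}(I)$, there are $g_0\in I$ and $s_0>0$ with $X(g_0,s_0)\subseteq X(f,\eta)$; passing to the positive function $g=\lvert g_0\rvert^{2}\in I$ and $s=s_0^{2}$ keeps the inclusion $X(g,s)\subseteq X(f,\eta)$. The crucial observation is that $ng+s\ge s>0$ pointwise for every $n\in\mathbb{N}$, so $ng+s\in\mathcal{F}\setminus\mathcal{F}_0$ and Lemma \ref{filters_inverse} produces $(ng+s)^{-1}\in\mathcal{F}$; hence $h:=f\cdot ng\cdot(ng+s)^{-1}\in I$ because $ng\in I$. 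Now $f-h=f\cdot s\cdot(ng+s)^{-1}$ with $0<s(ng+s)^{-1}\le 1$, and one checks $\lvert f-h\rvert\le\lVert f\rVert/(n+1)$ at points where $g>s$ and $\lvert f-h\rvert\le\lvert f\rvert\le\eta$ at points where $g\le s$ (since there $x\in X(g,s)\subseteq X(f,\eta)$). Choosing $n$ with $\lVert f\rVert\le n\eta$ gives $\lVert f-h\rVert\le\eta$; letting $\eta\to 0$ and using that $I$ is closed yields $f\in I$.

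I expect the only genuine obstacle to be locating the approximating element in the final step: once one recognizes that $ng\cdot(ng+s)^{-1}$ is a one-parameter approximate unit for the ideal generated by $g$, made available inside $\mathcal{F}$ precisely by Lemma \ref{filters_inverse}, the estimates are routine. An alternative would be to take $f\cdot\bigl((g/s)\wedge 1\bigr)$, using the lattice operations in $\mathcal{F}$ and the fact that closed ideals of $\mathcal{F}$ are order-hereditary, but I would prefer the first version since it needs nothing beyond Lemma \ref{filters_inverse} and the closedness of $I$.
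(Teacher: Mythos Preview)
Your proof is correct and follows the same cycle (i) $\Rightarrow$ (ii) $\Rightarrow$ (iii) $\Rightarrow$ (i) as the paper, with the first two implications handled identically (same citations of Lemma \ref{filters_rset}, Theorem \ref{filters_filtprop} (\ref{filters_filtprop_ii}), and the description of $\widehat{f}$ from Theorem \ref{filters_extension}).

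The only difference is the specific multiplier used in (iii) $\Rightarrow$ (i). Both arguments pick $h\in I$ with $X(h,\cdot)\subseteq X(f,\eta)$ and then approximate $f$ by $fk$ for some $k\in I$ with $0\le k\le 1$ and $k=1$ (or nearly $1$) outside $X(f,\eta)$; both invoke Lemma \ref{filters_inverse} to produce the needed inverse in $\mathcal{F}$. The paper takes $k=\lvert h\rvert^{2}/(\lvert h\rvert\vee 1)^{2}$, which equals $1$ exactly where $\lvert h\rvert\ge 1$, so a single $k$ already gives $\lVert f-fk\rVert\le\eta$; this uses the lattice operation $\vee$. Your choice $k=ng/(ng+s)$ is the standard approximate-unit construction and is purely algebraic, at the cost of an extra parameter $n$ to push $k$ close to $1$ where $g>s$. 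Your mentioned alternative $f\cdot\bigl((g/s)\wedge 1\bigr)$ is in fact closer in spirit to what the paper does. All three variants yield the same estimate, so this is a cosmetic rather than a structural difference.
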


\begin{proof}
(i) $\Rightarrow $ (ii)
Suppose that $f\in I$.
Let $p\in \overline{\varphi }$ and let $r>0$.
Since $\varphi $ is generated by $\mathcal{B}(I)$, we have $X(f,r)\in p$,
and so $\lvert \widehat{f}(p)\rvert \leq r$ by Lemma
\ref{filters_closure}.
Therefore, $\widehat{f}(p)=0$.

(ii) $\Rightarrow $ (iii)
This follows from Lemma \ref{filters_rset} and Theorem
\ref{filters_filtprop} (\ref{filters_filtprop_ii}).

(iii) $\Rightarrow $ (i)
Suppose that (\ref{filters_3cond_iii}) holds.
We need only to show that $f\in \text{cl}_{\mathcal{F}}(I)$, and so
we may assume that $f\neq 0$.
Let $0<r<\lVert f\rVert $.
Then $X(f,r)\neq X$.
Since $\varphi $ is generated by $\mathcal{B}(I)$,
there exist some functions $h\in I$ and $g\in \mathcal{F}$ such that
$g(X(h,1))=\{0\}$ and $g(X\setminus X(f,r))=\{1\}$.
Now,
$1/(\lvert h\rvert
\vee 1)^2\in \mathcal{F}$ by Lemma \ref{filters_inverse},
so $k:=\lvert h\rvert ^2/(\lvert h\rvert \vee 1)^2\in I$,
and so $fk\in I$.
Note that $k(X)\subseteq [0,1]$.
Since $X(h,1)\subseteq X(f,r)$, the functions $f$ and $fk$ agree on
$X\setminus X(f,r)$.
Therefore,
$\lVert f-fk\rVert =\sup _{x\in X(f,r)} \lvert f(x)(1-k(x)) \rvert
\leq r$, and so $f\in \text{cl}_{\mathcal{F}}(I)$, as required.
\end{proof}

Let $I$ be an ideal of $\mathcal{F}$.
The equalities $X(f,r)=X(\lvert f\rvert ,r)=X(\overline{f},r)$ for every $f\in
\mathcal{F}$ and for every $r>0$ imply that $\lvert
f\rvert \in I$ and $\overline{f}\in I$ for every $f\in I$.

Theorems \ref{filters_filtcond} and \ref{filters_3cond} imply that,
for every $\mathcal{F}$-filter $\varphi $ on $X$,
there exists a unique ideal $I$ of $\mathcal{F}$ such that
$\varphi $ is generated by $\mathcal{B}(I)$.
If $I$ and $J$ are ideals of $\mathcal{F}$
such that $I\subseteq J$,
then $\mathcal{B}(I)\subseteq \mathcal{B}(J)$.
From this we conclude the following:
If $I$ is an ideal of $\mathcal{F}$ and $\varphi $ is the
$\mathcal{F}$-filter on $X$ generated by $\mathcal{B}(I)$,
then $I$ is a maximal ideal of $\mathcal{F}$ if and only if
$\varphi $ is an $\mathcal{F}$-ultrafilter on $X$.
The following well-known property of $\mathcal{F}$
follows from Theorem
\ref{filters_filtprop} (\ref{filters_filtprop_ii}).

\begin{corollary}
If $I$ is an ideal of $\mathcal{F}$, then $I$ is the intersection of
all of those maximal ideals of $\mathcal{F}$ which contain $I$.
\end{corollary}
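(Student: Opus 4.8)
The plan is to deduce this directly from the correspondence established in Theorems \ref{filters_filtcond} and \ref{filters_3cond} together with the observation (made in the paragraph preceding the corollary) that maximal ideals of $\mathcal{F}$ correspond to $\mathcal{F}$-ultrafilters. First I would fix an ideal $I$ of $\mathcal{F}$ and let $\varphi$ be the $\mathcal{F}$-filter on $X$ generated by $\mathcal{B}(I)$. By Theorem \ref{filters_3cond}, a function $f\in\mathcal{F}$ lies in $I$ if and only if $\widehat{f}(p)=0$ for every $p\in\overline{\varphi}=\widehat{\varphi}$; in other words, $I$ is precisely the set of $f\in\mathcal{F}$ vanishing on the closed set $\widehat{\varphi}$.

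Next I would identify the maximal ideals containing $I$. By the remark just before the corollary, the maximal ideals $M$ of $\mathcal{F}$ are exactly the ideals whose associated $\mathcal{F}$-filter is an $\mathcal{F}$-ultrafilter $p$, i.e. $M=\{f\in\mathcal{F}:\widehat{f}(p)=0\}$ for some $p\in\delta X$. Moreover $I\subseteq M$ translates, via $\mathcal{B}(I)\subseteq\mathcal{B}(M)$ and the generation statement, into $\varphi\subseteq p$, that is $p\in\widehat{\varphi}$. So the maximal ideals containing $I$ are exactly the ideals $M_p=\{f:\widehat{f}(p)=0\}$ for $p\in\widehat{\varphi}$.

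Finally I would compute the intersection. We have
\[
\bigcap_{p\in\widehat{\varphi}}M_p=\{f\in\mathcal{F}:\widehat{f}(p)=0\text{ for all }p\in\widehat{\varphi}\},
\]
and by Theorem \ref{filters_3cond} (the equivalence of (i) and (ii), recalling $\overline{\varphi}=\widehat{\varphi}$ from Theorem \ref{filters_filtclos}) this set is exactly $I$. Hence $I=\bigcap\{M:M\text{ maximal ideal of }\mathcal{F},\ I\subseteq M\}$.

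The only genuine point requiring care is the bookkeeping in the second step: one must make sure that the passage between an ideal and its $\mathcal{F}$-filter is inclusion-preserving in \emph{both} directions, so that "$M$ maximal and $I\subseteq M$" really is equivalent to "$p\in\widehat{\varphi}$" with no spurious ideals or ultrafilters slipping in. This is handled by the uniqueness clause of Theorems \ref{filters_filtcond} and \ref{filters_3cond} (each $\mathcal{F}$-filter comes from a unique ideal) combined with Theorem \ref{filters_filtprop} (\ref{filters_filtprop_iii}), so there is no real obstacle—the corollary is essentially a restatement of Theorem \ref{filters_3cond} once the maximal ideals have been parametrised by points of $\delta X$.
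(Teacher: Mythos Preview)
Your proposal is correct and follows essentially the same approach as the paper. The paper's proof is the one-line remark that the corollary ``follows from Theorem \ref{filters_filtprop} (\ref{filters_filtprop_ii})'', which, once unpacked through the ideal/filter correspondence of Theorems \ref{filters_filtcond} and \ref{filters_3cond} and the maximal-ideal/ultrafilter identification in the paragraph preceding the corollary, is exactly the argument you wrote out; you pivot on the equivalence (i)$\Leftrightarrow$(ii) of Theorem \ref{filters_3cond} (via $\widehat{f}(p)=0$) whereas the paper's citation points more naturally to (i)$\Leftrightarrow$(iii) combined with $\varphi=\bigcap_{p\in\widehat{\varphi}}p$, but this is only a cosmetic difference.
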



\begin{remark}
\label{filters_maxideal}
Let $\Delta $ be the spectrum of $\mathcal{F}$.
We consider $\Delta $ as the space of all non-zero, multiplicative linear
functionals on $\mathcal{F}$.
The \emph{evaluation mapping} $\varepsilon :X\rightarrow \Delta $ is
defined by $[\varepsilon (x)](f)=f(x)$ for every $x\in X$ and for
every $f\in \mathcal{F}$.
If $x\in X$, then the $\mathcal{F}$-filter on $X$ generated by
$\mathcal{B}(\ker \varepsilon (x))$ is $\mathcal{N}_{\mathcal{F}}(x)$
by Lemma \ref{filters_Nxbase},
By Theorem \ref{filters_denseim},
the mapping $\mu \mapsto p_{\mu }$ from $\Delta $
to $\delta X$,
where $p_{\mu }$ is the
$\mathcal{F}$-ultrafilter on $X$ generated by $\mathcal{B}(\ker \mu )$,
is a homeomorphism.
\end{remark}

\section{$\mathcal{F}$-filters on topological spaces}
\label{sec:mathc-filt-topol}

In the previous sections,
we made no assumption about any kind of structure on the set $X$.
In this section, we assume that $(X,\tau )$ is a Hausdorff topological space and
$\mathcal{F}\subseteq C(X)$.

Recall that $A^{\circ }$ denotes the $\tau (\mathcal{F})$-interior of
a subset $A$ of $X$.
If $A\subseteq X$, then $e^{-1}(\widehat{A}) = A^{\circ }$.
Since $\mathcal{F}\subseteq C(X)$, the set $A^{\circ }$
is open in $X$, and so the evaluation mapping $e:X\rightarrow \delta
X$ is continuous.
For every element $x\in X$,  we denote by $\mathcal{N}(x)$ the
neighborhood filter of $x$ in $(X,\tau )$.
Since $\mathcal{F}\subseteq C(X)$, we have
$\mathcal{N}_{\mathcal{F}}(x)\subseteq \mathcal{N}(x)$ for every $x\in
X$.

\emph{For the rest of this section, all the topological properties on
  $X$ or on its subsets are taken with respect to the original topology
  $\tau $ of $X$.}

Next theorem follows from Theorem \ref{filters_denseim}.

\begin{theorem}
\label{filters_contdenseim}
If $Y$ is a compact Hausdorff space and $\varepsilon:X\rightarrow Y$
is a continuous mapping such that $\varepsilon (X)$ is dense in
$Y$,
then the following statements hold:
\begin{enumerate}[(i)]
\item\label{filters_contdenseim_i}
The set $\mathcal{F}=\{ h\circ \varepsilon : h\in C(Y) \}$
is a $C^{\ast }$-subalgebra of $C(X)$ containing the constant
functions.
\item\label{filters_contdenseim_ii}
$\mathcal{F}$ is isometrically isomorphic to $C(Y)$.
\item\label{filters_contdenseim_iii}
There exists a homeomorphism $F:\delta X\rightarrow Y$
such that $F\circ e=\varepsilon $.
\end{enumerate}
\end{theorem}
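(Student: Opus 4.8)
The plan is to derive Theorem~\ref{filters_contdenseim} directly from Theorem~\ref{filters_denseim}. The only new hypothesis here compared to Theorem~\ref{filters_denseim} is that $X$ carries a Hausdorff topology $\tau$, that $\mathcal{F}\subseteq C(X)$, and that $\varepsilon$ is continuous; the conclusions to be proved are literally the same three statements. So the strategy is: first observe that Theorem~\ref{filters_denseim} applies verbatim to the function $\varepsilon:X\to Y$ (it only requires $Y$ compact Hausdorff and $\varepsilon(X)$ dense, with no topology on $X$ at all), which immediately yields that $\mathcal{F}=\{h\circ\varepsilon : h\in C(Y)\}$ is a $C^{\ast}$-subalgebra of $\ell^{\infty}(X)$ containing the constants, that $\mathcal{F}$ is isometrically isomorphic to $C(Y)$, and that there is a homeomorphism $F:\delta X\to Y$ with $F\circ e=\varepsilon$.

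The remaining work is purely to upgrade $\ell^{\infty}(X)$ to $C(X)$ in statement~(\ref{filters_contdenseim_i}), i.e.\ to check that every member of $\mathcal{F}$ is continuous with respect to $\tau$. This is where the continuity of $\varepsilon$ is used: if $h\in C(Y)$, then $h\circ\varepsilon$ is a composition of continuous maps $X\xrightarrow{\varepsilon}Y\xrightarrow{h}\mathbb{C}$, hence continuous on $(X,\tau)$. Thus $\mathcal{F}\subseteq C(X)$, and since $C(X)$ is itself a $C^{\ast}$-subalgebra of $\ell^{\infty}(X)$ (noting members of $\mathcal{F}$ are bounded because $Y$ is compact, so $h$ is bounded), $\mathcal{F}$ is a $C^{\ast}$-subalgebra of $C(X)$ containing the constant functions. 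Statements~(\ref{filters_contdenseim_ii}) and~(\ref{filters_contdenseim_iii}) are then exactly what Theorem~\ref{filters_denseim}(\ref{filters_denseim_ii}) and~(\ref{filters_denseim_iii}) give, with nothing further to prove.

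There is essentially no obstacle here: the theorem is a corollary obtained by restricting the ambient algebra from $\ell^{\infty}(X)$ to $C(X)$, and the single extra ingredient needed for that restriction—continuity of each $h\circ\varepsilon$—follows at once from the continuity of $\varepsilon$. The mild care needed is only bookkeeping: confirming that $C(X)$, viewed as the continuous bounded functions, is closed under the $C^{\ast}$-operations and contains the constants (so that ``$C^{\ast}$-subalgebra of $C(X)$'' makes sense), and recalling from the discussion preceding this section that, because $\mathcal{F}\subseteq C(X)$, each set $A^{\circ}$ is $\tau$-open and $e:X\to\delta X$ is $\tau$-continuous—though these facts are not even strictly needed for the statement as phrased. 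Hence the proof is a short deduction rather than a construction.

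\begin{pff}
Apply Theorem~\ref{filters_denseim} to the function $\varepsilon:X\to Y$; no topology on $X$ is needed for that theorem, so all three of its conclusions hold here. This already gives~(\ref{filters_contdenseim_ii}) and~(\ref{filters_contdenseim_iii}) and shows that $\mathcal{F}=\{h\circ\varepsilon : h\in C(Y)\}$ is a $C^{\ast}$-subalgebra of $\ell^{\infty}(X)$ containing the constant functions. It remains only to observe that $\mathcal{F}\subseteq C(X)$: if $h\in C(Y)$, then $h\circ\varepsilon$ is continuous on $(X,\tau)$, being the composition of the continuous maps $\varepsilon$ and $h$. Since $C(X)$ is a $C^{\ast}$-subalgebra of $\ell^{\infty}(X)$ containing the constants, $\mathcal{F}$ is a $C^{\ast}$-subalgebra of $C(X)$ containing the constant functions, which proves~(\ref{filters_contdenseim_i}).
\end{pff}
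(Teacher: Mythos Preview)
Your proof is correct and matches the paper's approach exactly: the paper simply states that the theorem follows from Theorem~\ref{filters_denseim}, and your argument spells out the one additional observation needed, namely that continuity of $\varepsilon$ forces $\mathcal{F}\subseteq C(X)$.
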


The evaluation mapping $e:X\rightarrow \delta X$ is an embedding if
and only if the equality $\mathcal{N}(x)=\mathcal{N}_{\mathcal{F}}(x)$
holds for every $x\in X$.
By Remark \ref{filters_fr-family}, the latter statement is equivalent
to statement (\ref{filters_embedding_ii}) below.

\begin{lemma}
\label{filters_embedding}
The following statements are equivalent:
\begin{enumerate}[\upshape (i)]
\item The canonical mapping $e:X\rightarrow \delta X$ is an embedding.
\item\label{filters_embedding_ii}
For every element $x\in X$ and for every neighborhood
$A\in \mathcal{N}(x)$ with $A\neq X$,
there exists a function $f\in \mathcal{F}$ with
$f(x)=1$ and $f(X\setminus A)=\{0\}$.
\end{enumerate}
\end{lemma}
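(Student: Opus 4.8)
The plan is to recall the criterion established just above the statement: $e$ is an embedding if and only if $\mathcal{N}(x)=\mathcal{N}_{\mathcal{F}}(x)$ for every $x\in X$. Since $\mathcal{F}\subseteq C(X)$ always gives $\mathcal{N}_{\mathcal{F}}(x)\subseteq \mathcal{N}(x)$, the content of the embedding condition is the reverse inclusion $\mathcal{N}(x)\subseteq \mathcal{N}_{\mathcal{F}}(x)$ for every $x$; and by Remark \ref{filters_fr-family} (applied pointwise) this amounts to: for each $x$ and each $A\in\mathcal{N}(x)$ with $A\neq X$, there is $f\in\mathcal{F}$ and reals $s<r$ with $f(x)\le s$ and $f(y)\ge r$ for $y\in X\setminus A$. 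So I would simply argue that this is equivalent to the normalized version in statement (\ref{filters_embedding_ii}).

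First I would show (\ref{filters_embedding_ii}) $\Rightarrow$ (i). Fix $x\in X$ and $A\in\mathcal{N}(x)$ with $A\neq X$. Choose $f\in\mathcal{F}$ with $f(x)=1$ and $f(X\setminus A)=\{0\}$. Replacing $f$ by $\lvert f\rvert$ (which lies in $\mathcal{F}$) and then by $(\lvert f\rvert\wedge 1)$, we may assume $0\le f\le 1$, $f(x)=1$, $f(X\setminus A)=\{0\}$. Then $g:=1-f\in\mathcal{F}$ satisfies $g(x)=0$ and $g(X\setminus A)=\{1\}$, so $X(g,1/2)\in\mathcal{A}_x$ and $X(g,1/2)\subseteq A$; hence $A\in\mathcal{N}_{\mathcal{F}}(x)$ by Theorem \ref{filters_Nxbase}. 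Thus $\mathcal{N}(x)\subseteq\mathcal{N}_{\mathcal{F}}(x)$ for every $x$, so $e$ is an embedding.

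Conversely, for (i) $\Rightarrow$ (\ref{filters_embedding_ii}), suppose $e$ is an embedding, fix $x\in X$ and $A\in\mathcal{N}(x)$ with $A\neq X$. Then $A\in\mathcal{N}(x)=\mathcal{N}_{\mathcal{F}}(x)$, so by Theorem \ref{filters_Nxbase} there are $h\in\mathcal{F}$ and $r>0$ with $h(x)=0$ and $X(h,r)\subseteq A$. Using $\lvert h\rvert\in\mathcal{F}$, put $f=1-\bigl((\lvert h\rvert/r)\wedge 1\bigr)\in\mathcal{F}$. Then $f(x)=1$, and for $y\in X\setminus A$ we have $y\notin X(h,r)$, i.e.\ $\lvert h(y)\rvert>r$, so $f(y)=0$; hence $f(X\setminus A)=\{0\}$, which is (\ref{filters_embedding_ii}).

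The only mild subtlety is making sure all the truncated and reflected functions ($\lvert h\rvert$, $(\lvert h\rvert/r)\wedge 1$, $1-f$, etc.) genuinely lie in $\mathcal{F}$ — but this is exactly the closure of $\mathcal{F}$ under $\lvert\cdot\rvert$, $\vee$, $\wedge$, constants, and scalar multiplication that the paper has already invoked, so there is no real obstacle. The proof is essentially bookkeeping around Theorem \ref{filters_Nxbase} and Remark \ref{filters_fr-family}.
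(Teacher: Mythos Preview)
Your proof is correct and follows exactly the route the paper indicates in the paragraph preceding the lemma: reduce the embedding condition to $\mathcal{N}(x)=\mathcal{N}_{\mathcal{F}}(x)$ and then use the lattice operations to pass between the ``separated by levels'' formulation and the normalized $f(x)=1$, $f(X\setminus A)=\{0\}$ formulation. You have simply made explicit, via Theorem~\ref{filters_Nxbase} and the construction $f=1-((\lvert h\rvert/r)\wedge 1)$, what the paper compresses into a one-line reference.
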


Statements (\ref{filters_evalcomp_ii}) and
(\ref{filters_evalcomp_iii}) of the next corollary constitute the
Stone-Weierstrass Theorem.

\begin{corollary}
\label{filters_evalcomp}
Suppose that $X$ is compact and that $\mathcal{F}'$ is a conjugate
closed subalgebra of $C(X)$.
Let $\mathcal{F}$ be the closure of $\mathcal{F}'$ in $C(X)$.
The following statements are equivalent:
\begin{enumerate}[\upshape (i)]
\item\label{filters_evalcomp_i}
The evaluation mapping $e:X\rightarrow \delta X$ is a homeomorphism.
\item\label{filters_evalcomp_ii}
$\mathcal{F}'$ separates the points of $X$.
\item\label{filters_evalcomp_iii}
$\mathcal{F}=C(X)$, that is,
$\mathcal{F}'$ is dense in $C(X)$.
\end{enumerate}
\end{corollary}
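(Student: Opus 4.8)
The plan is to prove the three equivalences in a cycle, using the machinery of $\mathcal{F}$-ultrafilters and the embedding criterion of Lemma \ref{filters_embedding}. The key observation is that, since $X$ is compact Hausdorff, it is completely regular, so that for every $x\in X$ and every open neighborhood $A$ of $x$ there is a continuous function on $X$ separating $x$ from $X\setminus A$; the only question is whether such a function can be found inside $\mathcal{F}$ (equivalently, approximated from $\mathcal{F}'$). This is exactly what makes the circle close.

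First I would prove (\ref{filters_evalcomp_ii}) $\Rightarrow$ (\ref{filters_evalcomp_i}). Assume $\mathcal{F}'$ separates points of $X$. To apply Lemma \ref{filters_embedding}, fix $x\in X$ and a neighborhood $A\in\mathcal{N}(x)$ with $A\neq X$; I must produce $f\in\mathcal{F}$ with $f(x)=1$ and $f(X\setminus A)=\{0\}$. Since $X$ is compact Hausdorff and $X\setminus A^{\circ}$ is closed (hence compact), for each $y\in X\setminus A$ separation gives some $g_y\in\mathcal{F}'$ with $g_y(x)\neq g_y(y)$; replacing $g_y$ by a suitable affine combination (using that $\mathcal{F}'$ is a subalgebra containing the constants — note $\mathcal{F}$ does, and the closure of $\mathcal{F}'$ in $C(X)$ is $\mathcal{F}$, so the constants lie in $\mathcal{F}$; one checks the constants can be taken already in $\mathcal{F}'$ or simply works in $\mathcal{F}$) I may assume $g_y(x)=0$ and $g_y(y)=1$ with $0\le g_y\le 1$ after composing with $|\,\cdot\,|$ and truncating via $\wedge,\vee$. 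A standard compactness argument then yields finitely many points and, using products and the lattice operations available in $\mathcal{F}$ (recall $|f|\in\mathcal{F}$, so $f\vee g,f\wedge g\in\mathcal{F}$), a function $h\in\mathcal{F}$ with $h(x)=0$ and $h\ge 1$ on a closed neighborhood of $X\setminus A$; then $f=(1-h)\vee 0\in\mathcal{F}$ does the job. So $e$ is an embedding, and since $X$ is compact and $e(X)$ is dense in the Hausdorff space $\delta X$ (Theorem \ref{filters_compact}), $e(X)=\delta X$, i.e. $e$ is a homeomorphism.

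Next, (\ref{filters_evalcomp_i}) $\Rightarrow$ (\ref{filters_evalcomp_iii}): if $e$ is a homeomorphism, then by Theorem \ref{filters_isomorphic} the map $\Gamma\colon\mathcal{F}\to C(\delta X)$, $f\mapsto\widehat{f}$, is an isometric $\ast$-isomorphism, and composing with the homeomorphism $e$ identifies $C(\delta X)$ with $C(X)$ via $\widehat{f}\mapsto\widehat{f}\circ e=f$; hence $\mathcal{F}=C(X)$, and since $\mathcal{F}$ is by definition the closure of $\mathcal{F}'$ in $C(X)$, $\mathcal{F}'$ is dense in $C(X)$. Finally, (\ref{filters_evalcomp_iii}) $\Rightarrow$ (\ref{filters_evalcomp_ii}) is immediate: $C(X)$ separates points of the compact Hausdorff (hence completely regular, indeed normal) space $X$ by Urysohn's Lemma, and if $\mathcal{F}'$ is dense in $C(X)$ then given $x\neq y$ pick $g\in C(X)$ with $g(x)=0$, $g(y)=1$ and approximate it within $1/3$ by some $g'\in\mathcal{F}'$; then $g'(x)\neq g'(y)$, so $\mathcal{F}'$ separates points.

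The main obstacle is the construction inside (\ref{filters_evalcomp_ii}) $\Rightarrow$ (\ref{filters_evalcomp_i}): turning "separates points" into the explicit neighborhood-separating function required by Lemma \ref{filters_embedding}, using only the algebraic and lattice structure of $\mathcal{F}$ (closure under products, sums, scalars, $|\cdot|$, and hence $\vee,\wedge$) together with compactness of $X\setminus A^{\circ}$, without invoking Stone--Weierstrass — which would be circular, since this corollary is meant to \emph{yield} that theorem. The rest is bookkeeping with the isomorphism $\Gamma$ and Urysohn's Lemma.
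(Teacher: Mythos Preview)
Your proposal is correct (modulo a small fix noted below), but it takes a considerably harder route for (\ref{filters_evalcomp_ii}) $\Rightarrow$ (\ref{filters_evalcomp_i}) than the paper does. The paper simply observes that, since $X$ is compact and $e:X\to\delta X$ is continuous with dense image in the Hausdorff space $\delta X$, the map $e$ is automatically a continuous \emph{surjection}; hence $e$ is a homeomorphism if and only if it is injective, and $e$ is injective precisely when $\mathcal{F}$---equivalently, by density, $\mathcal{F}'$---separates the points of $X$. This bypasses Lemma \ref{filters_embedding} entirely and uses no lattice construction at all. Your approach instead builds the neighborhood-separating function demanded by Lemma \ref{filters_embedding}, via compactness of $X\setminus A^{\circ}$ and the lattice operations in $\mathcal{F}$; that construction is essentially the key step of the Stone--Weierstrass theorem, so while you correctly avoid \emph{citing} Stone--Weierstrass, you end up reproving its hard part by hand. (One wrinkle in your sketch: from the finite-cover step you only get $h>1/2$ on $X\setminus A$, not $h\ge 1$, so $(1-h)\vee 0$ does not vanish there; scale before truncating, e.g.\ take $f=1-(2h\wedge 1)$.) Your arguments for (\ref{filters_evalcomp_i}) $\Rightarrow$ (\ref{filters_evalcomp_iii}) and (\ref{filters_evalcomp_iii}) $\Rightarrow$ (\ref{filters_evalcomp_ii}) match the paper's.
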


\begin{proof}
Since $X$ is compact, the evaluation mapping $e:X\rightarrow \delta X$
is a continuous surjection.
Therefore, $e$ is a homeomorphism if and only if $e$ is
injective.
Since $\mathcal{F}'$ is dense in $\mathcal{F}$, statements (\ref{filters_evalcomp_i}) and
(\ref{filters_evalcomp_ii}) are equivalent.

(i) $\Rightarrow $ (iii)
Suppose that $e:X\rightarrow \delta X$ is a homeomorphism.
Then it is easy to
verify that the mapping $g\mapsto g\circ e$ from $C(\delta X)$ to
$C(X)$ is an isometric $\ast $-isomorphism.
Since $\mathcal{F}$ is isometrically $\ast $-isomorphic with $C(\delta
X)$ by Theorem \ref{filters_isomorphic}, the statement follows.

(iii) $\Rightarrow $ (ii)
This follows from Urysohn's Lemma.
\end{proof}

Next statement is a consequence of the Gelfand-Naimark Theorem.
Here, it follows from Remark \ref{filters_maxideal} and
Corollary \ref{filters_evalcomp}.

\begin{corollary}
If $X$ and $Y$ are compact Hausdorff spaces, then $X$ and $Y$ are
homeomorphic if and only if $C(X)$ and $C(Y)$
are isometrically $\ast $-isomorphic.
\end{corollary}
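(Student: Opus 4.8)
The plan is to reduce the statement to the already-established tools---the evaluation mapping, Remark \ref{filters_maxideal}, and Corollary \ref{filters_evalcomp}---exactly as the sentence preceding the corollary promises. One direction is trivial: if $F\colon X\to Y$ is a homeomorphism, then $h\mapsto h\circ F$ is an isometric $\ast$-isomorphism from $C(Y)$ onto $C(X)$, which is immediate from the definitions of the pointwise operations, the sup-norm, and complex conjugation.

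For the substantive direction, suppose $\Phi\colon C(X)\to C(Y)$ is an isometric $\ast$-isomorphism. First I would apply Corollary \ref{filters_evalcomp} with the set $X$ itself as the compact space and $\mathcal{F}'=\mathcal{F}=C(X)$: since $C(X)$ is closed and self-conjugate and separates points of the compact Hausdorff space $X$ (by Urysohn's Lemma), statement (\ref{filters_evalcomp_i}) gives that the evaluation mapping $e_X\colon X\to \delta_{C(X)}X$ is a homeomorphism; likewise $e_Y\colon Y\to\delta_{C(Y)}Y$ is a homeomorphism. So it suffices to produce a homeomorphism between the two spectrums $\delta_{C(X)}X$ and $\delta_{C(Y)}Y$, or equivalently, via Remark \ref{filters_maxideal}, between the spectrums $\Delta_{C(X)}$ and $\Delta_{C(Y)}$ of the $C^{\ast}$-algebras (spaces of non-zero multiplicative linear functionals). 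The isomorphism $\Phi$ induces the dual map $\Phi^{\ast}\colon\Delta_{C(Y)}\to\Delta_{C(X)}$, $\mu\mapsto\mu\circ\Phi$; since $\Phi$ is a bijective algebra isomorphism, $\Phi^{\ast}$ is a bijection of the spectrums, and one checks it is continuous for the weak-$\ast$ (Gelfand) topologies, hence a homeomorphism by compactness. Chasing through Remark \ref{filters_maxideal}, which identifies $\Delta_{C(X)}$ with $\delta_{C(X)}X$ via $\mu\mapsto p_\mu$ (the $C(X)$-ultrafilter generated by $\mathcal{B}(\ker\mu)$), transports $\Phi^{\ast}$ to a homeomorphism $G\colon\delta_{C(Y)}Y\to\delta_{C(X)}X$. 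Composing, $F:=e_X^{-1}\circ G\circ e_Y\colon Y\to X$ is the desired homeomorphism.

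The main obstacle---really the only non-formal point---is pinning down precisely how the Gelfand topology on $\Delta$ corresponds to the topology on $\delta X$ under the bijection of Remark \ref{filters_maxideal}, and then checking that $\Phi^{\ast}$ is continuous. Continuity of $\Phi^{\ast}$ is routine: a subbasic neighborhood of $\Phi^{\ast}(\mu)=\mu\circ\Phi$ in $\Delta_{C(X)}$ has the form $\{\nu:\lvert\nu(f)-\mu(\Phi(f))\rvert<\varepsilon\}$, and its preimage under $\Phi^{\ast}$ is $\{\mu':\lvert\mu'(\Phi(f))-\mu(\Phi(f))\rvert<\varepsilon\}$, a subbasic weak-$\ast$ neighborhood since $\Phi(f)\in C(Y)$. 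That $\Phi^{\ast}$ is a bijection is immediate because $\Phi$ is an algebra isomorphism, so $\ker$ and composition behave functorially and $(\Phi^{-1})^{\ast}$ is the inverse. Compactness of the spectrums (which is subsumed in Theorem \ref{filters_compact} after the identification) then upgrades the continuous bijection $\Phi^{\ast}$ to a homeomorphism, and the rest is bookkeeping through the two evaluation maps.

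Alternatively, and perhaps more cleanly within the paper's own framework: one may skip $\Delta$ altogether by invoking Theorem \ref{filters_denseim}. Taking $\varepsilon:=e_Y\circ(\text{nothing})$---more precisely, regard $X$ with the mapping $\mathrm{id}_X$ into the compact Hausdorff space $X$---one has $C(X)=\{h\circ\mathrm{id}_X:h\in C(X)\}$, so $\delta_{C(X)}X\cong X$; similarly for $Y$. Then the isomorphism $\Phi$ identifies the abstract $C^{\ast}$-algebras, and since $\delta X$ was shown (Theorem \ref{filters_isomorphic}) to depend only on the $\ast$-isomorphism class of $\mathcal{F}$ together with the evaluation map, the two spectrums coincide up to homeomorphism, yielding $X\cong\delta_{C(X)}X\cong\delta_{C(Y)}Y\cong Y$. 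Either route is short; I would present the $\Delta$-based argument since Remark \ref{filters_maxideal} is phrased exactly for this purpose and the sentence introducing the corollary explicitly cites it.
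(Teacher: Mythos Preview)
Your proposal is correct and follows precisely the route the paper indicates: use Corollary \ref{filters_evalcomp} to identify each compact Hausdorff space with its own $\delta$-space, use Remark \ref{filters_maxideal} to pass to the Gelfand spectrum, and transport the given $\ast$-isomorphism to a homeomorphism of spectra via the dual map. The paper itself gives no proof beyond citing these two results, so your write-up is exactly the intended fleshing-out.
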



We finish the paper with the following statement concerning locally
compact spaces.
If $X$ is locally compact,
we denote by $X_{\infty }$ the one-point compactification of $X$.
Let $e_1:X\rightarrow X_{\infty }$ denote the natural embedding.
Then it is
easy to verify that $\{ h\circ e_1 : h\in
C(X_{\infty }) \} =C_0(X)\oplus \mathbb{C}$,
where $\mathbb{C}$ denotes the constant functions on $X$.

\begin{theorem}
\label{filters_embopen}
If $X$ is non-compact and locally compact,
then the following statements are equivalent:
\begin{enumerate}[\upshape (i)]
\item\label{filters_embopen_i}
The evaluation mapping $e:X\rightarrow \delta X$ is
an embedding and $e(X)$ is open in $\delta X$.
\item $C_0(X)\subseteq \mathcal{F} $.
\item\label{filters_embopen_iii}
There exists a continuous surjection $F:\delta X\rightarrow
  X_{\infty }$ such that $F(e(x))=x$ for every $x\in X$.
\item\label{filters_embopen_iv}
The set $\varphi =\{ X\setminus K : K\subseteq X \text{ and }
  \text{cl}_X(K) \text{ is compact} \}$ is an $\mathcal{F}$-filter on
  $X$ and $\mathcal{F}$ separates the points of $X$.
\end{enumerate}
\end{theorem}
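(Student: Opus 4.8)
The strategy is to establish the cycle (i) $\Rightarrow$ (iii) $\Rightarrow$ (ii) $\Rightarrow$ (i), handling (iv) separately by showing (ii) $\Leftrightarrow$ (iv). Throughout I would use the fact, already recorded before the theorem, that $\{h\circ e_1 : h\in C(X_\infty)\}=C_0(X)\oplus\mathbb{C}$, together with Theorem \ref{filters_Fincl}, which translates inclusions of $C^\ast$-subalgebras into continuous surjections between the associated spaces of ultrafilters. Note $\delta_{C_0(X)\oplus\mathbb{C}}X\cong X_\infty$ by Theorem \ref{filters_denseim} applied to $e_1:X\to X_\infty$.

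\textbf{(i) $\Rightarrow$ (iii).} Assume $e$ is an embedding with $e(X)$ open in $\delta X$. Since $\delta X$ is compact Hausdorff (Theorem \ref{filters_compact}), $e(X)$ is a locally compact, dense, open subspace, so $\delta X\setminus e(X)$ is either empty or a single point (if it had two points, density of $e(X)$ would be contradicted by... actually one must argue: $\delta X$ is a compactification of $X$ with remainder, and $e(X)$ open means the remainder is closed; I would show the remainder is a single point by noting that any compactification with locally compact dense open image and remainder must—no, that is false in general). Let me instead argue directly: define $F:\delta X\to X_\infty$ by $F(e(x))=x$ for $x\in X$ and $F(p)=\infty$ for $p\in\delta X\setminus e(X)$. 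Continuity at points of $e(X)$ follows since $e$ is an open embedding onto an open set and $X\hookrightarrow X_\infty$ is open; continuity at remainder points follows because for a compact $K\subseteq X$, the set $e(X\setminus K)\cup(\delta X\setminus e(X))=\delta X\setminus e(K)$ is open (as $e(K)$ is compact, hence closed, in Hausdorff $\delta X$), and these sets form a neighborhood base of $\infty$. Surjectivity is clear, using $X$ non-compact so the remainder is non-empty.

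\textbf{(iii) $\Rightarrow$ (ii).} Given such $F$, I would apply Theorem \ref{filters_Fincl} with $\mathcal{F}_1=C_0(X)\oplus\mathbb{C}$ (so $\delta_1X=X_\infty$, $e_1$ the natural embedding) and $\mathcal{F}_2=\mathcal{F}$: the map $F:\delta X\to X_\infty$ with $F\circ e=e_1$ is exactly the hypothesis of the converse direction of Theorem \ref{filters_Fincl}, yielding $C_0(X)\oplus\mathbb{C}\subseteq\mathcal{F}$, hence $C_0(X)\subseteq\mathcal{F}$.

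\textbf{(ii) $\Rightarrow$ (i).} Assume $C_0(X)\subseteq\mathcal{F}$. That $e$ is an embedding: I would verify the criterion of Lemma \ref{filters_embedding}(ii). Given $x\in X$ and a neighborhood $A\neq X$ of $x$, shrink to a relatively compact open neighborhood $V$ with $\mathrm{cl}_X(V)$ compact and $\mathrm{cl}_X(V)\subseteq A$ (local compactness); Urysohn's Lemma on the locally compact Hausdorff space $X$ gives $f\in C_0(X)\subseteq\mathcal{F}$ with $f(x)=1$ and $f(X\setminus V)=\{0\}$, hence $f(X\setminus A)=\{0\}$. That $e(X)$ is open: fix $x\in X$, pick open $V$ with compact closure containing $x$, and pick $f\in C_0(X)\subseteq\mathcal{F}$ with $f(x)=1$, $f(X\setminus V)=\{0\}$. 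Then $\{x\in X: f(x)>1/2\}$ is a $\tau(\mathcal{F})$-open set $W$ with $e(x)\in\widehat{W}$ and $\widehat{W}\subseteq\overline{e(V)}$; since $\mathrm{cl}_X(V)$ is compact and $e$ is a continuous embedding (just shown), $e(\mathrm{cl}_X(V))$ is compact and contains $e(V)$, so is closed and $\overline{e(V)}=e(\mathrm{cl}_X(V))\subseteq e(X)$. Thus $e(x)\in\widehat{W}\subseteq e(X)$, proving $e(X)$ open.

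\textbf{(ii) $\Leftrightarrow$ (iv).} Note $\varphi=\{X\setminus K:\mathrm{cl}_X(K)\text{ compact}\}$ is precisely the $\mathcal{F}$-filter one obtains from the remainder: it is a filter because $X$ is non-compact, and it is generated by $\mathcal{B}(C_0(X))$ when $C_0(X)\subseteq\mathcal{F}$. For (ii) $\Rightarrow$ (iv): if $C_0(X)\subseteq\mathcal{F}$ then for each $X\setminus K\in\varphi$ with $\mathrm{cl}_X(K)$ compact, Urysohn gives $f\in C_0(X)\subseteq\mathcal{F}$ with $f(\mathrm{cl}_X(K))=\{1\}$ and $f$ small outside a relatively compact neighborhood, witnessing that $\varphi$ is an $\mathcal{F}$-family; and $C_0(X)$ separates points of $X$, so $\mathcal{F}$ does. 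For (iv) $\Rightarrow$ (ii): given $\varphi$ an $\mathcal{F}$-filter and $\mathcal{F}$ separating points, let $g\in C_0(X)$ be arbitrary and let $r>0$; the set $X(g,r)=\{x:|g(x)|\le r\}$ has complement with compact closure, so $X\setminus X(g,r)$ — wait, I want $X(g,r)\in\varphi$ or rather to show $g\in\mathcal{F}$. I would argue: $K=\{x:|g(x)|\ge r\}$ has compact closure, so $X\setminus K\in\varphi$; combined with $\varphi$ being an $\mathcal{F}$-family, for each such $K$ there is $h\in\mathcal{F}$ and $B\in\varphi$ with $h(B)=\{0\}$, $h(X\setminus(X\setminus K))=h(K)=\{1\}$. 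Using these $h$'s and the separation hypothesis with the structure of $C_0(X)$, build approximants to $g$ inside $\mathcal{F}$; more cleanly, show $\varphi$ generated by $\mathcal{B}(C_0(X)\cap\mathcal{F})$ forces $C_0(X)\cap\mathcal{F}$ to be dense in $C_0(X)$ as an ideal, and conclude via closedness of $\mathcal{F}$.

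\textbf{Main obstacle.} The delicate point is (iv) $\Rightarrow$ (ii): from the mere facts that $\varphi$ is an $\mathcal{F}$-filter and $\mathcal{F}$ separates points, one must recover all of $C_0(X)$ inside $\mathcal{F}$. I expect the cleanest route is to show $\mathcal{F}$ separates points \emph{and closed sets} from points of $X_\infty$ (the point at infinity being handled by the $\mathcal{F}$-filter condition on $\varphi$), then invoke the embedding lemma and the already-proved implications; alternatively one identifies the ideal $I=\{f\in\mathcal{F}: X(f,r)\in\varphi\ \forall r>0\}$ from Theorem \ref{filters_filtcond}, shows via Theorem \ref{filters_3cond} that the common zero set of $I$ in $\delta X$ is the remainder, and uses compactness of $\mathrm{cl}_X(K)$'s together with Urysohn to produce enough functions in $I\subseteq\mathcal{F}$ to generate $C_0(X)$. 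The bookkeeping tying the abstract remainder to the concrete filter $\varphi$ of co-(relatively-compact) sets is where care is needed.
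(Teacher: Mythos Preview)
Your cycle $(\mathrm{i})\Rightarrow(\mathrm{iii})\Rightarrow(\mathrm{ii})\Rightarrow(\mathrm{i})$ together with $(\mathrm{ii})\Leftrightarrow(\mathrm{iv})$ is a legitimate reorganisation; the paper instead runs $(\mathrm{i})\Rightarrow(\mathrm{ii})\Rightarrow(\mathrm{iii})\Rightarrow(\mathrm{iv})\Rightarrow(\mathrm{i})$. Your arguments for $(\mathrm{i})\Rightarrow(\mathrm{iii})$, $(\mathrm{iii})\Rightarrow(\mathrm{ii})$, $(\mathrm{ii})\Rightarrow(\mathrm{i})$ and $(\mathrm{ii})\Rightarrow(\mathrm{iv})$ are all correct. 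The paper's $(\mathrm{i})\Rightarrow(\mathrm{ii})$ is slightly more direct than your detour through $(\mathrm{iii})$: it simply extends $f\in C_0(X)$ to $\delta X$ by zero off $e(X)$ and checks continuity. Conversely, your explicit collapse map $\delta X\to X_\infty$ for $(\mathrm{i})\Rightarrow(\mathrm{iii})$ is cleaner than going through Theorem~\ref{filters_Fincl}.

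The substantive step, as you correctly flag, is getting out of $(\mathrm{iv})$, and the paper takes exactly your ``alternatively'' route: let $I$ be the ideal with $\mathcal{B}(I)$ generating $\varphi$ (Theorem~\ref{filters_filtcond}); then $I\subseteq C_0(X)$ is immediate, and for $C_0(X)\subseteq I$ the paper invokes the computation from Theorem~\ref{filters_3cond} (iii)$\Rightarrow$(i), producing $k\in I$ with $\lVert f-fk\rVert\le r$. Once $I=C_0(X)$, Lemma~\ref{filters_embedding} gives the embedding and $\delta X\setminus e(X)=\widehat{\varphi}$ gives openness. The point you should not gloss over --- and which the paper's one-line citation leaves implicit --- is that the computation uses $fk\in I$, hence $f\in\mathcal{F}$, which is the very thing at issue. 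This is precisely where the point-separation hypothesis in $(\mathrm{iv})$ does work: given $X(f,r)\in\varphi$, take $B\in\varphi$ and $g\in\mathcal{F}$ with $g(B)=\{0\}$, $g(X\setminus X(f,r))=\{1\}$, $g(X)\subseteq[0,1]$; on the compact set $L=\mathrm{cl}_X(X\setminus B)$ use Stone--Weierstrass (Corollary~\ref{filters_evalcomp}, available since $\mathcal{F}$ separates points of $L$) to pick $h\in\mathcal{F}$ with $\lvert f-h\rvert<r$ on $L$; then $hg\in\mathcal{F}$ and a three-region estimate gives $\lVert f-hg\rVert\le 3r$. Without separation the implication is false (take $X=(0,3)$ and $\mathcal{F}=\{f\in C(X):f(1)=f(2)\}$), so your instinct that this is the delicate spot is exactly right.
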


\begin{proof}
(i) $\Rightarrow $ (ii)
Suppose that (\ref{filters_embopen_i}) holds.
Let $f\in C_0(X)$.
Define
$F:\delta X\rightarrow \mathbb{C} $ by 
\[
F(p)=\begin{cases}
f(x) & \text{if } p=e(x) \text{ for some } x\in X, \\
0 & \text{otherwise}.
\end{cases}
\]
It is easy to verify that $F$ is continuous, and so $f\in \mathcal{F}$
by Corollary \ref{filters_isomorphic}.

(ii) $\Rightarrow $ (iii)
This follows from Theorems \ref{filters_Fincl} and
\ref{filters_contdenseim} and the remark preceding this theorem.

(iii) $\Rightarrow $ (iv)
Since $X$ is not compact, the set $\varphi $ is a filter on $X$.
Suppose now that (\ref{filters_embopen_iii}) holds.
Clearly, $\mathcal{F}$ separates the points of $X$.
To see that $\varphi $ is an $\mathcal{F}$-filter on $X$,
let $A\in \varphi $.
Pick a subset $K$ of $X$ such that $A=X\setminus K$ and
$\text{cl}_X(K)$ is compact.
Pick an open subset $U$ of $X$ such that $K\subseteq U$ and
$\text{cl}_X(U)$ is compact.
By Urysohn's Lemma, there exists a function $h\in
C(X_{\infty })$ such that $h(\text{cl}_X(K))=\{1\}$ and
$h(X_{\infty }\setminus U)=\{0\}$.
By assumption and Corollary \ref{filters_isomorphic},
there exists a function $f\in \mathcal{F}$ such that
$\widehat{f}=h\circ F$.
Since $F(e(x))=x$ for every $x\in X$,
we have $f(\text{cl}_X(K))=\{1\}$ and $f(X\setminus U)=\{0\}$.
Since $X\setminus A\subseteq \text{cl}_X(K)$ and $X\setminus U\in
\varphi $, the statement follows.

(iv) $\Rightarrow $ (i)
Suppose that (\ref{filters_embopen_iv}) holds.
By Theorem \ref{filters_filtcond}, there exists an ideal $I$ of
$\mathcal{F}$ such that $\varphi $ is generated by $\mathcal{B}(I)$.
If $f\in I$, then $X(f,r)\in \varphi $ for every $r>0$,
and so we must have $f\in C_0(X)$.
On the other hand, if $f\in C_0(X)$, then $X(f,r)\in \varphi
$ for every $r>0$.
The proof of implication (\ref{filters_3cond_iii}) $\Rightarrow $
(\ref{filters_3cond_i}) in the proof of Theorem \ref{filters_3cond}
applies to show that $f\in I$.
Therefore, $I=C_0(X)$, and so the evaluation mapping is an
embedding by Lemma \ref{filters_embedding} (see \cite[p. 85]{Alipr}).

To finish the proof, it is enough to show that $\delta X\setminus
e(X)=\widehat{\varphi }$.
Since $X$ is locally compact,
the inclusion $\widehat{\varphi }\subseteq \delta X\setminus e(X)$ is
obvious.
To verify the reverse inclusion, let $p\in \delta X\setminus e(X)$
and let $K$ be a compact subset of $X$.
Since $e$ is continuous, we have $p\in \delta X\setminus \overline{e(K)}$,
and so $X\setminus K\in p$ by Lemma \ref{filters_properties} (\ref{filters_properties_i}).
Therefore, $\varphi \subseteq p$, as required.
\end{proof}



\begin{thebibliography}{99}
%
%
\bibitem{ufilters}
Alaste, T.:
$\mathcal{U}$-filters and uniform compactification.
Studia Math. 211, 215--229 (2012).

\bibitem{Alipr}
Aliprantis, C.D., Burkinshaw, O.:
Principles of Real Analysis, Third Edition.
Academic Press, Inc., San Diego (1998).

\bibitem{wapfilt}
Berglund, J.F., Hindman, N.:
Filters and the weak almost periodic compactification of a discrete semigroup.
Trans. Amer. Math. Soc. 284, 1--38 (1984).

\bibitem{Comfort}
Comfort, W., Negrepontis, S.:
The Theory of Ultrafilters.
Springer--Verlag, New York (1974).

\bibitem{Folland}
Folland, G.B.:
A Course in Abstract Harmonic Analysis.
CRC Press, Boca Raton, FL (1995).

\bibitem{GJ}
Gillman, L., Jerison, M.:
Rings of Continuous Functions.
Springer--Verlag, New York (1976).

\bibitem{HindStra}
Hindman, N., Strauss, D.:
Algebra in the {Stone--{\v C}ech} Compactification.
Walter de Gruyter \& Co., Berlin (1998).

\bibitem{near}
Ko\c{c}ak, M., Strauss, D.:
Near ultrafilters and compactifications.
Semigroup Forum 55, 94--109 (1997).

\bibitem{toot1}
Tootkaboni, M.A., Riazi, A.:
Ultrafilters on semitopological semigroups.
Semigroup Forum 70, 317--328 (2005).

\bibitem{Walker}
Walker, R.C.:
Stone-\v Cech Compactification.
Springer-Verlag, New York (1974).

\bibitem{Willard}
Willard, S.:
Genaral Topology.
Addison--Wesley Publishing Co.,
Reading (1970).
\end{thebibliography}


\end{document}